\documentclass[a4paper,10pt,psamsfonts]{amsart}


\usepackage[utf8]{inputenc} 

\usepackage{amsmath}
\usepackage{amstext}
\usepackage{amsfonts}
\usepackage{amsthm}
\usepackage{amssymb}
\usepackage[bf,SL,BF]{subfigure}
\usepackage{graphicx}
\usepackage{caption}
\usepackage{subfigure}
\usepackage{subfloat}
\usepackage{algorithm}
\usepackage[noend]{algpseudocode}
\usepackage{listings}
\lstset{
  basicstyle=\footnotesize,
  frame = single,
  numberstyle = \tiny,
  numbers = left,
  numbersep =8pt,
  xleftmargin=.25in,
  xrightmargin=.25in
  }

\usepackage{hyperref}
\hypersetup{colorlinks}

\usepackage{color}

\usepackage{changes}




\newcommand{\R}{{\mathbb{R}}}

\newcommand{\E}{{\mathbb{E}}}
\newcommand{\N}{{\mathbb{N}}}
\newcommand{\D}{{\mathcal{D}}}
\newcommand{\F}{{\mathcal{F}}} 
\renewcommand{\P}{{\mathbb{P}}} 

\newcommand{\B}{{\mathcal{B}}}

\newcommand{\diff}[1]{\,\mathrm{d}#1}

\newcommand{\one}{\mathbb{I}}

\newcommand{\triple}{{\vert\kern-0.25ex\vert\kern-0.25ex\vert}}

\DeclareMathOperator*{\esssup}{ess\,sup}

\theoremstyle{plain}

\newtheorem{definition}{Definition}[section]
\newtheorem{theorem}[definition]{Theorem}
\newtheorem{lemma}[definition]{Lemma}
\newtheorem{corollary}[definition]{Corollary}

\newtheorem{assumption}[definition]{Assumption}

\theoremstyle{definition}
\newtheorem{remark}[definition]{Remark}

\begin{document}

\title[Randomized Quadrature for the Finite Element Method]
{Application of Randomized Quadrature Formulas\\
to the Finite Element Method\\
for Elliptic Equations\\
}

\author[R.~Kruse]{Raphael Kruse}
\address{Raphael Kruse\\
Institut f\"ur Mathematik\\
Technische Universit\"at Berlin\\
Stra\ss e des 17.~Juni 136\\
DE-10623 Berlin\\
Germany}
\email{kruse@math.tu-berlin.de}

\author[N.~Polydorides]{Nick Polydorides}
\address{Nick Polydorides\\
School of Engineering\\
University of Edinburgh\\
Edinburgh, EH9 3FB\\
UK;
The Alan Turing Institute, London, UK}
\email{n.polydorides@ed.ac.uk}

\author[Y.~Wu]{Yue Wu}
\address{Yue Wu\\
Mathematical Institute\\
University of Oxford\\
Oxford, OX2 6GG\\
UK}
\email{yue.wu@maths.ox.ac.uk}

\keywords{finite element method, Monte Carlo method, quadrature, elliptic
equations} 
\subjclass[2010]{65C05, 65D32, 65N15, 65N30} 

\begin{abstract}
  The implementation of the finite element method for linear elliptic equations
  requires to assemble the stiffness matrix and the load vector. In general,
  the entries of this matrix-vector system are not known explicitly but need to
  be approximated by quadrature rules. If the coefficient functions
  of the differential operator or the forcing term are irregular, then standard
  quadrature formulas, such as the barycentric quadrature rule, may not be 
  reliable. In this paper we investigate the application of two randomized
  quadrature formulas to the finite element method for such 
  elliptic boundary value problems with irregular coefficient functions. 
  We give a detailed error analysis of these methods, discuss their
  implementation, and demonstrate their capabilities in several numerical
  experiments.  
\end{abstract}

\maketitle


\section{Introduction}
\label{sec:intro}

Let $\D \subset \R^2$ be a convex, bounded, and polygonal domain. We consider a
linear elliptic boundary value problem of the following form:
Find a mapping $u \colon \D \to \R$ such that 
\begin{align}
  \label{eq:BVP}
  \begin{cases}
    -\mathrm{div}\big( \sigma \nabla u \big) = f,&
    \text{ in } \D,\\
    u = 0, & \text{ on } \partial \D,
  \end{cases}
\end{align}
where $\sigma, f \colon \D \to \R$ are given coefficient functions with
$\sigma(x) \ge \sigma_0 > 0$ for all $x \in \D$. 
Provided $\sigma$ is globally bounded and $f$ is square-integrable,
it is well-known that \eqref{eq:BVP} admits a unique solution $u \in H^1_0(\D)$
in the weak sense satisfying
\begin{align}
  \label{eq:weak_sol}
  \int_\D \sigma(x) \nabla u(x) \cdot \nabla v(x) \diff{x} 
  = \int_\D f(x) v(x) \diff{x} 
\end{align}
for all $v \in H^1_0(\D)$. Here, we denote by $H^1_0(\D)$ the Sobolev space of
weakly differentiable and square-integrable functions which (in some sense)
satisfy the homogeneous Dirichlet boundary condition. 
In Section~\ref{sec:notation} we provide more details on
the function spaces used throughout this paper.
We also refer, for instance, to
\cite[Chapters~8--9]{brezis2011} or \cite[Chapter~6]{evans2010} for an
introduction to the variational formulation of elliptic boundary value
problems of the form \eqref{eq:BVP}.  

Elliptic equations such as \eqref{eq:BVP} appear in many applications, e.g., in
mechanical engineering and physics. It is also an intensively studied problem to
introduce the Galerkin finite element method as found in many text books in
numerical analysis, e.g. \cite{brenner2008, larson2013, larsson2009,
strang1973}. In the same spirit, we use \eqref{eq:BVP} as a model problem
to demonstrate the applicability of randomized quadrature formulas to the finite
element method. 

To this end, we consider a family $(\mathcal{T}_h)_{h \in (0,1]}$ of finite  
subdivisions of the polygonal domain $\D \subset \R^2$ into 
triangles.
Hereby, the parameter $h \in (0,1]$ denotes the maximal
edge length of the elements in $\mathcal{T}_h$. 
For every partition $\mathcal{T}_h$ we define
$S_h \subset H^1_0(\D)$ as the associated 
finite element space consisting of piecewise linear functions.

Then, we obtain an approximation of
the exact solution to the boundary value problem \eqref{eq:BVP}  
by solving the following finite dimensional problem:
For $h \in (0,1]$ find $u_h \in S_h$ satisfying
\begin{align}
  \label{eq:Galerkin}
  \int_\D \sigma(x) \nabla u_h(x) \cdot \nabla v_h(x) \diff{x}
  = \int_\D f(x) v_h(x) \diff{x} 
\end{align}
for all $v_h \in S_h$.
For the practical computation of the approximation $u_h \in S_h$,
it is then convenient to rewrite \eqref{eq:Galerkin} as a
system of linear equations. More precisely,
let $(\varphi_j)_{j = 1}^{N_h}$ be a basis of $S_h$,
where $N_h = \dim(S_h)$ denotes the number of degrees of freedom.
Then, we have the representation
\begin{align*}
  u_h = \sum_{j = 1}^{N_h} u_j \varphi_j,
\end{align*}
where the entries of the vector $\mathbf{u} = [u_1,\ldots,u_{N_h}]^{\top}
\in\R^{N_h}$ are yet to be determined. After inserting this 
representation of $u_h$ into the finite dimensional problem 
\eqref{eq:Galerkin} and by testing with all basis functions
$(\varphi_j)_{j = 1}^{N_h}$ we arrive at a system of linear equations. In
matrix-vector form this system is written as 
\begin{align}
  \label{eq:matvec}
  A_h \mathbf{u} = f_h,
\end{align}
where the \emph{stiffness matrix} $A_h \in \R^{N_h \times N_h}$ is given by 
\begin{align*}
  [A_h]_{i,j} = \int_\D \sigma(x) \nabla \varphi_i(x) \cdot \nabla \varphi_j
  (x) \diff{x} 
\end{align*}
for all $i,j \in \{1,\ldots,N_h\}$. Moreover, the \emph{load vector} $f_h \in
\R^{N_h}$ has the entries
\begin{align}
  \label{eq:load}
  [f_h]_i = \int_\D f(x) \varphi_i(x) \diff{x}, \quad i \in \{1,\ldots, N_h\}.
\end{align}
If, on the one hand, the entries of $A_h$ and $f_h$ are known explicitly, it is
straight-forward to use standard solvers for the linear system
\eqref{eq:matvec} in order to determine $\mathbf{u} \in \R^{N_h}$ and, hence,
$u_h \in S_h$ numerically. For instance, we refer to the monograph
\cite{hackbusch2016} for an overview of suitable solvers. 

On the other hand, for general $\sigma \in L^\infty(\D)$ and $f \in L^2(\D)$,
the entries of the stiffness matrix and the load vector are often not
computable explicitly. Such irregular coefficients often appear in problems in
uncertainty quantification to model incomplete knowledge of the problem
parameters. See \cite{barth2018} and the references therein.
In the literature, the reader is
advised to approximate the entries by suitable quadrature formulas.
For instance, we refer to \cite[Section~5.6]{larsson2009} and
\cite[Section~4.3]{strang1973}.

However, standard methods for numerical integration,
such as the trapezoidal sum, require point evaluations of the coefficient
functions $\sigma$ and $f$. Therefore, these quadrature formulas
are, in general, only applicable if additional smoothness requirements, such as
continuity, are imposed on $\sigma$ and $f$.   
The purpose of this paper is to show that this problem can be circumvented if 
we approximate the entries of $A_h$ and $f_h$ by \emph{randomized quadrature
formulas}. As it will turn out, these quadrature formulas do not require the
continuity of $f$ and $\sigma$.

Before we give a more detailed outline of the content of this paper, 
let us mention that we consider randomized quadrature formulas of a form 
that has originally been introduced by S.~Haber in \cite{haber1966,
haber1967,haber1969}. His important observation was that the accuracy of the
standard Monte Carlo method can be increased drastically, if the random
sampling points are distributed more evenly over the integration domain. More
precisely, he proposed to place the random sampling points in disjoint 
subdomains whose volumes decay asymptotically with the number of samples. 
If the integrand possesses more regularity than being merely square-integrable
this approach reduces the variance of the randomized quadrature formula
significantly. In particular, one often observes an higher order of convergence
compared to standard Monte Carlo estimators or purely deterministic methods. 
For more details on this line of arguments we also refer to the proof of
Lemma~\ref{lem:QMC} further below. Moreover, related results are found in
\cite{cambanis1992, masry1990}. 

More recently, it has been shown that such randomized quadrature formulas
are also applicable to the numerical approximation of ordinary differential
equations with time-irregular coefficient functions. We refer, for instance, 
to \cite{daun2011, heinrich2008, jentzen2009, kruse2017, stengle1990,
stengle1995} for results on randomized one-step methods. Further, 
these methods have also been applied for the \emph{temporal
discretization} of evolution equations in infinite dimensions, 
see \cite{eisenmann2017, hofmanova2017}, and of stochastic differential
equations, see \cite{kruse2017b, przybylowicz2014}.

Besides \cite{heinrich2006}, where the information based complexity of
randomized algorithms for elliptic partial differential equations has been
investigated, it appears that the application of randomized quadrature formulas
to the \emph{spatial discretization} of boundary value problems is not
well-studied yet. 

In this paper, we first consider a stratified Monte Carlo
estimator in the spirit of \cite{haber1966}. More precisely,
the estimator defined in \eqref{eq:MC} below, is based on an admissible
triangulation $\mathcal{T}_h$ of $\D$ and exactly one uniformly distributed 
random point on each triangle of the triangulation.
We show in Section~\ref{sec:general} that this estimator gives
approximations of the entries in the stiffness matrix and the load vector,
which are convergent at least with order $1$ with respect to the
root-mean-square norm. Under slightly increased 
regularity assumptions, such as $f \in L^p(\D)$ with $p \in (2,\infty]$ and
$\sigma \in W^{s,q}(\D)$ with $s \in (0,1]$, $q \in (2,\infty]$,
we also show that the resulting randomized finite element solution $u_h^{MC}$
converges to the exact solution $u \in H^1_0(\D)$. The precise error estimate
is given in Theorem~\ref{thm:errorH1}.

In Section~\ref{sec:importanceSampling}, we propose an
importance sampling estimator for the approximation of the load vector.
Hereby, the random points are placed according to a non-uniform distribution,
whose probability density function is proportional to the basis functions of
the finite element space.  
The section also contains a detailed analysis of the error with respect
to the norms in $L^2(\D)$ and $H^1_0(\D)$, where we purely focus on the
associated finite element problem for the Poisson equation \eqref{eq:Poisson},
i.e. Equation~\eqref{eq:BVP} with $\sigma \equiv 1$. These results are
stated in Theorem~\ref{thm:errorISH1} and Theorem~\ref{thm:errorISL2}.

In Section~\ref{sec:sampling} we discuss the implementation of the 
randomized quadrature formulas. Essentially, this is achieved by a
transformation to a reference triangle, typically the $2$-simplex, and a
general rejection algorithm. Finally, we report on some numerical experiments 
in Section~\ref{sec:numexp}. 


\section{Notation and preliminaries}
\label{sec:notation}

In this section, we fix some notation and introduce several 
function spaces, which are used throughout this paper.
We also revisit the variational formulation of the boundary value
problem~\eqref{eq:BVP} and its approximation by the finite element method.
The section also contains a brief overview of some terminology from
probability. 

By $\N$ we denote the set of all positive integers, while $\N_0 := \N \cup
\{0\}$. As usual, the set $\R$ consists of all real numbers. By $| \cdot |$ we
denote the Euclidean norm on the Euclidean space $\R^d$ for any $d \in \N$. In
particular, if $d = 1$ then $| \cdot |$ coincides with taking
the absolute value.

Throughout this paper we often use $C$ as a generic constant, which may
vary from appearance to appearance. However, $C$ is not allowed to depend on
numerical parameters such as $h \in (0,1]$. 

Next, let us introduce some function spaces. Throughout this paper, we assume
that $\D \subset \R^2$ 
is a bounded, convex and polygonal domain.  
By $L^p(\D)$, $p \in [1,\infty]$, we denote the Banach 
space of (equivalence classes of) $p$-fold Lebesgue integrable functions, which
is endowed with the norm 
\begin{align*}
  \| f \|_{L^p(\D)} &= \Big( \int_\D |f(x)|^p \diff{x}
  \Big)^{\frac{1}{p}} \quad \text{ for } p \in [1,\infty),\\
  \| f \|_{L^\infty(\D)} &= \esssup_{x \in \D} |f(x)|.
\end{align*}
As it is customary, we do not distinguish notationally between functions and
their equivalence classes.

An important example of an element in $L^p(\D)$ for any value of $p \in
[1,\infty]$ is the indicator function of a measurable set $B \subseteq \D$
denoted by $\one_B$. This function fulfills $\one_B(x) = 1$ if $x \in B$, else
$\one_B(x)=0$.

Moreover, we denote by $W^{k,p}(\D) \subset L^p(\D)$, $p \in [1,\infty]$, $k
\in \N$, the Sobolev space with differentiation index $k$. To be more precise,
$W^{k,p}(\D)$ consists of all $p$-fold integrable functions that are $k$-times
partially differentiable in the weak sense and whose derivatives are also
$p$-fold integrable. If $W^{k,p}(\D)$ is endowed with the norm
\begin{align*}
  \| f \|_{W^{k,p}(\D)} &= \Big( \sum_{ \alpha \in \N_0^2, |\alpha|\le
  k} \| \partial^\alpha f \|_{L^p(\D)}^p \Big)^{\frac{1}{p}} \quad \text{ for
  } p \in [1,\infty),\\
  \| f \|_{W^{k,\infty}(\D)} &= \sum_{ \alpha \in \N_0^2, |\alpha|\le
  k} \| \partial^\alpha f \|_{L^\infty(\D)},
\end{align*}
then it is also a Banach space.
Here we make use of the standard multi-index notation for partial derivatives,
that is, for $\alpha \in \N_0^2$ we define $|\alpha| = \alpha_1 + \alpha_2$
and 
\begin{align*}
  \partial^\alpha f := \frac{\partial^{|\alpha|}}{\partial_{x_1}^{\alpha_1}
  \partial_{x_2}^{\alpha_2}} f.
\end{align*}
Further, if $p = 2$ then $L^2(\D)$ and $H^k(\D) := W^{k,2}(\D)$ are Hilbert
spaces.  
The inner products are denoted by $(\cdot, \cdot)_{L^2(\D)}$ 
and $(\cdot, \cdot)_{H^k(\D)}$, respectively.

In order to incorporate homogeneous Dirichlet boundary conditions, we also
introduce the space $H^1_0(\D)$, which is defined as the closure of the set of
all infinitely often differentiable functions with compact support in $\D$ with
respect to the norm in $H^1(\D)$, that is
\begin{align*}
  H^1_0(\D) := \overline{ C_{c}^\infty(\D)}^{\|\cdot\|_{H^1(\D)}}.
\end{align*}
It is well-known that the standard $H^1(\D)$-norm and the semi-norm
\begin{align*}
  | f |_{H^1(\D)} = \Big( \sum_{ i = 1}^2 \Big\| \frac{\partial}{\partial x_i}
  f \Big\|_{L^2(\D)}^2 \Big)^{\frac{1}{2}}
  = \Big( \int_\D | \nabla f |^2 \diff{x} \Big)^{\frac{1}{2}}
\end{align*}
are equivalent on $H^1_0(\D)$. In particular, the space
$(H^1_0(\D),|\cdot|_{H^1(\D)}, (\cdot,\cdot)_{H^1_0(\D)})$ is a separable
Hilbert space. For a detailed introduction to Sobolev spaces we refer the
reader, for instance, to \cite[Chapter~5]{evans2010}. 

For a domain $\D \subset \R^2$, $p \in [1,\infty)$, and $s \in (0,1)$
the Sobolev--Slobodeckij norm $\| \cdot\|_{W^{s,p}(\D)}$ 
is given by
\begin{align}
  \label{eq:fracSobol}
  \| f \|_{W^{s,p}(\D)} 
  = \Big( \| f \|^p_{L^p(\D)} + \int_{\D} \int_{\D} \frac{|f(x_1) -
  f(x_2) |^p}{|x_1 - x_2|^{2 + s p}} \diff{x_2} \diff{x_1}
  \Big)^{\frac{1}{p}}.
\end{align}
Then, the fractional order Sobolev space $W^{s,p}(\D)$ consists of all $f \in
L^p(\D)$ satisfying $\| f \|_{W^{s,p}(\D)}< \infty$. 
By $|\cdot|_{W^{s,p}(\D)}$ we denote the corresponding semi-norm, which only
consists of the double integral part in \eqref{eq:fracSobol}.
Further details on these spaces are found in \cite{dinezza2012}. 

Next, we revisit the variational formulation of the boundary
value problem \eqref{eq:BVP}.  If $\sigma \in L^\infty(\D)$, $\sigma(x) \ge
\sigma_0 > 0$ for almost every $x \in \D$, and $f \in L^2(\D)$, then it is 
well-known that the bilinear form $a \colon H^1_0(\D) 
\times H^1_0(\D) \to \R$ and the linear functional $F \colon H^1_0(\D) \to \R$
given by 
\begin{align}
  \label{eq:a_2d}
  a(u,v) &:= \int_\D \sigma(x) \nabla u(x) \cdot \nabla v(x) \diff{x},\\
  \label{eq:F_2d}
  F(v) &:= \int_\D f(x) v(x) \diff{x}
\end{align}
for all $u, v \in H^1_0(\D)$ are well-defined. Moreover, $a$ is strongly
positive and bounded, that is, it holds
\begin{align}
  \label{eq:a_pos}
  a(v,v) &\ge \sigma_0 |v|_{H^1(\D)}^2,\\
  \label{eq:a_bdd}
  |a (u,v) | &\le \|\sigma\|_{L^\infty(\D)} |u|_{H^1(\D)} |v|_{H^1(\D)}
\end{align}
for all $u, v \in H^1_0(\D)$.
Further, $F$ is a bounded linear functional.

Therefore, the lemma of Lax--Milgram, cf. \cite[Chapter~6]{evans2010}, is
applicable and ensures the existence of a unique weak solution $u \in
H^1_0(\D)$ satisfying 
\begin{align}
  \label{eq:varprob2d}
  a(u,v) = F(v) \quad \text{for all } v \in H^1_0(\D).
\end{align}
Observe that \eqref{eq:varprob2d} coincides with \eqref{eq:weak_sol}.

For the error analysis in Section~\ref{sec:general} and Section
\ref{sec:importanceSampling}, it will be necessary to 
impose the following additional regularity condition on the
exact solution.

\begin{assumption}
  \label{as:reg}
  The variational problem \eqref{eq:varprob2d} has a uniquely determined
  strong solution, i.e., the unique weak solution $u$ to
  \eqref{eq:varprob2d} is an element of $H^1_0(\D) \cap H^2(\D)$.
\end{assumption}

We refer, for instance, to \cite[Theorem~3.2.1.2]{grisvard2011}, which gives
sufficient conditions for the existence of a strong solution. For example,
if $\D$ is a convex, bounded and open subset of $\R^2$ and if $\sigma \in
L^\infty(\D)$ has a globally Lipschitz continuous extension on $\overline{\D}$,
then Assumption~\ref{as:reg} is satisfied for every $f \in L^2(\D)$.

Next, we briefly review the finite element method for problem \eqref{eq:BVP}.
To this end, let $(\mathcal{T}_h)_{h \in (0,1]}$ be a family of
admissible triangulations of $\D$. More precisely, for every $h \in (0,1]$ it
holds that each triangle $T \in \mathcal{T}_h$ is an open subset of $\D$
satisfying
\begin{align*}
  \bigcup_{T \in \mathcal{T}_{h}} \overline{T} &= \overline{\D}
  \quad \text{ and } T \cap T' = \emptyset, \quad \text{for all } T,T' \in
  \mathcal{T}_h, T \neq T'.
\end{align*}
Further, it is assumed that no vertex of any triangle lies in the
interior of an edge of any other triangle of the triangulation, cf.
\cite[Definition~3.3.11]{brenner2008}. Typically, the parameter $h \in (0,1]$
denotes the maximal edge length of all triangles in $\mathcal{T}_h$.
Moreover, the area of a triangle $T$ is denoted by $|T|$.

As usual, we define the finite element space $S_h$ associated to a
triangulation $\mathcal{T}_h$, $h \in (0,1]$, by
\begin{align*}
  S_h = \{ v_h \in C(\overline{\D})\, : \, v_h = 0 \, \text{ on } \partial
  \D, \, v_h|_{T} \in \Pi_1 \, \forall 
  T \in \mathcal{T}_h\}. 
\end{align*}
Hereby, the set $\Pi_1$ consists of all polynomials up to degree $1$.
The finite element space $S_h$ is finite dimensional and $N_h = \dim(S_h)$
is called the \emph{number of degrees of freedom}. It coincides with the
number of interior nodes $(z_i)_{i = 1}^{N_h}$ 
of the triangulation. By $(\varphi_j)_{j = 1}^{N_h} \subset
S_h$ we denote the standard Lagrange basis of $S_h$ determined by
$\varphi_j(z_i) = \delta_{i,j}$ for all $i,j = 1,\ldots,N_h$. 
Further details on the construction of finite element spaces are found,
e.g., in \cite[Chapter~3]{brenner2008} or \cite[Chapter~5]{larsson2009}. 

For the error analysis in Section~\ref{sec:general} and Section
\ref{sec:importanceSampling} we have to impose the following additional
condition on the family of triangulations.

\begin{assumption}
  \label{as:triangulation}
  We assume that $(\mathcal{T}_h)_{h \in (0,1]}$ is a family of
  admissible and quasi-uniform triangulations. In particular, the interior
  angles of each triangle in $\mathcal{T}_h$ are bounded from below by a
  positive constant,   independently of $h$. In addition, there exists $c \in
  (0,\infty)$ such that for every $h \in (0,1]$ and $T \in \mathcal{T}_h$ it
  holds that $|T| \ge c h^2$. 
\end{assumption}

The assumption enables us to make use of a maximum norm estimate for functions
from the finite element space $S_h$, which we cite from
\cite[Lemma~6.4]{thomee2006}:
If Assumption~\ref{as:triangulation} is satisfied then there exists $C \in
(0,\infty)$, independently of $h \in (0,1]$, such that
\begin{align}
  \label{eq:maxnorm}
  \| v_h \|_{L^\infty(\D)} \le C \ell_h^{\frac{1}{2}} | v_h |_{H^1(\D)} 
\end{align}
for every $v_h \in S_h$, where $\ell_h = \max(1, \log(1/h))$.

Further, we recall that for a quasi-uniform family of triangulations 
the following inverse estimate is satisfied
\begin{align}
  \label{eq:inverse}
  | v_h |_{H^1(\D)} \le C h^{-1} \| v_h \|_{L^2(\D)}
\end{align}
for every $v_h \in S_h$, where $C$ is independent of the triangulation
$\mathcal{T}_h$. For a proof of \eqref{eq:inverse}
we refer to \cite[Section~4.5]{brenner2008}.

Next, we introduce the \emph{Ritz projector} $R_h \colon {H}^{1}_0(\D) \to S_h$
as the orthogonal projector onto $S_h$ with respect to the bilinear form $a$.
To be more precise, as a consequence of the lemma of Lax--Milgram,
for each $v \in {H}^{1}_0(\D)$ there exists a unique element
$R_h v \in S_h$ fulfilling
\begin{align}
  \label{eq:Ritz}
  a(R_h v, v_h) = a(v,v_h) \quad \text{ for all } v_h \in S_h.
\end{align}
Note that $R_h \colon H^1_0(\D) \to S_h$ is a  bounded linear operator.
In addition, there exists $C \in (0,\infty)$ such that for every $h \in (0,1]$
and $v \in H^1_0(\D) \cap H^2(\D)$ it holds
\begin{align}
  \label{eq:RitzH1}
  | (R_h - I) v|_{H^1(\D)} &\le C h \|v\|_{H^2(\D)},\\
  \label{eq:RitzL2}
  \| (R_h - I)v \|_{L^2(\D)} &\leq C h^2 \|v\|_{H^2(\D)}.
\end{align}
A proof is found, for instance, in \cite[Theorem~5.5]{larsson2009}.

For the introduction and the error analysis of Monte Carlo methods, we
also require some fundamental concepts from probability and stochastic
analysis. For a general introduction readers are referred to standard
monographs on this topic, for instance \cite{kallenberg2002, klenke2014}.  
For the measure theoretical background see also \cite{bauer2001, cohn2013}.

First, let us recall that a \emph{probability space}
$(\Omega,\mathcal{F},\P)$ consists of a measurable space $(\Omega,\mathcal{F})$
endowed with a finite 
measure $\P$ satisfying $\P(\Omega) = 1$. The value $\P(A) \in [0,1]$ is
interpreted as the \emph{probability} of the \emph{event} $A \in \F$.
A mapping $X \colon \Omega \to \R^d$, $d \in \N$,
is called a \emph{random variable} if $X$ is $\F /
\mathcal{B}(\R^d)$-measurable, where $\mathcal{B}(\R^d)$ denotes the
Borel-$\sigma$-algebra generated by the set of all open subsets of $\R^d$.
More precisely, it holds true that
\begin{align*}
  X^{-1}(B)= \big\{ \omega \in \Omega\, : \, X(\omega)\in B \big\} \in
  \mathcal{F} 
\end{align*}
for all $B \in \mathcal{B}(\R^d)$. Every random variable induces a probability
measure on its image space. In fact, the measure $\P_X \colon
\mathcal{B}(\R^d) \to [0,1]$ given by $\P_X(B)=\P(X^{-1}(B))$ for all
$B \in \mathcal{B}(\R^d)$ is a probability measure on the measurable space
$(\R^d, \mathcal{B}(\R^d))$. Usually, $\P_X$ is called the \emph{distribution}
of $X$. 

If the distribution $\P_X$ of $X$ is absolutely continuous with respect to
the Lebesgue measure, then there exists a measurable, non-negative mapping $g_X
\colon \R^d \to \R$ with 
\begin{align*}
  \P_X(B) =  \P( X^{-1}(B) ) = \int_B g_X(x) \diff{x}
\end{align*}
for every $B \in \mathcal{B}(\R^d)$. 
The mapping $g_X$ is called the \emph{probability density function} of $X$ and 
we write $X \sim g_X(x) \diff{x}$.

Next, let us recall that a random variable $X \colon \Omega \to \R^d$ is
called \emph{integrable} if $\int_{\Omega} |X(\omega)|
\diff{\P(\omega)}<\infty$. 
Then, the \emph{expectation} of $X$ is defined as
$$\E[X]:=\int_{\Omega}X(\omega)\diff{\P(\omega)} = \int_{\R^d} x
\diff{\P_{X}(x)}.$$ 
We say that $X$ is centered if $\E[X] = 0$.

Moreover, we write $X \in L^p(\Omega;\R^d)$ with $p \in [1,\infty)$ if 
$\int_{\Omega}|X(\omega)|^p \diff{\P(\omega)}<\infty$. If $d=1$, 
then we simply write $L^p(\Omega) := L^p(\Omega;\R)$.
In addition, the set
$L^p(\Omega;\R^d)$ becomes a Banach space if we identify all random
variables that only differ on a set of measure zero (i.e. probability zero)
and if we endow $L^p(\Omega;\R^d)$ with the norm
\begin{align*}
  \| X \|_{L^p(\Omega;\R^d)} = \big( \E \big[ |X|^p \big]
  \big)^{\frac{1}{p}} = \Big( \int_{\Omega} |X(\omega) |^p
  \diff{\P(\omega)} \Big)^{\frac{1}{p}}.
\end{align*}

In Section \ref{sec:general}, we frequently encounter a family of
$\mathcal{U}(T)$-distributed random variables $(Z_T)_{T \in \mathcal{T}_h}$. 
This means that for each $T \in \mathcal{T}$ the  mapping $Z_T \colon \Omega
\to \R^2$ is a random variable that is \emph{uniformly distributed} on the
triangle $T$. More precisely, the distribution
$\P_{Z_T}$ of $Z_T$ is given by $\P_{Z_T}(A) = \frac{|A \cap T|}{|T|}$ for
every $A \in \B(\R^2)$. Moreover, it follows from the 
transformation theorem that the expectation of $v \circ Z_T$ 
for an arbitrary function $v\in L^1(\D)$ is given by
$$\E[v(Z_T)]= \int_{T} v(z)\frac{1}{|T|} \diff{z} = \int_{\D} v(z)
\frac{1}{|T|} \one_T(z) \diff{z},$$
where the mapping $g_{Z_T}(z) = \frac{1}{|T|} \one_{T}(z)$, $z \in \D \subset
\R^2$,
is the probability density function of $Z_T$. 

Further, we say that a family of $\R^d$-valued random variables $(X_n)_{n \in
\N}$ is \emph{independent} if for any finite subset $M \subset \N$ and for
arbitrary events 
$(A_m)_{m \in M} \subset \mathcal{B}(\R^d)$ we have the multiplication rule
\begin{align*}
  \P \Big( \bigcap_{m \in M} \{ \omega \in \Omega\, : \, X_m(\omega) \in A_m \}
  \Big) = \prod_{m \in M} \P \big( \{ \omega \in \Omega\, : \, X_m(\omega) \in
  A_m \} \big).
\end{align*}
On the level of distributions this basically means
that the joint distribution of each finite subfamily $(X_m)_{m \in M}$ is equal
to the product measure of the single distributions. This directly implies
the multiplication rule for the expectation
\begin{align}
  \label{eq:prod_ind}
  \E\Big[ \prod_{m \in M} X_m \Big] = \prod_{m \in M} \E \big[ X_m \big],
\end{align}
provided $X_m$ is integrable for each $m \in M$.

Finally, let us mention that we often encounter random variables taking
values in a function space instead of $\R^d$. For instance, in
Theorem~\ref{thm:existence} we construct a random variable with values in
$S_h \subset H^1_0(\D)$. Since $S_h$ is finite dimensional all notions for
$\R^d$-valued random variables carry over to this case in a straight-forward
way. However, we often use the norm of the Bochner space
$L^p(\Omega;V)$ with either $V = H^1_0(\D)$ or $V=L^2(\D)$, which is given by 
\begin{align*}
  \| X \|_{L^p(\Omega;V)} = \big( \E \big[ \|X\|^p_V \big]
  \big)^{\frac{1}{p}} = \Big( \int_{\Omega}\|X(\omega)\|^p_V
  \diff{\P(\omega)}
  \Big)^{\frac{1}{p}}
\end{align*}
for $p \in [1,\infty)$. For an introduction to Bochner spaces we refer
to \cite[Appendix~E]{cohn2013}.



%
%

\section{A randomized quadrature formula on a triangulation}
\label{sec:general}

As already mentioned in the introduction, quadrature rules are often used
for the assembly of the matrix-vector system \eqref{eq:matvec}
associated to the finite element method for \eqref{eq:varprob2d}.
In this section, we introduce a randomized quadrature formula,
which is linked to the underlying triangulation $\mathcal{T}_h$ 
of the finite element space $S_h$. We discuss the well-posedness of the
resulting method and derive error estimates in a similar way as for
deterministic quadrature rules shown in \cite[Section~5.6]{larsson2009}. 

Let $\mathcal{T}_h$, $h \in (0,1]$, be an admissible triangulation of $\D$.
For a given $v \in L^1(\D)$, we consider the following Monte Carlo estimator
\begin{align}
  \label{eq:MC}
  Q_{MC}[v] := \sum_{T \in \mathcal{T}_h} |T| v( Z_T), 
\end{align}
where we sum over all triangles of the triangulation $\mathcal{T}_h$. Hereby, 
$(Z_T)_{T \in \mathcal{T}_h}$ denotes an independent family of random variables
such that for each triangle $T \in \mathcal{T}_h$ the random variable 
$Z_T$ is uniformly distributed on $T$, that is $Z_T \sim \mathcal{U}(T)$.
We discuss the simulation of $Z_T$ and the implementation of $Q_{MC}$ in
Subsection~\ref{sec:uniform}. 

Observe that the randomized quadrature rule is independent of the considered 
equivalence class of $v \in L^1(\D)$. If $v(x) = \tilde{v}(x)$ for almost every
$x \in \D$, then it follows that $Q_{MC}[v] = Q_{MC}[\tilde{v}]$ with
probability one.

\begin{lemma}
  \label{lem:QMC}
  Let $\mathcal{T}_h$ be an admissible triangulation with maximal edge length
  $h \in (0,1]$. Then, the random quadrature rule $Q_{MC}$ is unbiased, i.e.,
  for every $v \in L^1(\D)$ it holds
  \begin{align*}
    \E \big[ Q_{MC}[v] \big] = \int_\D v(x) \diff{x}.
  \end{align*}
  Moreover, if $v \in L^2(\D)$ then it holds that
  \begin{align*}
    \E \Big[  \Big| \int_\D v(x) \diff{x} - Q_{MC}[v] \Big|^2 \Big]
    \le \frac{\sqrt{3}}{2} h^2 \| v \|_{L^2(\D)}^2.
  \end{align*}
  In addition, if $v \in W^{s,2}(\D)$ for some $s \in (0,1)$ 
  then it follows that 
  \begin{align*}
    \E \Big[  \Big| \int_\D v(x) \diff{x} - Q_{MC}[v] \Big|^2 \Big]
    \le h^{2 + 2s} | v |_{W^{s,2}(\D)}^2.
  \end{align*}
\end{lemma}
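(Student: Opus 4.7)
The plan is to exploit the independence of the sampling points to decompose the quadrature error into a sum of independent centred terms, one per triangle, and then bound each per-triangle variance by a quantity that scales correctly with the regularity of $v$.

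First I would establish the unbiasedness by a direct application of the transformation theorem recalled in the preliminaries: for each fixed $T \in \mathcal{T}_h$ the random variable $Z_T$ has density $\frac{1}{|T|}\one_T$, so
\begin{equation*}
  \E\big[|T|\,v(Z_T)\big] = |T|\cdot \frac{1}{|T|}\int_T v(x)\diff{x} = \int_T v(x)\diff{x},
\end{equation*}
and summing over $T \in \mathcal{T}_h$ yields $\E[Q_{MC}[v]] = \int_\D v(x)\diff{x}$.

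Next, to prove both $L^2$-type estimates I would introduce the centred random variables $X_T := \int_T v(x)\diff{x} - |T|\,v(Z_T)$, so that $\int_\D v\diff{x} - Q_{MC}[v] = \sum_{T \in \mathcal{T}_h} X_T$. By Step~1 each $X_T$ is centred, and the independence of the family $(Z_T)_{T \in \mathcal{T}_h}$ together with \eqref{eq:prod_ind} reduces the cross terms to zero, giving
\begin{equation*}
  \E\Big[\Big|\int_\D v\diff{x} - Q_{MC}[v]\Big|^2\Big] = \sum_{T\in\mathcal{T}_h}\E\big[X_T^2\big] = \sum_{T\in\mathcal{T}_h}|T|^2\,\mathrm{Var}\big(v(Z_T)\big).
\end{equation*}
To bound the variance uniformly in the regularity of $v$, I would use the classical identity via an independent copy $Z_T'\sim \mathcal{U}(T)$ of $Z_T$, which gives
\begin{equation*}
  |T|^2\,\mathrm{Var}\big(v(Z_T)\big) = \tfrac{1}{2}\int_T\int_T \big(v(x)-v(y)\big)^2\diff{x}\diff{y}.
\end{equation*}

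From this common representation the two bounds follow by different estimates of the pointwise increments. For $v \in L^2(\D)$ I would apply the elementary inequality $(v(x)-v(y))^2 \le 2(v(x)^2 + v(y)^2)$, which bounds the double integral over $T\times T$ by $4|T|\int_T v^2\diff{x}$; combining this with the sharp geometric bound $|T|\le \tfrac{\sqrt 3}{4}h^2$ (valid since $T$ is a triangle all of whose sides have length at most $h$) and summing over $T$ produces the stated factor $\tfrac{\sqrt 3}{2}h^2\|v\|_{L^2(\D)}^2$. For $v \in W^{s,2}(\D)$ I would instead multiply and divide by $|x-y|^{2+2s}$, use $|x-y|\le h$ for $x,y\in T$ to pull out $h^{2+2s}$, and sum over $T$ to recover the full Sobolev--Slobodeckij semi-norm \eqref{eq:fracSobol}.

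The main technical point — rather than an obstacle — is checking that the pointwise inequality $|x-y|\le h$ can be used inside the integral on each $T$, which is immediate from the definition of $h$ as the maximal edge length. A minor subtlety worth flagging is that $v$ is only an equivalence class in $L^p$; however, since $Z_T$ has a bounded Lebesgue density, $v(Z_T)$ is well-defined up to a $\P$-null set, so $Q_{MC}[v]$ is independent of the chosen representative, as noted just before the lemma.
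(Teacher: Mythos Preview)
Your proposal is correct and follows essentially the same route as the paper: unbiasedness by direct computation, decomposition of the mean-square error into a sum of per-triangle variances via independence, reduction to the double integral $\int_T\int_T (v(x)-v(y))^2\diff{x}\diff{y}$, then Weitzenb\"ock's bound $|T|\le \tfrac{\sqrt{3}}{4}h^2$ for the $L^2$ case and $|x-y|\le h$ for the fractional Sobolev case. The only cosmetic difference is that the paper reaches the double integral via the Cauchy--Schwarz inequality applied to $\big|\tfrac{1}{|T|}\int_T v - v(z)\big|^2$, whereas you use the exact independent-copy variance identity (which carries an extra factor $\tfrac{1}{2}$ and would in fact yield a slightly sharper constant in the $W^{s,2}$ estimate); both arrive at the stated bounds.
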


\begin{proof}
  Due to $Z_T \sim \frac{1}{|T|} \one_T(z) \diff{z}$ for every $T \in
  \mathcal{T}_h$ we have
  \begin{align*}
    \E \big[ |T| v(Z_T) \big]
    = |T| \int_\D v(z) \frac{1}{|T|} \one_T(z) \diff{z}= \int_T v(z) \diff{z}.
  \end{align*}
  Then the first assertion follows by summing over all triangles of the
  triangulation.

  Now, let $v \in L^2(\D)$ be arbitrary. Then, the mean-square error is equal
  to 
  \begin{align*}
    \E \Big[ \Big|  \int_\D v(x) \diff{x} - Q_{MC}[v] \Big|^2 \Big]
    &= \E \Big[ \Big| \sum_{T \in \mathcal{T}_h} \Big( 
    \int_T v(x) \diff{x} - |T| v(Z_T) \Big) \Big|^2 \Big]\\
    &= \sum_{T \in \mathcal{T}_h} \E \Big[ \Big|
    \int_T v(x) \diff{x} - |T| v(Z_T) \Big|^2 \Big]
  \end{align*}
  since the summands are independent and centered random variables. 
  Therefore, they are orthogonal with respect to the $L^2(\Omega)$-inner
  product as can easily be deduced from \eqref{eq:prod_ind}.
  
  Next, for every $T \in \mathcal{T}_h$ we make use
  of $Z_T \sim \frac{1}{|T|} \one_T(z) \diff{z}$ and the Cauchy--Schwarz
  inequality. This yields
  \begin{align}
    \label{eq:err1}
    \E \Big[ \Big| \int_T v(x) \diff{x} - |T| v(Z_T) \Big|^2 \Big]
    &= |T|^2 \E \Big[ \Big| \frac{1}{|T|} \int_T v(x) \diff{x} - v(Z_T) \Big|^2
    \Big] \notag\\
    &= |T| \int_{T} \Big| \frac{1}{|T|} \int_T v(x) \diff{x} - v(z) \Big|^2
    \diff{z}\\
    &\le \int_{T} \int_{T} \big| v(x) - v(z) \big|^2 \diff{x} \diff{z}.
    \notag
  \end{align}
  Then, since $v \in L^2(\D)$ we get
  \begin{align*}
    \int_{T} \int_{T} \big| v(x) - v(z) \big|^2 \diff{x} \diff{z}
    &= \int_T \int_T \big( v(x)^2 - 2 v(x) v(z) + v(z)^2 \big) \diff{x}
    \diff{z}\\
    &= 2 |T| \int_{T} \big| v(x) \big|^2 \diff{x} - 
    2 \Big(\int_T v(x) \diff{x} \Big)^2\\
    &\le 2 |T| \int_{T} \big| v(x) \big|^2 \diff{x}. 
  \end{align*}
  Then, we recall Weitzenb\"ock's inequality \cite{weitzenboeck1919},
  which yields an upper bound for the area $|T|$ of a triangle $T \in
  \mathcal{T}_h$ with maximal edge length $h$. More precisely, it holds
  \begin{align}
    \label{eq:Weitzenb}
    |T| \le \frac{\sqrt{3}}{4} h^2.
  \end{align}
  Hence, after summing over all triangles we obtain
  \begin{align*}
    \Big\| \int_\D v(x) \diff{x} - Q_{MC}[v] \Big\|_{L^2(\Omega)}^2
    \le 2 \sum_{T \in \mathcal{T}_h} |T| \int_{T} \big| v(x) \big|^2 \diff{x}
    \le \frac{\sqrt{3}}{2} h^2 \| v \|^2_{L^2(\D)}.
  \end{align*}
  This proves the second claim.

  Finally, let $v \in W^{s,2}(\D)$, $s \in (0,1)$. 
  The estimate in \eqref{eq:err1} is then continued by
  \begin{align*}
    \E \Big[ \Big| \int_T v(x) \diff{x} - |T| v(Z_T) \Big|^2 \Big]
    &\le \int_{T} \int_{T} \big| v(x) - v(z) \big|^2 \diff{x} \diff{z}\\
    &\le h^{2 + 2s} \int_{T} \int_{T} \frac{ \big| v(x) - v(z)
    \big|^2}{|x - z|^{2 + 2 s}} \diff{x} \diff{z}\\
    &= h^{2 + 2s} | v |_{W^{s,2}(T)}^2
  \end{align*}
  since $|x - z| \le h$ for all $x,y \in T$. After summing over all
  triangles we directly obtain the third assertion. 
\end{proof}

Next, we apply the randomized quadrature formula \eqref{eq:MC} 
for the approximation of the bilinear form $a$ and the linear form $F$
defined in \eqref{eq:a_2d} and \eqref{eq:F_2d}. From this we obtain
two randomized mappings $a_{MC} \colon  S_h \times S_h  \to
L^\infty(\Omega)$ and $F_{MC} \colon S_h \to L^2(\Omega)$ which are given by 
\begin{align}
  \label{eq:aMC}
  a_{MC}(v_h,w_h) := Q_{MC}[\sigma \nabla v_h \cdot \nabla w_h] 
  = \sum_{T \in \mathcal{T}_h} |T| \sigma(Z_T) \nabla v_h(Z_T) \cdot \nabla
  w_h(Z_T) 
\end{align}
and
\begin{align}
  \label{eq:FMC}
  F_{MC}(v_h) := Q_{MC}[f v_h] = 
  \sum_{T \in \mathcal{T}_h} |T| f(Z_T) v_h(Z_T) 
\end{align}
for all $v_h, w_h \in S_h$. In passing, we observe that $a_{MC}(v_h,w_h) =
a(v_h,w_h)$ if $\sigma \equiv c \in ( 0,\infty)$ in $\D$. This holds true since
the gradients of $v_h, w_h \in S_h$ are constant on each triangle.

The next lemma answers the question of
well-posedness of $a_{MC}$ and $F_{MC}$ and contains some additional
properties.  

\begin{lemma}
  \label{lem:wellposedness}
  Let $(\mathcal{T}_h)_{h \in (0,1]}$ be a family of admissible triangulations
  of $\D$. Assume that $\sigma \in L^\infty(\D)$ satisfies $\sigma(x) \ge
  \sigma_0 > 0$ for almost every $x \in \D$. Then, the 
  mapping $a_{MC}$ introduced in \eqref{eq:aMC} is well-defined for every $h
  \in (0,1]$. Moreover, it holds $\P$-almost surely that
  \begin{align*}
    |a_{MC}(v_h,w_h)| &\le \| \sigma \|_{L^\infty(\D)} | v_h |_{H^1(\D)}
    |w_h|_{H^1(\D)},\\
    a_{MC}(v_h, v_h) &\ge \sigma_0 |v_h|_{H^1(\D)}^2  
  \end{align*}
  for all $v_h, w_h \in S_h$.

  In addition, if $f \in L^2(\D)$ and the family of triangulations satisfies 
  Assumption~\ref{as:triangulation} then the mapping $F_{MC}$ defined in
  \eqref{eq:FMC} is also well-defined and there exists $C \in (0,\infty)$
  independent of $\mathcal{T}_h$ with
  \begin{align*}
    |F_{MC}(v_h)| &\le C \ell_h^{\frac{1}{2}}
    Q_{MC}[|f|] | v_h |_{H^1(\D)} < \infty \quad \text{$\P$-a.s.},\\
    \| F_{MC}(v_h) \|_{L^2(\Omega)} &\le C \|f \|_{L^2(\D)} |v_h|_{H^1(\D)}    
  \end{align*}
  for all $v_h \in S_h$, where $\ell_h = \max( 1 , \log(1/h))$.
\end{lemma}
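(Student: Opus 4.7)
The plan is to exploit two structural features. First, since $S_h$ consists of piecewise linear functions, the gradients $\nabla v_h$ and $\nabla w_h$ are constant on each triangle, which reduces the randomness in $a_{MC}(v_h,w_h)$ to the single factor $\sigma(Z_T)$. Second, for $F_{MC}$ the point evaluation $v_h(Z_T)$ is handled by the maximum norm estimate \eqref{eq:maxnorm} for the almost-sure bound, while the $L^2(\Omega)$-bound additionally invokes the variance estimate from Lemma~\ref{lem:QMC}.

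For the bilinear form, fix $T \in \mathcal{T}_h$ and observe that on $T$ both $\nabla v_h$ and $\nabla w_h$ are constant vectors, so $\int_T |\nabla v_h|^2 \diff{x}$ equals $|T|$ times the squared Euclidean norm of $\nabla v_h$ restricted to $T$. Since $Z_T$ has a Lebesgue density on $T$ and $\sigma_0 \le \sigma(x) \le \|\sigma\|_{L^\infty(\D)}$ for almost every $x \in \D$, the sandwich $\sigma_0 \le \sigma(Z_T) \le \|\sigma\|_{L^\infty(\D)}$ holds $\P$-almost surely. Hence each summand in $a_{MC}(v_h,w_h)$ is bounded $\P$-a.s.~by $\|\sigma\|_{L^\infty(\D)} \bigl(\int_T |\nabla v_h|^2 \diff{x}\bigr)^{1/2} \bigl(\int_T |\nabla w_h|^2 \diff{x}\bigr)^{1/2}$, and a Cauchy--Schwarz summation over the triangulation yields the upper bound. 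The coercivity inequality follows by setting $w_h = v_h$, inserting $\sigma(Z_T) \ge \sigma_0$, and recognizing the resulting sum as $|v_h|_{H^1(\D)}^2$.

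For the load functional, the $\P$-almost sure bound follows from the pointwise estimate $|F_{MC}(v_h)| \le \|v_h\|_{L^\infty(\D)}\, Q_{MC}[|f|]$ combined with \eqref{eq:maxnorm}; its finiteness is immediate because Lemma~\ref{lem:QMC} gives $\E\bigl[Q_{MC}[|f|]\bigr] = \|f\|_{L^1(\D)} \le |\D|^{1/2}\|f\|_{L^2(\D)} < \infty$, so $Q_{MC}[|f|] < \infty$ $\P$-a.s. For the $L^2(\Omega)$-bound, decompose $F_{MC}(v_h) = F(v_h) + \bigl(F_{MC}(v_h) - F(v_h)\bigr)$ and apply the triangle inequality. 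The deterministic term is controlled by Cauchy--Schwarz and the Poincaré inequality on $H^1_0(\D)$ as $|F(v_h)| \le C\|f\|_{L^2(\D)}|v_h|_{H^1(\D)}$. For the centered stochastic part, apply Lemma~\ref{lem:QMC} to $fv_h \in L^2(\D)$, then use $\|fv_h\|_{L^2(\D)} \le \|f\|_{L^2(\D)}\|v_h\|_{L^\infty(\D)}$, and once again \eqref{eq:maxnorm}, to obtain a bound of the form $C h \ell_h^{1/2} \|f\|_{L^2(\D)}|v_h|_{H^1(\D)}$.

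The main point is then that $h^2 \ell_h$ is uniformly bounded on $(0,1]$ (since $h^2 \log(1/h) \to 0$ as $h \downarrow 0$), so the logarithmic factor introduced by \eqref{eq:maxnorm} is absorbed by the $h^2$ coming from the quadrature variance. This cancellation is what keeps the final constant in the $L^2(\Omega)$-estimate independent of $h$ and $\ell_h$, and is the only subtle point in an otherwise routine computation.
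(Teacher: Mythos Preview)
Your proposal is correct and follows essentially the same approach as the paper: piecewise constancy of gradients plus $\sigma_0 \le \sigma(Z_T) \le \|\sigma\|_{L^\infty(\D)}$ for $a_{MC}$, and the maximum norm estimate \eqref{eq:maxnorm} together with the variance bound from Lemma~\ref{lem:QMC} and the observation $\sup_{h\in(0,1]} h^2\ell_h < \infty$ for $F_{MC}$. The only cosmetic differences are that the paper establishes $Q_{MC}[|f|]<\infty$ $\P$-a.s.\ via a second-moment bound rather than your first-moment bound, and uses the orthogonal (bias--variance) decomposition $\|F_{MC}(v_h)\|_{L^2(\Omega)}^2 = \|F_{MC}(v_h)-F(v_h)\|_{L^2(\Omega)}^2 + |F(v_h)|^2$ in place of your triangle inequality; neither difference affects the argument.
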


\begin{proof}
  We first show that $a_{MC}(v_h, w_h) \in L^\infty(\Omega)$ for every $v_h,
  w_h \in S_h$. 
  To see this, we recall that the functions in $S_h$ are linear on
  each triangle $T$ in $\mathcal{T}_h$. This implies that
  the gradient $\nabla v_h$ is piecewise constant for every $v_h \in S_h$.
  Hence, the random variables $\nabla v_h(Z_T)$, $T \in \mathcal{T}_h$,
  are, in fact, constant with
  probability one. This implies that
  \begin{align*}
    |T| |\nabla v_h(Z_T)|^2 = \int_T |\nabla v_h(x)|^2 \diff{x}\quad
    \text{$\P$-almost surely.}
  \end{align*}
  Together with the assumption $\sigma \in L^\infty(\D)$ it therefore follows
  that the summands in \eqref{eq:aMC} are essentially bounded
  random variables. More 
  precisely, it holds $\P$-almost surely that
  \begin{align*}
    |a_{MC}(v_h, w_h)| &\le \sum_{T \in \mathcal{T}_h} |T| \sigma(Z_T) 
    | \nabla v_h(Z_T)| |\nabla w_h(Z_T)| \\
    &\le \|\sigma\|_{L^\infty(\D)}  
    \sum_{T \in \mathcal{T}_h} |T| | \nabla v_h(Z_T)| |\nabla w_h(Z_T)|\\
    &\le \|\sigma\|_{L^\infty(\D)} \Big( \sum_{T \in \mathcal{T}_h} |T|
    | \nabla v_h(Z_T)|^2 \Big)^{\frac{1}{2}}
    \Big( \sum_{T \in \mathcal{T}_h} |T|
    | \nabla w_h(Z_T)|^2 \Big)^{\frac{1}{2}}\\
    &= \|\sigma\|_{L^\infty(\D)} |v_h|_{H^1(\D)} |w_h|_{H^1(\D)}
  \end{align*}
  for all $v_h, w_h \in S_h$.

  Moreover, the same arguments yield for every $v_h \in S_h$
  \begin{align*}
    a_{MC}(v_h, v_h) = \sum_{T \in \mathcal{T}_h} |T| \sigma(Z_T) 
    | \nabla v_h(Z_T)|^2 \ge \sigma_0 |v_h|^2_{H^1(\D)}
    \quad \text{$\P$-almost surely,}
  \end{align*}
  since $\sigma(Z_T) \ge \sigma_0 > 0$ almost surely.

  Next, we turn to the mapping $F_{MC}$. 
  From \eqref{eq:maxnorm} it follows for $v_h \in S_h$ that
  \begin{align*}
    |F_{MC}(v_h)| &\le \sum_{T \in \mathcal{T}_h} |T| |f(Z_T)| |v_h(Z_T)|
    \le \| v_h \|_{L^\infty(\D)} Q_{MC}[|f|]\\
    &\le C \ell_h^{\frac{1}{2}}
    Q_{MC}[|f|] | v_h |_{H^1(\D)}.
  \end{align*}
  Observe that the bound on the right-hand side still contains a random
  quadrature formula and is, therefore, itself random. However,
  for $f \in L^2(\D)$ it follows from applications of the Cauchy--Schwarz
  inequality and Lemma~\ref{lem:QMC} that
  \begin{align*}
    \E \big[ ( Q_{MC}[|f|])^2 \big]
    &= \E \Big[ \Big( \sum_{T \in \mathcal{T}_h} |T| |f(Z_T)| \Big)^2 \Big]\\
    &\le |\D| \E \Big[ \sum_{T \in \mathcal{T}_h} |T| |f(Z_T)|^2  \Big]\\
    &= |\D| \int_\D |f(z)|^2 \diff{z}.
  \end{align*}
  In particular, we have that $Q_{MC}[|f|] < \infty$ with probability one.
  This also proves that $F_{MC}(v_h) \in L^2(\Omega)$. It remains to prove the
  asserted estimate of the $L^2(\Omega)$-norm of $F_{MC}(v_h)$. For this we
  first observe that
  \begin{align*}
    \| F_{MC}(v_h) \|^2_{L^2(\Omega)} = 
    \big\| F_{MC}(v_h) - \E [ F_{MC}(v_h) ] \big\|^2_{L^2(\Omega)}
    + \big( \E [ F_{MC}(v_h) ] \big)^2
  \end{align*}
  for every $v_h \in S_h$. 
  From Lemma~\ref{lem:QMC}, the Cauchy--Schwarz inequality, and the Poincar\'e
  inequality on $H^1_0(\D)$ it follows that
  \begin{align*}
    \big( \E [ F_{MC}(v_h)] \big)^2 &= \Big( \int_{\D} f(x) v_h(x) \diff{x}
    \Big)^2\\
    &\le \int_{\D} |f(x)|^2 \diff{x} \int_{\D} |v_h(x)|^2 \diff{x} \le C \| f
    \|_{L^2(\D)}^2 | v_h |_{H^1(\D)}^2,
  \end{align*}
  where the constant $C$ only depends on $\D$.
  An application of Lemma~\ref{lem:QMC} then yields
  \begin{align*}
    \E \big[ \big| F_{MC}(v_h) - \E [ F_{MC}(v_h) ] \big|^2 \big]
    &= \E \Big[ \Big| Q_{MC}[f v_h] - \int_\D f(x) v_h(x) \diff{x} \Big|^2
    \Big]\\
    &\le \frac{\sqrt{3}}{2} h^2 \| f v_h \|_{L^2(\D)}^2\\
    &\le \frac{\sqrt{3}}{2} h^2 \|f \|_{L^2(\D)}^2 \|v_h\|_{L^\infty(\D)}^2\\
    &\le C \frac{\sqrt{3}}{2} h^2 \ell_h \|f \|_{L^2(\D)}^2
    |v_h|_{H^1(\Omega)}^2,
  \end{align*}
  where we also applied the maximum norm estimate \eqref{eq:maxnorm}.
  Hence, after taking note of $\sup_{h \in (0,1]} h^2 \ell_h =
  \sup_{h \in (0,1]} h^2 \max(1,\log(1/h)) < \infty$ the proof is completed.
\end{proof}

Next, we introduce the finite element problem based on the randomized
quadrature rule. In terms of $a_{MC}$ and $F_{MC}$ the problem is stated as
follows:
\begin{align}
  \label{eq:discProb}
  \begin{cases}
    \text{Find } u_h^{MC} \colon \Omega \to S_h \text{ such that $\P$-almost
    surely} \\
    a_{MC}(u_h^{MC},v_h) = F_{MC}(v_h) \text{ for all } v_h \in S_h.
  \end{cases}
\end{align}

\begin{theorem}
  \label{thm:existence}
  Suppose that $f \in L^2(\D)$ and $\sigma \in L^\infty(\D)$ with $\sigma(x)
  \ge \sigma_0 > 0$ for almost every $x \in \D$ are given. Then,
  for every admissible triangulation $\mathcal{T}_h$, $h \in (0,1]$, there
  exists a uniquely determined solution $u_h^{MC} \colon \Omega \to
  S_h$ to the discrete problem \eqref{eq:discProb}. In addition,
  there exists $C \in (0,\infty)$ independent of $\mathcal{T}_h$ such that
  \begin{align*}
    |u_h^{MC}|_{H^1(\D)} 
    &\le C \ell_h^{\frac{1}{2}} Q_{MC}[|f|] \quad \text{$\P$-a.s.,}
  \end{align*}
  where $\ell_h = \max(1,\log(1/h))$.
\end{theorem}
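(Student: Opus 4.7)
The plan is to apply the Lax--Milgram lemma pathwise on $S_h$, using the coercivity and boundedness established in Lemma~\ref{lem:wellposedness}, and then derive the a priori bound by testing \eqref{eq:discProb} against the solution itself.

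First I would fix $\omega \in \Omega$ in a set of full $\P$-measure on which the conclusions of Lemma~\ref{lem:wellposedness} hold, so that $a_{MC}(\cdot,\cdot)(\omega)$ is a bilinear form on the finite-dimensional Hilbert space $(S_h,|\cdot|_{H^1(\D)})$ that is bounded by $\|\sigma\|_{L^\infty(\D)}$ and coercive with constant $\sigma_0>0$. Similarly, $F_{MC}(\cdot)(\omega)$ is a linear functional on $S_h$, and its boundedness follows from the pathwise estimate $|F_{MC}(v_h)(\omega)| \le C \ell_h^{1/2} Q_{MC}[|f|](\omega) |v_h|_{H^1(\D)}$ in Lemma~\ref{lem:wellposedness}, where $Q_{MC}[|f|](\omega)$ is finite $\P$-almost surely. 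Lax--Milgram (or, equivalently, the invertibility of the associated stiffness matrix, which is symmetric positive definite by coercivity) then gives a unique $u_h^{MC}(\omega) \in S_h$ satisfying \eqref{eq:discProb}.

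For measurability, I would expand $u_h^{MC} = \sum_{j=1}^{N_h} u_j \varphi_j$ and note that the coefficient vector $\mathbf{u}(\omega) \in \R^{N_h}$ is obtained by inverting the random stiffness matrix $[A_h^{MC}(\omega)]_{i,j} = a_{MC}(\varphi_j,\varphi_i)(\omega)$, whose entries are Borel measurable functions of the random points $(Z_T)_{T \in \mathcal{T}_h}$. Since matrix inversion is continuous on the open set of invertible matrices and the inversion is well-defined almost surely, $\mathbf{u}$ is an $\R^{N_h}$-valued random variable, and hence so is $u_h^{MC}$ as an $S_h$-valued map.

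For the $H^1$-bound, I would test \eqref{eq:discProb} with $v_h = u_h^{MC}$ and combine the coercivity bound from Lemma~\ref{lem:wellposedness} with the pathwise bound on $F_{MC}$, namely
\begin{align*}
  \sigma_0 |u_h^{MC}|_{H^1(\D)}^2
  \le a_{MC}(u_h^{MC},u_h^{MC})
  = F_{MC}(u_h^{MC})
  \le C \ell_h^{\frac{1}{2}} Q_{MC}[|f|] \, |u_h^{MC}|_{H^1(\D)}
\end{align*}
$\P$-almost surely. Dividing by $|u_h^{MC}|_{H^1(\D)}$ (treating the trivial case $u_h^{MC}=0$ separately) and absorbing $\sigma_0^{-1}$ into the constant yields the asserted estimate.

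I expect the main technical point to be the measurability argument, since the rest is a direct pathwise application of standard finite-element theory combined with Lemma~\ref{lem:wellposedness}; however, this is handled cleanly by reducing the problem to a random linear system whose coefficients are continuous functions of the uniform samples $Z_T$.
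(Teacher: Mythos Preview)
Your proposal is correct and follows essentially the same route as the paper: a pathwise application of Lax--Milgram on $S_h$ using the coercivity and boundedness of $a_{MC}$ and the almost-sure bound on $F_{MC}$ from Lemma~\ref{lem:wellposedness}, followed by testing with the solution to obtain the stability estimate.

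The only genuine methodological difference lies in the measurability step. The paper appeals to an abstract result (\cite[Lemma~4.3]{eisenmann2017}) applied to the map $g(v,\omega) = [a_{MC}(\sum_i v_i\psi_i,\psi_j)(\omega) - F_{MC}(\psi_j)(\omega)]_{j}$, whereas you argue directly that the coefficient vector of $u_h^{MC}$ is obtained by inverting the random stiffness matrix, whose entries are measurable, and that matrix inversion is continuous on invertible matrices. Your argument is more elementary and fully self-contained; the paper's has the advantage of generalizing verbatim to nonlinear variants where the solution is not given by a linear solve. Both are valid here. One small point worth making explicit in your write-up is that the exceptional null set on which Lemma~\ref{lem:wellposedness} may fail can be chosen \emph{independently} of $v_h,w_h\in S_h$ (the paper spells this out, using that $\nabla v_h$ is piecewise constant), so that coercivity and boundedness hold simultaneously for all test functions on a single full-measure set.
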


\begin{proof}
  It follows from Lemma~\ref{lem:wellposedness} that the bilinear form
  $a_{MC}$ is $\P$-almost surely strictly positive and bounded. Moreover, an
  inspection of the proof reveals that the exceptional set $N_1 \subset \Omega$
  of probability zero, where these properties might be violated, can be chosen
  independently of the arguments $v_h, w_h \in S_h$. This is true since
  only the gradients of $v_h$ and $w_h$ appear in $a_{MC}(v_h,w_h)$, which are
  piecewise constant on each triangle. Hence, on the set $\{ Z_T \in T\} \in
  \F$, which has probability one, the randomness only occurs in the coefficient
  function $\sigma$.  
  Therefore, for every $\omega \in \Omega \setminus N_1$ the mapping
  $S_h \times S_h \ni (v_h,w_h) \mapsto a_{MC}(v_h,w_h)(\omega) \in \R$
  satisfies the conditions of the lemma of Lax--Milgram.

  In the same way, there exists a measurable set $N_2 \subset \Omega$ of
  probability zero such that the mapping $S_h \ni v_h \mapsto F_{MC}(v_h)(\omega)
  \in \R$ is a bounded linear functional on $H^1_0(\D)$ for all
  $\omega \in \Omega\setminus N_2$. In particular, we observe
  that the exceptional set $N_2$ can again be chosen independently
  of the mapping $v_h$ due to the continuity of all elements in $S_h$. In
  addition, the following estimate, which was used in the proof of
  Lemma~\ref{lem:wellposedness}, is true for all $\omega \in \Omega$:
  \begin{align*}
    |v_h(Z_T(\omega))| \le \| v_h \|_{L^\infty(\D)}.
  \end{align*}
  Consequently, for every fixed $\omega \in \Omega \setminus (N_1 \cup N_2)$
  the lemma of Lax--Milgram uniquely determines an element $u_h^{MC}(\omega) \in
  S_h$ satisfying 
  \begin{align}
    \label{eq:discProb2}
    a_{MC}(u_h^{MC}(\omega), v_h)(\omega) = F_{MC}(v_h)(\omega)
    \quad \text{for all } v_h \in S_h.
  \end{align}
  Let us define $u_h^{MC}(\omega) = 0 \in S_h$ for all $\omega \in N_1 \cup N_2$. 
  Next, we have to prove that the mapping $\Omega \ni \omega \mapsto
  u_h^{MC}(\omega) \in S_h$ is measurable. However, this follows from an
  application of \cite[Lemma~4.3]{eisenmann2017} to the mapping $g \colon
  \Omega \times \R^{N_h} \to \R^{N_h}$ defined by $g(v ,\omega) := [
  a_{MC}( \sum_{i = 1}^{N_h} v_i \psi_i, \psi_j)(\omega) -
  F_{MC}(\psi_j)]_{j = 1}^{N_h}$, where $v = [v_i]_{i = 1}^{N_h} \in
  \R^{N_h}$ and $(\psi_j)_{j = 1}^{N_h} \subset S_h$ is an arbitrary basis
  of the finite dimensional space $S_h$.
  
  It remains to prove the stability estimate. Due to
  Lemma~\ref{lem:wellposedness} and \eqref{eq:discProb2}
  it holds on $\Omega \setminus (N_1 \cup N_2)$ that
  \begin{align*}
    \sigma_0 |u_h^{MC}|^2_{H^1(\D)} 
    \le a_{MC}(u_h^{MC},u_h^{MC}) 
    = F_{MC}(u_h^{MC})
    \le C \ell_h^{\frac{1}{2}} Q_{MC}[|f|] |u_h^{MC}|_{H^1(\D)}.
  \end{align*}
  Hence, after canceling the norm of $u_h^{MC}$ one time on both sides of the
  inequality we obtain the desired estimate.
\end{proof}

Let us emphasize that the solution to the discrete problem \eqref{eq:discProb}
is a random variable. In fact, it follows directly from
Theorem~\ref{thm:existence} that $u_h^{MC} \in L^p(\Omega;H^1_0(\D))$ provided
$f \in L^p(\D)$ for $p \in [2,\infty]$. 

As in the standard error analysis
(cf. \cite[Theorem~5.7]{larsson2009}), we want to use 
$u_h^{MC}$ as a test function in the discrete problem \eqref{eq:discProb}.
However, in contrast to the situation in Lemma~\ref{lem:QMC} we 
have, in general, that $|\E[ F_{MC}(v_h)] -  F( \E[v_h] )| \neq 0$ for an
arbitrary $S_h$-valued random function $v_h \in L^2(\Omega;H^1_0(\D))$. 
The following lemma gives an estimate of this difference. 

\begin{lemma}
  \label{lem:FMC}
  Let Assumption~\ref{as:triangulation} be satisfied.
  Then, there exists $C \in (0,\infty)$ such that
  for every $h \in (0,1]$, $f \in L^p(\D)$, $p \in [2,\infty]$, 
  and $S_h$-valued random variable $v_h \in L^2(\Omega;H^1_0(\D))$ 
  it holds
  \begin{align*}
    \big| \E \big[ F_{MC}(v_h) - F(v_h) \big] \big| 
    \le 
    \begin{cases}
      C h^{1 - \frac{2}{p}} 
      \|f\|_{L^p(\D)} \| v_h \|_{L^2(\Omega;H^1_0(\D))},
      &\quad \text{ if } p \in [2,\infty),\\
      C \ell_h^{\frac{1}{2}} h \|f\|_{L^\infty(\D)} \| v_h
      \|_{L^2(\Omega;H^1_0(\D))}, 
      &\quad \text{ if } p = \infty,
    \end{cases}
  \end{align*}
  where $\ell_h = \max(1,\log(1/h))$. 
\end{lemma}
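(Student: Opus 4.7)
The starting point is the algebraic identity
\begin{equation*}
\E[F_{MC}(v_h) - F(v_h)] = \sum_{j=1}^{N_h} \mathrm{Cov}\big(V_j,\, F_{MC}(\varphi_j)\big),
\end{equation*}
where $V_j = v_h(z_j)$ are the random nodal coefficients of $v_h$. One obtains this by expanding $v_h = \sum_j V_j \varphi_j$, using the linearity of $F$ and $F_{MC}$, and invoking Lemma~\ref{lem:QMC} to see that $F_{MC}(\varphi_j)$ is an unbiased estimator of $F(\varphi_j)$ for each deterministic basis function, so that the product-of-expectations terms cancel.

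The next step is a Cauchy--Schwarz inequality on each covariance,
\begin{equation*}
\big|\mathrm{Cov}(V_j, F_{MC}(\varphi_j))\big| \le \|V_j - \E V_j\|_{L^2(\Omega)}\,\|F_{MC}(\varphi_j) - F(\varphi_j)\|_{L^2(\Omega)}.
\end{equation*}
The second factor is estimated via Lemma~\ref{lem:QMC} applied to the deterministic integrand $f\varphi_j$, yielding an upper bound $C h\|f\varphi_j\|_{L^2(\D)}$. To bring the $L^p(\D)$-norm of $f$ into play, apply H\"older's inequality with exponents $p/2$ and $p/(p-2)$ together with $\|\varphi_j\|_{L^{2p/(p-2)}(\D)} \le C h^{1-2/p}$, which follows from $\|\varphi_j\|_{L^\infty(\D)} \le 1$ and $|\mathrm{supp}\,\varphi_j|\le C h^2$ (a consequence of Assumption~\ref{as:triangulation}). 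This produces a local bound of the form $C h^{2-2/p}\|f\|_{L^p(\mathrm{supp}\,\varphi_j)}$ for the quadrature-error factor.

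Summation over $j$ then uses another Cauchy--Schwarz, splitting responsibility for the $h$-powers between the two factors. On the $V_j$-side, control $\sum_j (V_j - \E V_j)^2$ by combining the mass-matrix equivalence $\sum_j V_j^2 \le C h^{-2}\|v_h\|_{L^2(\D)}^2$ on quasi-uniform meshes with the Poincar\'e inequality on $H^1_0(\D)$. On the $f$-side, use the bounded-overlap property of the supports $(\mathrm{supp}\,\varphi_j)_j$ together with H\"older across $j$ to recover $\|f\|_{L^p(\D)}$ and the compensating $h$-power. For the endpoint $p=\infty$, substitute the pointwise max-norm estimate~\eqref{eq:maxnorm}, $|V_j|\le C\ell_h^{1/2}|v_h|_{H^1(\D)}$, in place of the $\ell^2$ coefficient bound; this is the source of the $\ell_h^{1/2}$ factor in the stated estimate.

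The main obstacle is the careful bookkeeping of competing $h$-powers: the factor $h$ from the variance bound in Lemma~\ref{lem:QMC}, the $h^{1-2/p}$ from the $L^q$-norm of $\varphi_j$, the $h^{-1}$ from the coefficient-norm equivalence, and the summation factor coming from the patchwise bounded overlap must combine to produce exactly $h^{1-2/p}$ (respectively $h\ell_h^{1/2}$ at $p=\infty$). A secondary technical point is the justification of the algebraic identity for a general random $v_h\in L^2(\Omega;H^1_0(\D))$; by replacing $v_h$ with its conditional expectation given $\sigma((Z_T)_{T\in\mathcal{T}_h})$ one may assume without loss of generality that $v_h$ is $\sigma((Z_T)_T)$-measurable, which both preserves the left-hand side and does not increase $\|v_h\|_{L^2(\Omega;H^1_0(\D))}$.
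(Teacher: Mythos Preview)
Your covariance identity and the termwise Cauchy--Schwarz bound are correct, and your argument does recover the stated estimate when $p=2$ (where the target rate is $h^{0}$). However, for $p>2$ the bookkeeping you describe does \emph{not} close: the Lagrange--basis approach loses exactly one factor of $h^{1-2/p}$ (respectively $h$ at $p=\infty$). Concretely, after Cauchy--Schwarz across $j$ you need
\[
\Big(\sum_{j} \|V_j-\E V_j\|_{L^2(\Omega)}^{2}\Big)^{1/2}
\Big(\sum_{j}\|F_{MC}(\varphi_j)-F(\varphi_j)\|_{L^2(\Omega)}^{2}\Big)^{1/2}.
\]
The first factor is at best $Ch^{-1}\|v_h\|_{L^2(\Omega;H^1_0(\D))}$ via the mass--matrix equivalence, and no Hölder split on the second factor compensates this $h^{-1}$. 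Take $f\equiv 1$: then $\|F_{MC}(\varphi_j)-F(\varphi_j)\|_{L^2(\Omega)}\sim h^{2}$ for each $j$, there are $N_h\sim h^{-2}$ basis functions, and the product of the two factors is of order $h^{-1}\cdot h^{2}\cdot h^{-1}=h^{0}$, whereas the claimed bound for $p=\infty$ is $\ell_h^{1/2}h$. Your alternative for $p=\infty$, replacing $\|V_j\|_{L^2(\Omega)}$ by the uniform bound $C\ell_h^{1/2}\|v_h\|_{L^2(\Omega;H^1_0(\D))}$ from~\eqref{eq:maxnorm}, fares no better: summing $\sum_j\|F_{MC}(\varphi_j)-F(\varphi_j)\|_{L^2(\Omega)}\sim N_h h^{2}\sim 1$ again yields only $O(\ell_h^{1/2})$.

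The missing idea is that one needs a basis in which a \emph{weighted} Cauchy--Schwarz recovers the $H^1_0$--norm without the $h^{-1}$ penalty. The paper expands $v_h$ in the $L^2(\D)$--orthonormal basis $(\psi_j)_{j=1}^{N_h}$ of discrete eigenfunctions of $a$ (eigenvalues $\lambda_{h,j}$), so that
\[
\Big(\sum_{j}\lambda_{h,j}\,\E[|v_j|^2]\Big)^{1/2}=\big(\E[a(v_h,v_h)]\big)^{1/2}\le C\|v_h\|_{L^2(\Omega;H^1_0(\D))}
\]
with no loss in $h$, while the complementary factor becomes $\sum_j \lambda_{h,j}^{-1}\E[|F_{MC}(\psi_j)-F(\psi_j)|^2]$. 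Combining Lemma~\ref{lem:QMC} with the Gagliardo--Nirenberg bound $\|\psi_j\|_{L^{p'}(\D)}\le C\lambda_{h,j}^{1/p}$ (where $\tfrac{1}{p}+\tfrac{1}{p'}=\tfrac{1}{2}$) reduces this to $Ch^{2}\|f\|_{L^p(\D)}^2\sum_j\lambda_{h,j}^{-1+2/p}$, and the eigenvalue asymptotics $\lambda_{h,j}\ge\lambda_j\ge c_1 j$ then give $\sum_j\lambda_{h,j}^{-1+2/p}\le Ch^{-4/p}$ for $p<\infty$ and $\le C\ell_h$ for $p=\infty$. It is precisely this spectral decay that produces the extra $h^{1-2/p}$; the Lagrange hat functions carry no such decay, so the approach you outline cannot reach the stated rate for $p>2$.
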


\begin{proof}
  For the error analysis it is convenient to
  choose an $L^2(\D)$-orthonormal basis $(\psi_j)_{j = 1}^{N_h}$ of $S_h$, 
  which solves the discrete eigenvalue problem
  \begin{align}
    \label{eq:eigval}
    a(\psi_j, w_h) = \lambda_{h,j} (\psi_j, w_h)_{L^2(\D)} 
  \end{align}
  for all $w_h \in S_h$. Hereby, $0 < \lambda_{h,1} \le \lambda_{h,2} \le 
  \ldots \le \lambda_{h,N_h}$ denote the discrete eigenvalues of the bilinear
  form $a$ on the finite element space $S_h \subset H^{1}_0(\D)$. We refer to
  \cite[Section~6.2]{larsson2009} regarding the existence of
  $(\lambda_{h,j})_{j = 1}^{N_h}$ and the associated 
  orthonormal basis $(\psi_j)_{j = 1}^{N_h}$. 

  Next, let $h \in (0,1]$, $f \in L^p(\D)$, $p \in [2,\infty]$, and an
  $S_h$-valued random variable $v_h \in L^2(\Omega;H^1_0(\D))$ be arbitrary.
  Then, we represent $v_h$ in terms of the orthonormal
  basis $(\psi_j)_{j = 1}^{N_h} \subset S_h$ by
  \begin{align}
    \label{eq:v_h_rep}
    v_h = \sum_{j = 1}^{N_h} v_j \psi_j,
  \end{align}
  For this choice of the basis, the random coefficients $(v_j)_{j = 1}^{N_h}
  \subset L^2(\Omega)$ are given by   
  \begin{align*}
    v_j = ( v_h, \psi_j)_{L^2(\D)}.
  \end{align*}
  In particular, it follows from the Cauchy--Schwarz inequality that $v_j$ is
  indeed a real-valued and square-integrable random variable for every $j \in
  \{1,\ldots,N_h\}$.  
  Due to the linearity of $F$ and $F_{MC}$ we then arrive at the estimate 
    \begin{align*}
    \big| \E \big[ F_{MC}(v_h) - F(v_h) \big] \big| 
    &= \Big| \sum_{j = 1}^{N_h} \E\big[ v_j \big( F_{MC}(\psi_j) -
    F(\psi_j)\big) \big] \Big|\\
    &\le \sum_{j = 1}^{N_h} \big( \E \big[ |v_j |^2 \big]
    \big)^{\frac{1}{2}} \big( \E \big[ \big|  F_{MC}(\psi_j) -
    F(\psi_j)\big|^2 \big] \big)^{\frac{1}{2}}\\
    &\le \Big( \sum_{j = 1}^{N_h} \lambda_{h,j} \E \big[ |v_j |^2 \big]
    \Big)^{\frac{1}{2}}
    \Big( \sum_{j = 1}^{N_h} \lambda_{h,j}^{-1}
    \E \big[ \big|  F_{MC}(\psi_j) - F(\psi_j)\big|^2 \big]
    \Big)^{\frac{1}{2}}
  \end{align*}
  by additional applications of the Cauchy--Schwarz inequality.
  From \eqref{eq:v_h_rep} and \eqref{eq:eigval} we then get
  \begin{align*}
    a (v_h, v_h)
    = \sum_{i,j=1}^{N_h} v_j v_i a(\psi_j,\psi_i)
    = \sum_{i,j=1}^{N_h} \lambda_{h,j} v_i v_j
    (\psi_j,\psi_i)_{L^2(\D)}
    = \sum_{j=1}^{N_h} \lambda_{h,j} v_j^2,
  \end{align*}
  since $(\psi_{j})_{j = 1}^{N_h}$ is an orthonormal basis of $S_h$.
  From this it follows that
  \begin{align}
    \label{eq:v_hH1}
    \Big( \sum_{j = 1}^{N_h} \lambda_{h,j} \E \big[ |v_j |^2 \big]
    \Big)^{\frac{1}{2}}
    = \big( \E \big[ a(v_h,v_h) \big] \big)^\frac{1}{2}
    \le \|\sigma\|_{L^\infty(\D)}^{\frac{1}{2}}
    \| v_h \|_{L^2(\Omega;H^1_0(\D))}.
  \end{align}
  Moreover, an application of Lemma~\ref{lem:QMC} yields
  \begin{align*}
    \E \big[ \big|  F_{MC}(\psi_j) - F(\psi_j)\big|^2 \big]
    = \E \Big[ \Big|  Q_{MC}(f \psi_j) - \int_\D f \psi_j \diff{x}
    \Big|^2 \Big] 
    \le \frac{\sqrt{3}}{2} h^2 \| f \psi_j\|_{L^2(\D)}^2
  \end{align*}
  for every $j \in \{1,\ldots,N_h\}$. 
  Further, since $f \in L^p(\D)$, $p \in [2,\infty]$, it follows from an
  application of H\"older's inequality with conjugated exponent $p' \in
  [2,\infty]$ determined by $\frac{1}{p} + \frac{1}{p'} = \frac{1}{2}$ that
  \begin{align*}
    \| f \psi_j\|_{L^2(\D)}
    \le \| f \|_{L^p(\D)}  \| \psi_j\|_{L^{p'}(\D)}.
  \end{align*}
  An application of the Gagliardo--Nierenberg inequality, 
  cf.~\cite[Theorem~1.24]{roubicek2013}, yields
  \begin{align*}
    \| \psi_j \|_{L^{p'}(\D)} \le C \| \psi_j
    \|_{L^2(\D)}^{\frac{2}{p'}} | \psi_j|_{H^1(\D)}^{1 - \frac{2}{p'}},
  \end{align*}
  where the constant $C$ is independent of $j \in \{1,\ldots,N_h\}$.
  Since $\| \psi_j \|_{L^2(\D)} = 1$ for every $j \in
  \{1,\ldots,N_h\}$ and due to \eqref{eq:a_pos} and \eqref{eq:eigval} we
  therefore obtain 
  \begin{align*}
    \| \psi_j \|_{L^{p'}(\D)} \le C | \psi_j|_{H^1(\D)}^{\frac{2}{p}}
    \le \frac{C}{\sigma_0^{\frac{1}{p}}} a(\psi_j,\psi_j)^{\frac{1}{p}} 
    \le \frac{C}{\sigma_0^{\frac{1}{p}}} \lambda_{h,j}^{\frac{1}{p}} 
  \end{align*}
  for every $p, p' \in [2,\infty]$ with $\frac{1}{p} + \frac{1}{p'} =
  \frac{1}{2}$. Altogether, we have the bound
  \begin{align}
    \label{eq:step2}
    \begin{split}
      \sum_{j = 1}^{N_h} \lambda_{h,j}^{-1}
      \E \big[ \big|  F_{MC}(\psi_j) - F(\psi_j)\big|^2 \big]
      &\le \frac{\sqrt{3}}{2} h^2 \| f \|_{L^p(\D)}^2 \sum_{j = 1}^{N_h}
      \lambda_{h,j}^{-1} \| \psi_j \|_{L^{p'}(\D)}^2 \\ 
      &\le C h^{2}
      \| f \|_{L^p(\D)}^2 \sum_{j = 1}^{N_h}
      \lambda_{h,j}^{-1 + \frac{2}{p}}.
    \end{split}
  \end{align}
  Concerning the last sum we recall from \cite[Theorem~6.7]{larsson2009} that
  \begin{align*}
    \lambda_{j} \le \lambda_{h,j} 
  \end{align*}
  for all $j \in \{1,\ldots,N_h\}$, where $(\lambda_j)_{j \in \N}$ denotes the
  family of eigenvalues of the bilinear form $a$ on the full space $H^1_0(\D)$.
  Moreover, it is well-known, cf.~\cite[Section~6.1]{larsson2009}, that 
  there exist constants $c_1, c_2 \in (0,\infty)$ only depending on $\sigma$
  and $\D$ such that 
  \begin{align*}
    c_1 j \le \lambda_j \le c_2 j.
  \end{align*}
  From this it follows that
  \begin{align*}
    \sum_{j = 1}^{N_h} \lambda_{h,j}^{-1+ \frac{2}{p}}
    &\le \sum_{j = 1}^{N_h} \lambda_{j}^{-1+ \frac{2}{p}}
    \le c_1^{-1 + \frac{2}{p}} \sum_{j = 1}^{N_h} j^{-1+ \frac{2}{p}}
    \le c_1^{-1 + \frac{2}{p}}
    \Big(1 + \int_{1}^{N_h} y^{-1 + \frac{2}{p}} \diff{y}
    \Big).
  \end{align*}
  Hence, we obtain
  \begin{align*}
    \sum_{j = 1}^{N_h} \lambda_{h,j}^{-1+ \frac{2}{p}}
    \le 
    \begin{cases}
      \frac{p}{2} c_1^{-1 + \frac{2}{p}} N_h^{\frac{2}{p}},
      &\quad \text{if } p \in [2,\infty),\\
      c_1^{-1} (1 + \log(N_h)),
      &\quad \text{if } p = \infty.
    \end{cases}
  \end{align*}
  From \eqref{eq:eigval}, \eqref{eq:a_bdd}, and 
  the inverse estimate \eqref{eq:inverse} it then follows that
  \begin{align*}
    N_h \le \frac{1}{c_1} \lambda_{h,N_h} 
    = \frac{1}{c_1} a(\psi_{N_h},\psi_{N_h})
    \le \frac{1}{c_1} \| \sigma \|_{L^\infty(\D)} | \psi_{N_h}|^2_{H^1(\D)}
    \le C h^{-2}.
  \end{align*}
  This implies that
  $\log(N_h) \le C \max(1, \log(1/h)) = C \ell_h$.
  Altogether, this yields
  \begin{align}
    \label{eq:sumlam-1}
    \sum_{j = 1}^{N_h} \lambda_{h,j}^{-1 + \frac{2}{p}}
    \le 
    \begin{cases}
      C h^{- \frac{4}{p}},&\quad \text{if } p \in [2,\infty),\\
      C \ell_h,&\quad \text{if } p = \infty.
    \end{cases}
  \end{align}
  Combining this with \eqref{eq:v_hH1} and \eqref{eq:step2} then completes the
  proof. 
\end{proof}

Next, we state and prove the main result of this section.

\begin{theorem}
  \label{thm:errorH1}
  Suppose that  $\sigma \in
  L^\infty(\D) \cap W^{s,q}(\D)$, $s \in (0,1]$, $q \in (2,\infty)$, with
  $\sigma(x) \ge \sigma_0 > 0$ for almost every $x \in\D$. 
  Let Assumptions~\ref{as:reg} and \ref{as:triangulation} be satisfied.
  If $f \in L^p(\D)$, $p \in [2,\infty)$, then it holds 
  \begin{align*}
    \big\| u - u_h^{MC} \big\|_{L^2(\Omega;H^1_0(\D))} 
    &\le C h \|u \|_{H^2(\D)} + C h^s \|u \|_{H^2(\D)} | \sigma |_{W^{s,q}(\D)}
    + C h^{1 - \frac{2}{p}} \| f \|_{L^p(\D)}
  \end{align*}
  for every $h \in (0,1]$.
  Further, if $f \in L^\infty(\D)$ then it holds
  \begin{align*}
    \big\| u - u_h^{MC} \big\|_{L^2(\Omega;H^1_0(\D))} 
    &\le C h \|u \|_{H^2(\D)} + C h^s \|u \|_{H^2(\D)} | \sigma |_{W^{s,q}(\D)}
    + C \ell_h^{\frac{1}{2}} h \| f \|_{L^\infty(\D)}
  \end{align*}
  for every $h \in (0,1]$.
\end{theorem}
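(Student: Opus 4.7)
The plan is a Strang-type decomposition around the Ritz projector $R_h$, combined with an eigenbasis argument analogous to the proof of Lemma~\ref{lem:FMC} in order to control the randomized bilinear form. First I would split
\[
u - u_h^{MC} = (u - R_h u) + e_h, \qquad e_h := R_h u - u_h^{MC}.
\]
The deterministic summand is handled by \eqref{eq:RitzH1} and contributes the leading $C h \|u\|_{H^2(\D)}$ term. To control $\|e_h\|_{L^2(\Omega;H^1_0(\D))}$ I would invoke the $\P$-a.s.\ coercivity of $a_{MC}$ from Lemma~\ref{lem:wellposedness}, test the discrete equation \eqref{eq:discProb} against $e_h \in S_h$, and use $a(R_h u, e_h) = a(u, e_h) = F(e_h)$ from \eqref{eq:Ritz} and \eqref{eq:varprob2d}. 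This gives $\P$-almost surely
\[
\sigma_0 |e_h|_{H^1(\D)}^2 \le (a_{MC} - a)(R_h u, e_h) + (F - F_{MC})(e_h),
\]
and taking expectations reduces the problem to bounding each of the two right-hand terms by $C\,(\cdot)\,\|e_h\|_{L^2(\Omega;H^1_0(\D))}$ and then dividing through.

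The load term is immediate from Lemma~\ref{lem:FMC} and produces the $C h^{1-2/p} \|f\|_{L^p(\D)}$ contribution (with the expected logarithmic factor in the $p = \infty$ case). The hard part is the stiffness term $\E[(a_{MC}-a)(R_h u, e_h)]$, which I would mimic from the proof of Lemma~\ref{lem:FMC}: expand $e_h = \sum_{j=1}^{N_h} e_j \psi_j$ in the $a$-eigenbasis of $S_h$, apply Cauchy--Schwarz twice with weights $\lambda_{h,j}^{\pm 1/2}$, and peel off a factor $\|\sigma\|_{L^\infty(\D)}^{1/2} \|e_h\|_{L^2(\Omega;H^1_0(\D))}$ via \eqref{eq:v_hH1}. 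The remaining task is to show
\[
\sum_{j=1}^{N_h} \lambda_{h,j}^{-1} \E\big[|(a_{MC}-a)(R_h u, \psi_j)|^2\big] \le C h^{2s} \,|\sigma|_{W^{s,q}(\D)}^2\, \|u\|_{H^2(\D)}^2.
\]

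Here the key observation is that $\nabla R_h u|_T$ and $\nabla \psi_j|_T$ are constants on every triangle, so
\[
(a_{MC}-a)(R_h u, \psi_j) = \sum_{T \in \mathcal{T}_h} X_T \, (\nabla R_h u|_T) \cdot (\nabla \psi_j|_T), \qquad X_T := |T|\sigma(Z_T) - \int_T \sigma(x)\diff{x},
\]
and the family $(X_T)_T$ is independent and centered, so that only a sum of variances survives after taking expectation. A triangle-wise Poincar\'e-type estimate combined with the diameter bound $|x-y| \le h$ on $T$ and H\"older's inequality then yields $\E[X_T^2] \le C |T|^{2-4/q} h^{2s+4/q} |\sigma|_{W^{s,q}(T)}^2$, which via the quasi-uniformity from Assumption~\ref{as:triangulation} already carries the right $h$-power. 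Summing over $T$ and $j$ using Gagliardo--Nirenberg on the eigenfunctions $\psi_j$, the eigenvalue asymptotics $\lambda_{h,j} \ge c j$, and the $H^1$-stability bound $|R_h u|_{H^1(\D)} \le C\|u\|_{H^2(\D)}$, exactly as in the proof of Lemma~\ref{lem:FMC}, should then produce the desired factor $Ch^s |\sigma|_{W^{s,q}(\D)} \|u\|_{H^2(\D)}$.

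The main obstacle is the bookkeeping in this last step: simultaneously balancing the eigenvalue weights $\lambda_{h,j}^{-1}$, the $|T|^{2-4/q}$ factors from H\"older, the coupling of the double sum through $\sum_T |\nabla \psi_j|_T|^2$-type quantities, and the $L^{p'}$-interpolation estimates on the eigenfunctions. Once this second factor is controlled, dividing out $\|e_h\|_{L^2(\Omega;H^1_0(\D))}$ and adding the Ritz contribution yields the claimed inequality for $p \in [2,\infty)$; the case $p = \infty$ is identical up to the logarithmic factor $\ell_h^{1/2}$ inherited from Lemma~\ref{lem:FMC}.
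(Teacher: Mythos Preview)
Your overall framework matches the paper's exactly: the Strang splitting around $R_h$, the $\P$-a.s.\ coercivity of $a_{MC}$ from Lemma~\ref{lem:wellposedness}, testing with $e_h$, taking expectations, and invoking Lemma~\ref{lem:FMC} for the load contribution. That part is fine.

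The divergence is in the stiffness term $E_2 = \big|\E[(a_{MC}-a)(R_h u, e_h)]\big|$. The paper does \emph{not} expand $e_h$ in the discrete eigenbasis here. Instead it applies Cauchy--Schwarz directly, first in $\omega$ and then over the triangles with weights $|T|^{\pm 1}$, which cleanly isolates a factor $\|e_h\|_{L^2(\Omega;H^1_0(\D))}$ (using that $\nabla e_h$ is piecewise constant). What remains is the deterministic quantity
\[
\Big(\sum_{T} |T|^{-2} \int_T \Big(\int_T(\sigma(x)-\sigma(z))\diff{x}\Big)^2 |\nabla R_h u(z)|^2 \diff{z}\Big)^{1/2},
\]
and the decisive step is to split $\nabla R_h u = \nabla(R_h-I)u + \nabla u$ \emph{inside} this sum. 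The first piece is handled by $\|\sigma\|_{L^\infty}$ and \eqref{eq:RitzH1}, contributing another $Ch\|u\|_{H^2(\D)}$. For the second piece the paper applies H\"older with exponents $q/2$ and $q/(q-2)$ to pair the local $W^{s,q}$-seminorm of $\sigma$ against $\|\nabla u\|_{L^{2q/(q-2)}(T)}$, and then uses the Sobolev embedding $H^2(\D)\hookrightarrow W^{1,2q/(q-2)}(\D)$ to recover $\|u\|_{H^2(\D)}$. This is where the $h^s|\sigma|_{W^{s,q}(\D)}\|u\|_{H^2(\D)}$ term comes from.

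Your eigenbasis route misses precisely these two ingredients. Transplanting the Gagliardo--Nirenberg argument from Lemma~\ref{lem:FMC} targets the wrong object: in Lemma~\ref{lem:FMC} one needs $\|\psi_j\|_{L^{p'}}$ to pair with $\|f\|_{L^p}$, but here the $h^s$ rate is driven by the higher $L^{2q/(q-2)}$-integrability of $\nabla u$ (not of $\psi_j$), which is only available through the splitting and the embedding from $H^2$. If you only feed in $|R_h u|_{H^1(\D)}\le C\|u\|_{H^2(\D)}$, as you propose, the bookkeeping you flag as an ``obstacle'' does not close: the coupled sum $\sum_T \E[X_T^2]\,|\nabla R_h u|_T|^2 \sum_j \lambda_{h,j}^{-1}|\nabla\psi_j|_T|^2$ has no evident mechanism to produce $h^{2s}|\sigma|_{W^{s,q}(\D)}^2$ without losing powers of $h$. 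So the obstacle is real, not just accounting; the fix is to abandon the eigenbasis for $E_2$ and argue directly as above.
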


\begin{proof}
  Let us split the error into the following two parts
  \begin{align*}
    u_h^{MC} - u = u_h^{MC} - R_h u + R_h u - u =: \theta + \rho,
  \end{align*}
  where $R_h \colon H^1_0(\D) \to S_h$ denotes the Ritz projector
  (see Section~\ref{sec:notation}). Observe that $\theta$ and $\rho$ are
  orthogonal with respect to the bilinear form $a$. 
  Then, it follows from the positivity \eqref{eq:a_pos} and
  boundedness \eqref{eq:a_bdd} of $a$ that
  \begin{align*}
    \sigma_0 |u_h^{MC} - u|^2_{H^1(\D)} 
    &\le a( u_h^{MC} - u, u_h^{MC} - u) = a( \theta, \theta) + a(\rho, \rho)\\
    &\le \| \sigma \|_{L^\infty(\D)} \big( |\theta|^2_{H^1(\D)} +
    |\rho|^2_{H^1(\D)} \big).
  \end{align*}
  Standard error estimates for the conforming finite element method,
  cf.~\eqref{eq:RitzH1}, yield
  \begin{align}
    \label{eq:rho}
    | \rho |_{H^1(\D)} = | R_h u - u|_{H^1(\D)} \le C h \|u\|_{H^2(\D)}.
  \end{align}
  Moreover, from \eqref{eq:varprob2d} and \eqref{eq:discProb} we get
  $\P$-almost surely for every $v_h \in S_h$ that
  \begin{align*}
    a_{MC}(\theta, v_h) &= a_{MC}(u_h^{MC}, v_h) - a_{MC}(R_h u, v_h)\\
    &= F_{MC}(v_h) - F(v_h) + a(R_h u,v_h) - a_{MC}(R_h u,v_h),
  \end{align*}
  since $F(v_h) = a(u,v_h) = a(R_h u, v_h)$ for every $v_h \in S_h$.
  In particular, for the choice $v_h = \theta(\omega) = u_h^{MC}(\omega) - R_h u \in
  S_h$ we obtain $\P$-almost surely that
  \begin{align*}
    \sigma_0 |\theta|^2_{H^1(\D)} 
    \le a_{MC}(\theta, \theta)
    = F_{MC}(\theta) - F(\theta) + a(R_h u, \theta)
    - a_{MC}(R_h u,\theta).
  \end{align*}
  From Lemma~\ref{lem:wellposedness} and Theorem~\ref{thm:existence} it follows
  directly that all terms on the right-hand side are integrable with respect to
  $\P$. Hence, after taking expectations it remains to give error estimates for
  the two terms
  \begin{align*}
    E_1 &= \big| \E\big[ F_{MC}(\theta) - F(\theta) \big] \big|,\\
    E_2 &= \big| \E\big[ a(R_h u, \theta) - a_{MC}(R_h u, \theta) \big] \big|.
  \end{align*}
  An application of Lemma~\ref{lem:FMC} directly yields 
  \begin{align*}
    E_1 \le 
    \begin{cases}
      C h^{1 - \frac{2}{p}} \| f \|_{L^p(\D)} \| \theta
      \|_{L^2(\Omega;H^1_0(\D))},&\quad \text{if } p \in [2,\infty),\\
      C \ell_h^{\frac{1}{2}} h \| f \|_{L^\infty(\D)}
      \| \theta \|_{L^2(\Omega;H^1_0(\D))},&\quad \text{if } p = \infty.  
    \end{cases}
  \end{align*}
  Next, we turn to the term $E_2$ which is given by 
  \begin{align*}
    E_2 &= \big| \E \big[ a(R_h u,\theta) - a_{MC}(R_h u, \theta) \big] \big|\\
    &= \Big| \sum_{T \in \mathcal{T}_h} \E \Big[ \int_T \sigma(x) \nabla R_h
    u(x) \cdot \nabla \theta(x) \diff{x} - |T| \sigma(Z_T) \nabla R_h u(Z_T)
    \cdot \nabla \theta (Z_T) \Big] \Big|.
  \end{align*}
  Since $R_h u \in S_h$ and $\theta \colon \Omega \to S_h$, the respective 
  gradients are constant on each triangle. Therefore, we have
  $\nabla R_h u (x) \cdot \nabla \theta (x) = \nabla R_h u (Z_T) \cdot \nabla
  \theta (Z_T)$ for every $x \in T$. Hence, we get
  \begin{align*}
    E_2 &= \Big| \sum_{T \in \mathcal{T}_h} \E \Big[ \Big( \int_T \sigma(x)
    \diff{x} - |T| \sigma(Z_T) \Big) \nabla R_h u(Z_T)
    \cdot \nabla \theta (Z_T) \Big] \Big|\\
    &\le \sum_{T \in \mathcal{T}_h} \Big( 
    \E \Big[ \Big| \Big(\int_T \sigma(x)
    \diff{x} - |T| \sigma(Z_T) \Big) \nabla R_h u(Z_T) \Big|^2 \Big]
    \Big)^{\frac{1}{2}} \big( \E \big[ | \nabla \theta(Z_T)|^2 \big]
    \big)^{\frac{1}{2}}\\
    &\le    
    \Big( \sum_{T \in \mathcal{T}_h} |T|^{-1} \E \Big[ 
    \Big( \int_T \sigma(x) \diff{x} - |T| \sigma(Z_T) \Big)^2
    \big| \nabla R_h u(Z_T)\big|^2 \Big] 
    \Big)^{\frac{1}{2}}\\
    &\quad \times \Big( \sum_{T \in \mathcal{T}_h} |T|
    \E\big[ | \nabla \theta (Z_T) |^2 \big]
    \Big)^{\frac{1}{2}}
  \end{align*}
  by further applications of the Cauchy--Schwarz inequality.
  Moreover, by making again use of the fact that the gradient of $\theta$
  is piecewise constant we obtain
  \begin{align*}
    \Big( \sum_{T \in \mathcal{T}_h} |T|
    \E\big[ | \nabla \theta (Z_T) |^2 \big]
    \Big)^{\frac{1}{2}}
    & =
    \Big( \E \Big[ \sum_{T \in \mathcal{T}_h} |T|
    | \nabla \theta (Z_T) |^2 \Big]
    \Big)^{\frac{1}{2}}\\
    &= \Big( \E \Big[ \int_\D |\nabla \theta(x)|^2
    \diff{x} \Big] \Big)^{\frac{1}{2}}
    = \| \theta\|_{L^2(\Omega;H^1_0(\D))}.
  \end{align*}
  Further, due to $Z_T \sim |T|^{-1} \one_{T}(z) \diff{z}$ it holds
  \begin{align}
    \label{eqn:est1}
    \begin{split}
    &\Big( \sum_{T \in \mathcal{T}_h} |T|^{-1} \E \Big[ 
    \Big( \int_T \sigma(x) \diff{x} - |T| \sigma(Z_T) \Big)^2
    | \nabla R_h u(Z_T)|^2 \Big] 
    \Big)^{\frac{1}{2}}\\
    &\quad = \Big( \sum_{T \in \mathcal{T}_h} 
    |T|^{-2} \int_T \Big( \int_T \big( \sigma(x) - \sigma(z) \big) \diff{x}
    \Big)^2 | \nabla R_h u(z) |^2 \diff{z} \Big)^{\frac{1}{2}}\\
    &\quad \le \Big( \sum_{T \in \mathcal{T}_h} 
    |T|^{-2} \int_T \Big( \int_T \big( \sigma(x) - \sigma(z) \big) \diff{x}
    \Big)^2 | \nabla (R_h - I) u(z) |^2 \diff{z} \Big)^{\frac{1}{2}}\\
    &\qquad + \Big( \sum_{T \in \mathcal{T}_h} 
    |T|^{-2} \int_T \Big( \int_T \big( \sigma(x) - \sigma(z) \big) \diff{x}
    \Big)^2 | \nabla u(z) |^2 \diff{z} \Big)^{\frac{1}{2}},
    \end{split}
  \end{align}
  where we applied Minkowski's inequality in the last step.
  The first term is then estimated by
  \begin{align*}
    &\Big( \sum_{T \in \mathcal{T}_h} 
    |T|^{-2} \int_T \Big( 
    \int_T \big( \sigma(x) - \sigma(z) \big) \diff{x} \Big)^2
    | \nabla (R_h - I) u(z) |^2 \diff{z} \Big)^{\frac{1}{2}}\\
    &\quad \le \Big( \sum_{T \in \mathcal{T}_h} 
    |T|^{-1} \int_T \int_T \big( \sigma(x) - \sigma(z) \big)^2 \diff{x} 
    | \nabla (R_h - I) u(z) |^2 \diff{z} \Big)^{\frac{1}{2}}\\
    &\quad \le C \| \sigma\|_{L^\infty(\D)}
    \Big( \sum_{T \in \mathcal{T}_h} \int_T | \nabla (R_h - I) u(z) |^2 \diff{z}
    \Big)^{\frac{1}{2}}\\
    &\quad \le C \| \sigma\|_{L^\infty(\D)} \big| (R_h - I) u \big|_{H^1(\D)}
    \le C \| \sigma\|_{L^\infty(\D)} \|u \|_{H^2(\D)} h 
  \end{align*}
  by a further application of \eqref{eq:rho}.

  For the estimate of the last term in \eqref{eqn:est1} we first consider
  $s \in (0,1)$. Applying H\"older's
  inequality with exponents $\rho = \frac{q}{2} \in (1,\infty)$ and $\rho' =
  \frac{q}{q-2} \in (1,\infty)$ yields
  \begin{align*}
    &\Big( \sum_{T \in \mathcal{T}_h} 
    |T|^{-2} \int_T \Big( \int_T \big( \sigma(x) - \sigma(z) \big) \diff{x}
    \Big)^2 | \nabla u(z) |^2 \diff{z} \Big)^{\frac{1}{2}}\\
    &\quad \le \Big( \sum_{T \in \mathcal{T}_h} 
    |T|^{-2} \Big( \int_T \Big( \int_T \big| \sigma(x) - \sigma(z) \big|
    \diff{x} \Big)^{2 \rho} \diff{z} \Big)^{\frac{1}{\rho}}
    \Big( \int_T | \nabla u(z) |^{2 \rho'} \diff{z} \Big)^{\frac{1}{\rho'}}
    \Big)^{\frac{1}{2}}\\
    &\quad \le \Big( \sum_{T \in \mathcal{T}_h} 
    |T|^{- \frac{1}{\rho}} \Big( \int_T \int_T \big| \sigma(x) - \sigma(z)
    \big|^{2 \rho} \diff{x} \diff{z} \Big)^{\frac{1}{\rho}}
    \Big( \int_T | \nabla u(z) |^{2 \rho'} \diff{z} \Big)^{\frac{1}{\rho'}}
    \Big)^{\frac{1}{2}}\\
    &\quad \le \Big( \sum_{T \in \mathcal{T}_h} 
    |T|^{- 1} \int_T \int_T \big| \sigma(x) - \sigma(z)
    \big|^{q} \diff{x} \diff{z} \Big)^{\frac{1}{q}}
    \Big( \sum_{T \in \mathcal{T}_h}
    \int_T | \nabla u(z) |^{2 \rho'} \diff{z} \Big)^{\frac{1}{2\rho'}}\\
    &\quad \le  \Big( \sum_{T \in \mathcal{T}_h} 
    |T|^{-1} h^{2 + qs} \int_T \int_T \frac{\big| \sigma(x) - \sigma(z)
    \big|^{q}}{|x-z|^{2 +qs}} \diff{x} \diff{z} \Big)^{\frac{1}{q}}
    \| u \|_{W^{1,2\rho'}(\D)} 
  \end{align*}
  since $|x-y| \le h$ for all $x,y \in T$.
  
  Next, recall that the Sobolev embedding theorem
  \cite[Theorem~4.12]{adams2003} yields
  \begin{align*}
    \| u \|_{W^{1,2\rho'}(\D)} \le  C \|u \|_{H^2(\D)}.
  \end{align*}
  In addition, we have
  $|T|^{-1} \le c^{-1} h^{-2}$ due to Assumption~\ref{as:triangulation}.
  Altogether, this shows 
  \begin{align*}
    &\Big( \sum_{T \in \mathcal{T}_h} 
    |T|^{-1} h^{2 + qs} \int_T \int_T \frac{\big| \sigma(x) - \sigma(z)
    \big|^{q}}{|x-z|^{2 +qs}} \diff{x} \diff{z} \Big)^{\frac{1}{q}}
    \| u \|_{W^{1,2\rho'}(\D)}\\
    &\quad \le C \|u \|_{H^2(\D)} | \sigma |_{W^{s,q}(\D)} h^s.
  \end{align*}
  This completes the proof of the case $s \in (0,1)$. The border case $s=1$
  follows by similar arguments and an additional application of the
  Poincar\'e--Wirtinger inequality. The details are left to the reader.
\end{proof}


\section{Variance reduction by importance sampling}
\label{sec:importanceSampling}

The goal of this section is to increase the accuracy of
the randomized quadrature formula $Q_{MC}$ introduced in \eqref{eq:MC} 
by applying a standard variance reduction technique for Monte Carlo methods
termed \emph{importance sampling}. 
An introduction to importance sampling and further variance reduction
techniques is found, for instance, in \cite[Chapter~6]{evans2000},
\cite[Chapter~3]{madras2002}, and \cite[Kapitel~5]{mueller2012}.

Let us briefly recall the main idea of importance sampling. Suppose one wants
to approximate the integral 
\begin{align*}
  \int_\D v(x) \diff{x},
\end{align*}
where $v \in L^2(\D)$ is given. Then, the standard Monte Carlo approach is to
rewrite the integral as an expectation
\begin{align*}
  \E[ v(Z) ] = |\D|^{-1} \int_{\D} v(x) \diff{x},
\end{align*}
where $Z \colon \Omega \to \D$ is a uniformly distributed random variable. In
particular, the probability density function of $Z$ is given by $p_Z(x) =
\frac{1}{|\D|}\one_{\D}(x)$. Then, the standard Monte Carlo estimator of the
integral is defined as
\begin{align*}
  \frac{|\D|}{M} \sum_{i = 1}^{M} v(Z_i),
\end{align*}
where $(Z_i)_{i = 1}^{M}$, $M \in \N$, is a family of independent and
identically distributed copies of $Z$. This estimator is unbiased and its
variance is equal to
\begin{align*}
  \Big\| \frac{|\D|}{M} \sum_{i = 1}^{M} v(Z_i) - \int_\D v(x) \diff{x}
  \Big\|_{L^2(\Omega)}^2 
  &= \frac{1}{M} \mathrm{var}\big(|\D| v(Z)\big).
\end{align*}
Therefore, the accuracy of the Monte Carlo estimator is determined by
the number of samples $M \in \N$ and the variance of the random variable $|\D|
v(Z)$. 

The main idea of importance sampling is then to increase the accuracy of the
standard Monte Carlo estimator by replacing the uniformly distributed random
variable $Z$ with a random variable $Y \colon \Omega \to \D$ whose distribution
is determined by a probability distribution function $p_Y$. If the density
$p_Y$ satisfies that $p_Y(x) = 0$ only if $v(x) = 0$,
then it follows from the transformation theorem that
\begin{align*}
  \int_\D v(x) \diff{x} = \int_\D \frac{v(x)}{p_Y(x)} p_Y(x) \diff{x}
  = \E \Big[ \frac{v(Y)}{p_Y(Y)} \Big].
\end{align*}
From this one derives the following \emph{importance sampling estimator}
given by
\begin{align*}
  \frac{1}{M} \sum_{i = 1}^{M} \frac{v(Y_i)}{p_Y(Y_i)},
\end{align*}
where $(Y_i)_{i = 1}^M$ denotes a family of independent and identically
distributed copies of $Y$.
The art of importance sampling is then to determine a suitable density $p_Y$
such that the variance is reduced and, at the same time, the generation of
random variates with density $p_Y$ is computational feasible and
affordable. It is known (cf. \cite[Theorem~6.5]{evans2000}) that the optimal
choice of the density $p_Y$ is
\begin{align*}
  p_Y^\ast(x) = \frac{|v(x)|}{\int_\D |v(y)|\diff{y}}, \quad \text{ for } x \in
  \D.
\end{align*}
Observe that $p_Y^\ast$ suggests to avoid sampling in regions of $|\D|$, where
$|v|$ is zero or very small. However, since the denominator is typically unknown 
it is, in general, not possible to use the density $p_Y^\ast$ in practice.

Nevertheless, one can often still make use of the underlying idea 
to improve the accuracy of the randomized
quadrature rule \eqref{eq:MC}. 
To demonstrate this, we solely focus on the Poisson equation
\begin{align}
  \label{eq:Poisson}
  \begin{cases}
    - \Delta u = f,& \quad \text{ in } \D,\\
    u = 0,& \quad \text{ on } \partial \D,
  \end{cases}
\end{align}
where $\D \subset \R^2$ is a convex, bounded and polygonal
domain and $f \in L^p(\D)$ for some $p\in [2,\infty]$. 

Observe that the Poisson equation is a particular case of the boundary
value problem \eqref{eq:BVP} with $\sigma \equiv 1$. In this case, 
the assembly of the stiffness matrix $A_h$ in \eqref{eq:matvec} does not
require the application of a (randomized) quadrature rule. 

Moreover, we recall that the entries of the 
load vector $f_h \in \R^{N_h}$ defined in
\eqref{eq:load} are given by 
\begin{align*}
  F(\varphi_j) = \int_{\D} f(x) \varphi_j(x) \diff{x}, \quad j \in
  \{1,\ldots,N_h\},
\end{align*}
where $(\varphi_j)_{j = 1}^{N_h}$ denotes the standard Lagrange basis of the
finite element space $S_h$. According to the results in the previous section,
these entries are then approximated by an application of the randomized 
quadrature formula \eqref{eq:MC} given by
\begin{align*}
  F_{MC}( \varphi_j) = Q_{MC}[ f \varphi_j ] 
  = \sum_{T \in \mathcal{T}_h} |T| f(Z_T) \varphi_j(Z_T)
\end{align*}
for every $j \in \{1,\ldots,N_h\}$. Observe that for each triangle $T \in
\mathcal{T}_h$ the term 
\begin{align}
  \label{eq:stdMC}
  |T| f(Z_T) \varphi_j(Z_T)
\end{align}
can be regarded as a standard
Monte Carlo estimator with only $M=1$ sample for the integral
\begin{align*}
  \int_T f(x) \varphi_j(x) \diff{x}.
\end{align*}
The idea of this section is to replace this term by a suitable importance
sampling estimator. 

Since we do not want to impose any additional assumption on $f$ it is,
as already mentioned above, not feasible to use the corresponding optimal
density function  $p_Y^\ast$ with $v = f \varphi_j$.
Instead, we recall that
the piecewise linear basis function $\varphi_j$ 
is equal to zero in two of the three vertices and equal to one in
the remaining vertex of every triangle
$T \in \mathcal{T}_h$ with $T \cap \mathrm{supp}(\varphi_j)
\neq \emptyset$. In particular, this implies $\varphi_j(x) \ge 0$ for every $x
\in T$. Further, it holds
\begin{align*}
  \int_T \varphi_j(x) \diff{x} = \frac{1}{3} |T|.
\end{align*}
Therefore, the mapping $p_{T,j} \colon \D \to [0,\infty)$ defined by
\begin{align}
  \label{eq:pTj}
  p_{T,j}(x) = 3 |T|^{-1} \varphi_j(x) \one_T(x), \quad x \in \D,
\end{align}
is a probability density function. By replacing $Z_T$ in \eqref{eq:stdMC} with
a random variable  $Y_{T,j} \sim p_{T,j}(x) \diff{x}$
we arrive at the corresponding importance sampling estimator (again with only 
$M = 1$ sample)
\begin{align*}
  \frac{f(Y_{T,j})\varphi_j(Y_{T,j})}{p_{T,j}(Y_{T,j})} =
  \frac{1}{3} |T| f(Y_{T,j}) 
\end{align*}
for the integral $\int_T f(x) \varphi_j(x) \diff{x}$.
Observe that the use of $Y_{T,j}$ significantly decreases the probability of
the integrand $f \varphi_j$ being evaluated at a point $x \in T$ close to
a vertex, where the basis function 
$\varphi_j$ is equal to zero. We discuss the simulation of the
random variable $Y_{T,j}$ in Section~\ref{sec:sampling}.

To sum up, this suggests to use the linear mapping
$F_{IS} \colon S_h \to L^2(\Omega)$ given by
\begin{align}
  \label{eq:FIS}
  F_{IS}(v_h) =  \frac{1}{3} \sum_{T \in \mathcal{T}_h} |T| 
  \sum_{\substack{j = 1\\ T \cap \mathrm{supp}(\varphi_j) \neq
  \emptyset}}^{N_h} v_j f(Y_{T,j})
\end{align}
for every $v_h = \sum_{j = 1}^{N_h} v_j \varphi_j \in S_h$. Hereby, 
$(Y_{T,j})_{T \in \mathcal{T}_h, j \in \{1,\ldots,N_h\}}$ is a
family of independent random variables with $Y_{T,j} \sim p_{T,j}(x) \diff{x}$.
In particular, the entries of the load vector $f_h$ are then approximated by
\begin{align*}   
  F_{IS}(\varphi_j) = \frac{1}{3} \sum_{\substack{T \in \mathcal{T}_h\\
  T \cap \mathrm{supp}(\varphi_j) \neq \emptyset}} |T| f(Y_{T,j})
\end{align*}
for every $j \in \{1,\ldots,N_h\}$. 
As the following lemma shows, the 
importance sampling estimator \eqref{eq:FIS} is unbiased and
convergent in the limit $h \to 0$.

\begin{lemma}
  \label{lem:QMCIS}
  Let $\mathcal{T}_h$ be an admissible triangulation with maximal edge length 
  $h \in (0,1]$. Then, for every $f \in L^1(\D)$ and $v_h \in S_h$
  it holds that
  \begin{align*}
    \E \big[ F_{IS}(v_h)  \big] = \int_\D f(x)v_h(x) \diff{x}.
  \end{align*}
  Further, if $f \in L^p(\D)$, $p \in [2,\infty]$, then it holds for every
  $v_h \in S_h$ that 
  \begin{align*}
    &\Big\| \int_\D f(x)v_h(x) \diff{x} - F_{IS}(v_h) \Big\|_{L^2(\Omega)}\\
    &\quad \le  \frac{1}{\sqrt[4]{12}}  h \| v_h
    \|_{L^\infty(\D)}^{\frac{2}{p}}
    \| f \|_{L^p(\D)} 
    \big( 2 h |v_h|_{H^1(\D)} + \|v_h\|_{L^2(\D)}
    \big)^{1 - \frac{2}{p}}.
  \end{align*}
  In addition, if $f \in W^{s,2}(\D)$ for some $s \in (0,1)$ then it
  holds for every $v_h \in S_h$ that 
  \begin{align*}
    \Big\| \int_\D f(x)v_h(x) \diff{x} - F_{IS}(v_h) \Big\|_{L^2(\Omega)}
    \le h^{1 + s} \|v_h\|_{L^\infty(\D)} |f|_{W^{s,2}(\D)}.
  \end{align*}
\end{lemma}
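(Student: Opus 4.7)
The approach is to regard $F_{IS}(v_h)$ as a sum of independent contributions indexed by pairs $(T,j)$ with $T\cap\mathrm{supp}(\varphi_j)\neq\emptyset$, which reduces the analysis to per-term variance estimates on each triangle.

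For the unbiasedness, the transformation theorem applied to the density $p_{T,j}$ from \eqref{eq:pTj} yields
\begin{align*}
  \E\Big[\tfrac{|T|}{3} f(Y_{T,j})\Big]
  = \tfrac{|T|}{3}\int_T f(x)\,\tfrac{3\varphi_j(x)}{|T|}\diff{x}
  = \int_T f(x)\varphi_j(x)\diff{x},
\end{align*}
and multiplying by $v_j$ and summing over $j$ and $T$ recovers $\sum_{j=1}^{N_h} v_j\int_\D f\varphi_j\diff{x} = \int_\D f v_h\diff{x}$.

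For the variance, I would set $\xi_{T,j} := \tfrac{|T|}{3} v_j f(Y_{T,j}) - v_j\int_T f\varphi_j\diff{x}$. Since the family $(Y_{T,j})$ is independent, the $\xi_{T,j}$ are pairwise orthogonal and centered in $L^2(\Omega)$, so
\begin{align*}
  \Big\|\int_\D f v_h\diff{x} - F_{IS}(v_h)\Big\|_{L^2(\Omega)}^2
  = \sum_{T,j}\E[\xi_{T,j}^2]
  \le \sum_{T,j} v_j^2\,\tfrac{|T|}{3}\int_T f(x)^2\varphi_j(x)\diff{x},
\end{align*}
using $\mathrm{var}(X)\le\E[X^2]$ and $\E[f(Y_{T,j})^2]=\tfrac{3}{|T|}\int_T f^2\varphi_j\diff{x}$. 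Setting $w_T^2 := \sum_j v_j^2\varphi_j$, this equals $\tfrac{1}{3}\int_\D f^2\sum_T|T|\one_T w_T^2\diff{x}$. For the $L^p$ estimate I would apply H\"older's inequality on each triangle to split off $\|f\|_{L^p(T)}^2$ from $\|w_T\|_{L^{2p/(p-2)}(T)}^2$, and then interpolate $\|w_T\|_{L^{2p/(p-2)}(T)}\le \|w_T\|_{L^\infty(T)}^{2/p}\|w_T\|_{L^2(T)}^{1-2/p}$. The $L^\infty$ endpoint is handled by $w_T^2\le\|v_h\|_{L^\infty(T)}^2$ via partition of unity on $T$, while the $L^2$ endpoint follows from the local mass-matrix identity $\int_T w_T^2\diff{x}=\tfrac{|T|}{3}\sum_j v_j^2$ combined with the elementary bound $\sum_i v_i^2\le\tfrac{3}{|T|}\|v_h\|_{L^2(T)}^2 + \tfrac{Ch^2}{|T|}|v_h|_{H^1(T)}^2$, obtained by applying Poincar\'e's inequality on $T$ to $v_h$ minus its mean. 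A second H\"older across the triangles (exponents $p/2$ and $p/(p-2)$) together with Weitzenb\"ock's inequality \eqref{eq:Weitzenb} to extract the factor $h^2$ then produces the asserted estimate, including the mixed term $2h|v_h|_{H^1(\D)}+\|v_h\|_{L^2(\D)}$.

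For the Sobolev--Slobodeckij case $f\in W^{s,2}(\D)$, I would revisit the per-term variance via the iid-copy identity
\begin{align*}
  \mathrm{var}\Big(\tfrac{|T|}{3} f(Y_{T,j})\Big)
  = \tfrac{1}{2}\int_T\!\!\int_T (f(x)-f(z))^2\varphi_j(x)\varphi_j(z)\diff{x}\diff{z},
\end{align*}
then bound $\varphi_j(z)\le 1$ and insert $(f(x)-f(z))^2\le h^{2+2s}|x-z|^{-2-2s}(f(x)-f(z))^2$ valid for $x,z\in T$, and finally sum over $j$ via $\sum_j v_j^2\varphi_j(x)\varphi_j(z)\le\|v_h\|_{L^\infty(\D)}^2$ and over $T$, mirroring the argument used in the proof of Lemma~\ref{lem:QMC}. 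The main obstacle will be the $L^p$ interpolation of the preceding paragraph: orchestrating the two rounds of H\"older (per triangle and across triangles) together with the $L^\infty$--$L^2$ interpolation of $w_T$ so that the exponents on $\|v_h\|_{L^\infty(\D)}$, $\|f\|_{L^p(\D)}$, and the combined $L^2$/$H^1$ term for $v_h$ align with the claim uniformly in $p\in[2,\infty]$.
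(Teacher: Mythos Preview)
Your treatment of unbiasedness, the variance decomposition via independence, and the $W^{s,2}$ case are essentially identical to the paper's proof (your use of the iid-copy identity even yields a slightly sharper per-term constant than the paper's Cauchy--Schwarz step).

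The genuinely different part is the $L^p$ estimate. The paper does \emph{not} introduce $w_T^2=\sum_j v_j^2\varphi_j$ or invoke a local Poincar\'e inequality. Instead it defines the piecewise-constant function $v_h^\circ$ taking the barycentric value $v_T=v_h(z_T)$ on each $T$, and uses the elementary pointwise bound $|v_j-v_T|\le h\,|\nabla v_h(z_T)|$ (exact, since $v_h$ is affine on $T$). This allows the interpolation to be done directly on the coefficients:
\[
  v_j^2 \le \|v_h\|_{L^\infty(\D)}^{4/p}\bigl(h\,|\nabla v_h(z_T)|+|v_T|\bigr)^{2-4/p},
\]
where the second factor is constant on $T$. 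After summing via $\sum_j\varphi_j\le 1$, a \emph{single} H\"older on $\D$ (not two rounds) finishes the argument, and the estimate $\|v_h^\circ\|_{L^2}\le h|v_h|_{H^1}+\|v_h\|_{L^2}$ produces the stated constants $\tfrac{1}{\sqrt[4]{12}}$ and $2h|v_h|_{H^1}+\|v_h\|_{L^2}$ exactly.

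Your route via $w_T$, local mass matrix, Poincar\'e--Wirtinger, and two rounds of H\"older is correct in spirit and will yield a bound of the same order. However, it carries an unspecified Poincar\'e constant in place of the factor $2$, so it will not recover the explicit constants in the statement as written. The ``obstacle'' you flag is not a genuine gap---the bookkeeping does close---but the paper's $v_h^\circ$ trick is simply more direct: it exploits that $\nabla v_h$ is piecewise constant to replace your Poincar\'e step by an exact one-line computation, and it collapses the two H\"older applications into one.
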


\begin{proof}
  Let $v_h = \sum_{j = 1}^{N_h} v_j \varphi_j \in S_h$ be arbitrary with
  coefficients $(v_j)_{j = 1}^{N_h} \subset \R$. Due to $Y_{T,j} \sim
  \frac{3}{|T|}\varphi_j(z) \one_T(z) \diff{z}$ for every 
  $T \in \mathcal{T}_h$ we have
  \begin{align*}
    \sum_{j = 1}^{N_h} v_j \E \Big[ \frac{ |T|}{3}  f(Y_{T,j}) \Big]
    = \sum_{j = 1}^{N_h} v_j  \frac{|T|}{3} \int_T f(z)\varphi_j(z)
    \frac{3}{|T|} \diff{z} 
    = \int_T f(z) v_h(z) \diff{z}.
  \end{align*}
  Then, the first assertion follows by summing over all triangles of the
  triangulation.

  Now, let $f \in L^2(\D)$ be arbitrary.
  In the same way as in the proof of Lemma~\ref{lem:QMC},
  the mean-square error is shown to be equal to 
  \begin{align*}
    &\Big\| \int_\D f(x) v_h(x) \diff{x} - F_{IS}(v_h)
    \Big\|_{L^2(\Omega)}^2\\ 
    &\quad =  \sum_{T \in \mathcal{T}_h}
    \sum_{\substack{j = 1\\T \cap \mathrm{supp}(\varphi_j)
    \neq \emptyset}}^{N_h} v_j^2\E \Big[ \Big|
    \int_T f(x)\varphi_j(x) \diff{x} - \frac{|T|}{3} f(Y_{T,j}) \Big|^2 \Big],
  \end{align*}
  due to the independence of the random variables $(Y_{T,j})_{T \in
  \mathcal{T}_h, j \in \{1,\ldots,N_h\}}$.
  
  Then, for every $j\in \{1,\ldots,N_h\}$ and $T \in \mathcal{T}_h$ with $ T
  \cap \mathrm{supp}(\varphi_j)\neq \emptyset$ 
  we make use of $Y_{T,j} \sim \frac{3}{|T|} \one_T(z)\varphi_j(z) \diff{z}$
  and the Cauchy--Schwarz inequality. This yields
  \begin{align*}
    &\E \Big[ \Big| \int_T f(x)\varphi_j(x) \diff{x} - \frac{|T|}{3} f(Y_{T,j})
    \Big|^2 \Big]\\ 
    &\quad = \frac{3}{|T|}\int_T \Big|\int_T f(x)\varphi_j(x)
    \diff{x} - \frac{|T|}{3}  f(z) \Big|^2 \varphi_j(z) \diff{z} \\
    &\quad = \frac{3}{|T|}\int_T \Big|\int_T
    (f(x)-f(z))\varphi_j(x) \diff{x} \Big|^2\varphi_j(z) \diff{z} \\
    &\quad \leq \int_T\int_T (f(x)-f(z))^2\varphi_j(x)\varphi_j(z)
    \diff{x}\diff{z}\\
    &\quad = \frac{2}{3} |T| \int_T |f(x)|^2 \varphi_j(x) \diff{x}
    - 2 \Big( \int_T f(x) \varphi_j(x) \diff{x} \Big)^2.
  \end{align*}
  We neglect the last term and insert this estimate into the mean-square error.
  An application of Weitzenb\"ock's inequality \eqref{eq:Weitzenb}
  then yields
  \begin{align}
    \label{eq:step1}
    \begin{split}
    &\Big\| \int_\D f(x)v_h(x) \diff{x} - F_{IS}(v_h)  \Big\|_{L^2(\Omega)}^2\\
    &\quad = \sum_{T \in \mathcal{T}_h}
    \sum_{\substack{j = 1\\T \cap \mathrm{supp}(\varphi_j)
    \neq \emptyset}}^{N_h} v_j^2
    \E \Big[ \Big|
    \int_T f(x)\varphi_j(x) \diff{x} - \frac{|T|}{3} f(Y_{T,j}) \Big|^2 \Big]\\
    &\quad \le  \frac{2}{3} \sum_{T \in \mathcal{T}_h}
    \sum_{\substack{j = 1\\T \cap \mathrm{supp}(\varphi_j)
    \neq \emptyset}}^{N_h} v_j^2 |T| \int_T |  f(x) |^2 \varphi_j(x) 
    \diff{x}\\
    &\quad \le  \frac{1}{2\sqrt{3}} h^2
    \sum_{T \in \mathcal{T}_h}
    \sum_{\substack{j = 1\\T \cap \mathrm{supp}(\varphi_j)
    \neq \emptyset}}^{N_h} v_j^2 \int_T |  f(x) |^2 \varphi_j(x) 
    \diff{x}.
    \end{split}
  \end{align}
  Now, we assume that $f \in L^p(\D)$ with $p \in [2,\infty]$.
  To every $v_h = \sum_{j = 1}^{N_h} v_j \varphi_j \in
  S_h$ we then associate a mapping ${v}^\circ_h \colon \D \to 
  \R$ defined by $v^\circ_h(x) = \sum_{T \in
  \mathcal{T}_h} v_T \one_T(x)$, where $v_T := v_h(z_T)$ and $z_T \in T$
  denotes the barycenter of $T \in \mathcal{T}_h$. Observe that $v^\circ_h$ is
  piecewise constant on each triangle. 

  For every $T \in
  \mathcal{T}_h$ and $j \in \{1,\ldots,N_h\}$ with $T \cap
  \mathrm{supp}(\varphi_j) \neq \emptyset$
  let $z_j \in \overline{T}$ be the uniquely determined node, 
  which satisfies $\varphi_j(z_j)=1$. Clearly, it holds $|z_j - z_T| \le h$.
  Since $v_h$ is affine linear we obtain  that
  \begin{align*}
    |v_j - v_T| = |v_h(z_j) - v_h(z_T)| \le | \nabla v_h(z_T)| h.
  \end{align*}
  Then, we continue the estimate of the mean-square error in \eqref{eq:step1} 
  by adding and subtracting the coefficients of $v_h^\circ$ as follows:
  For $\rho = \frac{p}{2} \in [1,\infty]$ let $\rho' = \frac{p}{p-2} \in
  [1,\infty]$ be the conjugated H\"older exponent determined by
  $\frac{1}{\rho} + \frac{1}{\rho'} = 1$, where we set $\frac{1}{\infty} = 0$.
  Then, we get 
  \begin{align*}
    v_j^2 &= |v_j|^{\frac{2}{\rho}} |v_j|^{\frac{2}{\rho'}}
    \le \max_i |v_i|^{ \frac{2}{\rho}}  \big( |v_j - v_T| + |v_T|
    \big)^{\frac{2}{\rho'}}\\
    &\le  \| v_h \|_{L^\infty(\D)}^{\frac{2}{\rho}} 
    \big( |\nabla v_h(z_T)| h + |v_T| \big)^{\frac{2}{\rho'}}.
  \end{align*}
  After inserting this into \eqref{eq:step1} we obtain
  \begin{align*}
    &\Big\| \int_\D f(x) v_h(x) \diff{x} - F_{IS}(v_h)  \Big\|_{L^2(\Omega)}^2\\
    &\quad \le  \frac{1}{2\sqrt{3}} h^2 \| v_h
    \|_{L^\infty(\D)}^{\frac{2}{\rho}}
    \sum_{T \in \mathcal{T}_h}
    \sum_{\substack{j = 1\\T \cap \mathrm{supp}(\varphi_j)
    \neq \emptyset}}^{N_h} \big( |\nabla v_h(z_T)| h + |v_T|
    \big)^{\frac{2}{\rho'}}
    \int_T |  f(x) |^2 \varphi_j(x) \diff{x}\\
    &\quad \le  \frac{1}{2\sqrt{3}} h^2 \| v_h
    \|_{L^\infty(\D)}^{\frac{2}{\rho}}
    \int_\D \big( |\nabla v_h(x)| h + |v_h^\circ(x)|  \big)^{\frac{2}{\rho'}}
    |  f(x) |^2 \diff{x},
  \end{align*}
  since $\nabla v_h$ and $v_h^\circ$ are constant on each $T$. In addition,
  we also made use of
  \begin{align}
    \label{eq:partion_1}
    0 \le \sum_{j = 1}^{N_h} \varphi_{j}(x) \le 1
  \end{align}
  for every $x \in \overline{\D}$.

  Therefore, applications of H\"older's inequality and Minkowski's inequality
  yield
  \begin{align}
    \label{eq:step3}
    \begin{split}
    &\Big\| \int_\D f(x) v_h(x) \diff{x} - F_{IS}(v_h)
    \Big\|_{L^2(\Omega)}^2\\
    &\quad \le \frac{1}{2\sqrt{3}} h^2 \| v_h
    \|_{L^\infty(\D)}^{\frac{2}{\rho}}
    \| f \|_{L^p(\Omega)}^{2} 
    \Big( \int_\D \big( |\nabla v_h(x)| h + |v_h^\circ(x)|  \big)^{2}
    \diff{x} \Big)^{\frac{1}{\rho'}}\\
    &\quad \le \frac{1}{2\sqrt{3}} h^2 \| v_h
    \|_{L^\infty(\D)}^{\frac{2}{\rho}}
    \| f \|_{L^p(\D)}^{2} 
    \big(  h |v_h|_{H^1(\D)} + \|v_h^\circ\|_{L^2(\D)}
    \big)^{\frac{2}{\rho'}}.
    \end{split}
  \end{align}
  Finally, we observe that
  \begin{align*}
    \| v_h - v_h^\circ \|_{L^2(\D)}^2
    &= \sum_{T \in \mathcal{T}_h} \int_T | v_h(x) - v_T|^2 \diff{x}\\
    &= \sum_{T \in \mathcal{T}_h} \int_T | \nabla v_h(x) \cdot (x - z_T) |^2
    \diff{x}
    \le h^2 | v_h |_{H^1(\D)}^2
  \end{align*}
  since $|x - z_T|\le h$ for every $x \in T$ and $\nabla v_h$ is piecewise
  constant on $T$. Consequently, 
  \begin{align*}
    \|v_h^\circ\|_{L^2(\D)} 
    \le \|v_h^\circ - v_h\|_{L^2(\D)}  + \|v_h\|_{L^2(\D)} 
    \le h | v_h |_{H^1(\D)} + \|v_h\|_{L^2(\D)}.
  \end{align*}
  Inserting this into \eqref{eq:step3} then completes the
  proof of the second assertion.

  To prove the third assertion let $f \in
  W^{s,2}(\D)$, $s\in (0,1)$. As above we have 
  \begin{align*}
    &\Big\| \int_\D f(x) v_h(x) \diff{x} - F_{IS}(v_h)
    \Big\|_{L^2(\Omega)}^2\\
    &\quad \le \sum_{T \in \mathcal{T}_h}
    \sum_{\substack{j = 1\\T \cap \mathrm{supp}(\varphi_j)
    \neq \emptyset}}^{N_h} v_j^2
    \int_T\int_T (f(x)-f(z))^2\varphi_j(x)\varphi_j(z)
    \diff{x}\diff{z}\\
    &\quad \le \max_{i} |v_i|^2 
    \sum_{T \in \mathcal{T}_h}
    \int_T \int_T (f(x)-f(z))^2 \diff{x}\diff{z},
  \end{align*}
  where we also used that $\varphi_j(z) \le 1$ for all $z \in T$ and
  \eqref{eq:partion_1}. Moreover, since $f \in W^{s,2}(\D)$ we get
  \begin{align*}
    \sum_{T \in \mathcal{T}_h} \int_T \int_T (f(x)-f(z))^2 \diff{x}\diff{z}
    &\le h^{2(1 + s)} \sum_{T \in \mathcal{T}_h}
    \int_T \int_T \frac{ |f(x)-f(z)|^2}{|x-z|^{2 + 2s}}
    \diff{x}\diff{z}\\
    &\le h^{2(1 + s)} |f|_{W^{s,2}(\D)}^2.
  \end{align*}
  Altogether, this completes the proof of the third assertion.
\end{proof}

The well-posedness of \eqref{eq:FIS} is a consequence of Lemma~\ref{lem:QMCIS}.
The following lemma contains some further estimates of $F_{IS}$ provided the
family of triangulations satisfies Assumption~\ref{as:triangulation}.

\begin{corollary} 
  \label{cor:wellposedness}
  Suppose that $f \in L^2(\D)$. 
  Let $(\mathcal{T}_{h})_{h \in (0,1]}$ be a family of triangulations
  satisfying Assumption~\ref{as:triangulation}. Then, 
  there exists $C \in (0,\infty)$ independent of $\mathcal{T}_h$ such that
  \begin{align*}
   |F_{IS}(v_h)| &\le C \ell_h^{\frac{1}{2}}
    \bar{F}_{IS,h} |v_h|_{H^1(\D)} < \infty \quad \text{$\P$-a.s.},\\
    \| F_{IS}(v_h) \|_{L^2(\Omega)} &\le C \|f \|_{L^2(\D)} |v_h|_{H^1(\D)} ,  
  \end{align*}
  for all $v_h \in S_h$, where $\ell_h = \max( 1 , \log(1/h))$ and 
  $\bar{F}_{IS,h} \colon \Omega \to \R$ is defined as
  \begin{align*}   
    \bar{F}_{IS,h} := \frac{1}{3} \sum_{T \in \mathcal{T}_h} |T|
    \sum_{\substack{j = 1 \\ T \cap \mathrm{supp}(\varphi_j) \neq
    \emptyset}}^{N_h} |f(Y_{T,j})|.
  \end{align*}   
\end{corollary}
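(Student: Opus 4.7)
The plan is to prove the two bounds separately, treating the almost sure estimate first since it follows directly from the definition of $F_{IS}$ and the maximum norm estimate \eqref{eq:maxnorm}. Write $v_h = \sum_{j=1}^{N_h} v_j \varphi_j$ with $v_j = v_h(z_j)$, so that in particular $|v_j| \le \|v_h\|_{L^\infty(\D)}$ for every $j$. Pulling this uniform bound out of the double sum defining $F_{IS}(v_h)$ gives, almost surely,
\begin{align*}
  |F_{IS}(v_h)| \le \|v_h\|_{L^\infty(\D)} \cdot \frac{1}{3} \sum_{T \in \mathcal{T}_h} |T| \sum_{\substack{j=1 \\ T \cap \mathrm{supp}(\varphi_j)\neq \emptyset}}^{N_h} |f(Y_{T,j})| = \|v_h\|_{L^\infty(\D)}\, \bar{F}_{IS,h}.
\end{align*}
An application of the discrete maximum norm estimate \eqref{eq:maxnorm} then yields $\|v_h\|_{L^\infty(\D)} \le C \ell_h^{1/2} |v_h|_{H^1(\D)}$, giving the first claimed inequality. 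The almost-sure finiteness of $\bar{F}_{IS,h}$ follows because each $f(Y_{T,j})$ is integrable: the density $p_{T,j}$ is bounded above by $3/|T|$ on $T$, whence $\E[|f(Y_{T,j})|] \le 3|T|^{-1/2}\|f\|_{L^2(T)}$, and $\bar{F}_{IS,h}$ is a finite sum of such terms.

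For the $L^2(\Omega)$ estimate, I would use the bias--variance decomposition
\begin{align*}
  \|F_{IS}(v_h)\|_{L^2(\Omega)}^2 = \bigl\|F_{IS}(v_h) - \E[F_{IS}(v_h)]\bigr\|_{L^2(\Omega)}^2 + \bigl(\E[F_{IS}(v_h)]\bigr)^2,
\end{align*}
and bound the two contributions individually. The unbiasedness part of Lemma~\ref{lem:QMCIS} identifies $\E[F_{IS}(v_h)] = \int_\D f(x)v_h(x)\diff{x}$, which by the Cauchy--Schwarz inequality and the Poincar\'e inequality on $H^1_0(\D)$ is bounded in absolute value by $C \|f\|_{L^2(\D)} |v_h|_{H^1(\D)}$. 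For the variance, apply the second estimate of Lemma~\ref{lem:QMCIS} in the borderline case $p = 2$, for which the factor $(2h|v_h|_{H^1(\D)} + \|v_h\|_{L^2(\D)})^{1-2/p}$ reduces to $1$ and the $L^\infty$ factor appears to first power, yielding
\begin{align*}
  \bigl\|F_{IS}(v_h) - \E[F_{IS}(v_h)]\bigr\|_{L^2(\Omega)} \le \tfrac{1}{\sqrt[4]{12}}\, h\, \|v_h\|_{L^\infty(\D)}\, \|f\|_{L^2(\D)}.
\end{align*}

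The main work then is to absorb the $\|v_h\|_{L^\infty(\D)}$ factor and the logarithmic loss. Invoking \eqref{eq:maxnorm} once more gives $h \|v_h\|_{L^\infty(\D)} \le C h \ell_h^{1/2} |v_h|_{H^1(\D)}$, and since $\sup_{h\in(0,1]} h\ell_h^{1/2} = \sup_{h\in(0,1]} h\,\max(1,\log(1/h))^{1/2} < \infty$, the variance term is also bounded by $C\|f\|_{L^2(\D)}|v_h|_{H^1(\D)}$. Combining both bounds and taking square roots produces the desired estimate. The only subtle point is making sure that this absorption of the $\ell_h^{1/2}$-loss is justified by the uniform boundedness of $h\ell_h^{1/2}$ on $(0,1]$; everything else is a direct invocation of results already established in Lemma~\ref{lem:QMCIS} and the preliminary bounds from Section~\ref{sec:notation}.
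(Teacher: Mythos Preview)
Your proof is correct and follows essentially the same route as the paper: the almost sure bound via $|v_j|\le\|v_h\|_{L^\infty(\D)}$ and \eqref{eq:maxnorm}, and the $L^2(\Omega)$ bound via the bias--variance split together with Lemma~\ref{lem:QMCIS} at $p=2$, the maximum norm estimate, and the uniform boundedness of $h\ell_h^{1/2}$ on $(0,1]$. The only cosmetic difference is in the finiteness of $\bar{F}_{IS,h}$: the paper computes $\E[\bar{F}_{IS,h}]$ directly using $\sum_j\varphi_j\le 1$ to obtain $\E[\bar{F}_{IS,h}]\le\|f\|_{L^1(\D)}$, whereas you bound each summand via the density bound $p_{T,j}\le 3/|T|$; both arguments are valid.
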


\begin{proof}
  We only verify the almost sure bound for $F_{IS}(v_h)$. The estimate 
  of the $L^2(\Omega)$-norm then follows from Lemma~\ref{lem:QMCIS} and the
  same arguments as in the proof Lemma~\ref{lem:wellposedness}.

  By the definition of $F_{IS}$ and an application of \eqref{eq:maxnorm} we
  have that 
  \begin{align*}
    | F_{IS}(v_h) | 
    &\le \frac{1}{3} \sum_{T \in \mathcal{T}_h} |T| 
    \sum_{\substack{j = 1\\ T \cap \mathrm{supp}(\varphi_j) \neq
    \emptyset}}^{N_h} |v_j| |f(Y_{T,j})|\\
    &\le  \frac{1}{3} \| v_h \|_{L^\infty(\D)} 
    \sum_{T \in \mathcal{T}_h} |T| 
    \sum_{\substack{j = 1\\ T \cap \mathrm{supp}(\varphi_j) \neq
    \emptyset}}^{N_h}  |f(Y_{T,j})|\\
    &\le C \ell_h^{\frac{1}{2}} | v_h |_{H^1(\D)} \bar{F}_{IS,h}.
  \end{align*}
  It remains to show that $\bar{F}_{IS,h}$ is bounded $\mathbb{P}$-almost
  surely. But this follows immediately from
  \begin{align*}
    \E \big[ \bar{F}_{IS,h} \big]
    &= \frac{1}{3} \sum_{T \in \mathcal{T}_h} |T|
    \sum_{\substack{j = 1 \\ T \cap \mathrm{supp}(\varphi_j) \neq
    \emptyset}}^{N_h} \E \big[ |f(Y_{T,j})| \big]\\
    &=  \sum_{T \in \mathcal{T}_h} 
    \sum_{\substack{j = 1 \\ T \cap \mathrm{supp}(\varphi_j) \neq
    \emptyset}}^{N_h}  \int_T |f(y)| \varphi_j(y) \diff{y}\\
    &\le \int_\D |f(y)| \diff{y} < \infty,
  \end{align*}
  where we used that $\sum_{j = 1}^{N_h} \varphi_j(y) \le 1$ for every $y \in
  \D$. In turn, this implies $\bar{F}_{IS,h}< \infty$ $\mathbb{P}$-almost
  surely. 
\end{proof}

Next, we introduce the finite element problem based on the importance sampling
estimator. In terms of $F_{IS}$ the problem is stated as follows:
\begin{align}
  \label{eq:discProbis}
  \begin{cases}
    \text{Find } u_h^{IS} \colon \Omega \to S_h \text{ such that $\P$-almost
    surely} \\ 
    a(u_h^{IS},v_h) = F_{IS}(v_h) \text{ for all } v_h \in S_h.
  \end{cases}
\end{align}

In the same way as in Theorem~\ref{thm:existence} one shows that 
the discrete problem \eqref{eq:discProbis} has a uniquely determined solution
$u_h^{IS} \colon \Omega \to S_h$.

\begin{theorem}
  \label{thm:existenceis}
  For every admissible triangulation $\mathcal{T}_h$, $h \in (0,1]$, there
  exists a uniquely determined measurable mapping $u_h^{IS}
  \colon \Omega \to S_h$
  which solves the discrete problem \eqref{eq:discProb}. In addition, there
  exists $C \in (0,\infty)$ independent of $\mathcal{T}_h$ such that
  \begin{align*}
    \big|u_h^{IS}\big|_{H^1(\D)} 
    &\le C \ell_h^{\frac{1}{2}} \bar{F}_{IS,h} \quad \text{$\P$-a.s.,}
  \end{align*}
  where $\ell_h = \max(1,\log(1/h))$.
\end{theorem}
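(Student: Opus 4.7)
The plan is to mirror the argument used for Theorem~\ref{thm:existence}, the only conceptual differences being that here the bilinear form $a$ is deterministic (since $\sigma\equiv 1$), while the randomness is entirely carried by the right-hand side $F_{IS}$. So the proof decomposes into three familiar tasks: pointwise application of Lax--Milgram, measurability of the selection, and the stability estimate.

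First I would fix an admissible $\mathcal{T}_h$ and observe that for the Poisson problem the form $a(v_h,w_h)=\int_\D \nabla v_h\cdot\nabla w_h\,\mathrm{d}x$ is, independently of $\omega\in\Omega$, bounded and coercive on $S_h$ with respect to $|\cdot|_{H^1(\D)}$ (coercivity constant $1$, boundedness constant $1$). By Corollary~\ref{cor:wellposedness}, there is a measurable set $N\subset\Omega$ of $\P$-measure zero outside of which $\bar{F}_{IS,h}(\omega)<\infty$, and on $\Omega\setminus N$ the map $S_h\ni v_h\mapsto F_{IS}(v_h)(\omega)\in\R$ is a bounded linear functional on $(S_h,|\cdot|_{H^1(\D)})$. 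As in the proof of Theorem~\ref{thm:existence}, the exceptional set can be chosen uniformly in $v_h$ because $F_{IS}$ is linear in $v_h$ with random coefficients depending only on the finitely many samples $(Y_{T,j})_{T,j}$. Applying the lemma of Lax--Milgram pointwise on $\Omega\setminus N$ yields a uniquely determined $u_h^{IS}(\omega)\in S_h$ satisfying
\begin{align*}
  a(u_h^{IS}(\omega),v_h)=F_{IS}(v_h)(\omega)\quad\text{for all }v_h\in S_h,
\end{align*}
and we set $u_h^{IS}(\omega)=0$ for $\omega\in N$.

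Next I would establish measurability of $\omega\mapsto u_h^{IS}(\omega)$ exactly as in Theorem~\ref{thm:existence}: fix a basis $(\psi_j)_{j=1}^{N_h}$ of $S_h$, define $g\colon\Omega\times\R^{N_h}\to\R^{N_h}$ by
\begin{align*}
  g(\omega,v)=\Big[\,a\Big(\sum_{i=1}^{N_h}v_i\psi_i,\psi_j\Big)-F_{IS}(\psi_j)(\omega)\Big]_{j=1}^{N_h},
\end{align*}
note that $g$ is a Carath\'eodory map (continuous, in fact affine, in $v$ and measurable in $\omega$ by construction of $F_{IS}$ from the measurable sampling variables $Y_{T,j}$), and invoke \cite[Lemma~4.3]{eisenmann2017} to deduce that its unique zero is a measurable $\R^{N_h}$-valued random variable; expanding in the basis then produces the desired $S_h$-valued random variable.

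Finally, for the stability estimate, I would test the equation against $u_h^{IS}$ itself: on $\Omega\setminus N$,
\begin{align*}
  |u_h^{IS}|_{H^1(\D)}^2 = a(u_h^{IS},u_h^{IS}) = F_{IS}(u_h^{IS})\le C\ell_h^{1/2}\bar{F}_{IS,h}\,|u_h^{IS}|_{H^1(\D)},
\end{align*}
where the last inequality is the $\P$-a.s. bound from Corollary~\ref{cor:wellposedness}. Cancelling one factor of $|u_h^{IS}|_{H^1(\D)}$ yields the claim. The only mildly delicate step is the measurability argument, but it is a direct transcription of the one in Theorem~\ref{thm:existence}; coercivity is actually easier here because $\sigma\equiv 1$ removes any randomness from the bilinear form.
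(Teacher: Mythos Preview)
Your proposal is correct and follows exactly the approach the paper intends: the paper gives no separate proof for Theorem~\ref{thm:existenceis} but simply remarks that it is shown ``in the same way as in Theorem~\ref{thm:existence}'', and your write-up is precisely that transcription, with the appropriate simplification that $a$ is deterministic and the appeal to Corollary~\ref{cor:wellposedness} for the almost sure bound on $F_{IS}$.
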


The following theorem contains an estimate of the total error of the
approximation $u_h^{IS}$ with respect to the $L^2(\Omega;H^1_0(\D))$-norm.

\begin{theorem}
  \label{thm:errorISH1}
  Let Assumptions~\ref{as:reg} and \ref{as:triangulation} be satisfied. If $f
  \in L^p(\D)$, $p \in (2,\infty]$, then there exists $C \in (0,\infty)$ such
  that for every $h \in (0,1]$ 
  \begin{align*}
    \big\| u_h^{IS} - u \big\|_{L^2(\Omega;H^1_0(\D))} 
    &\le C h \|u \|_{H^2(\D)} + C \ell_h^{\frac{1}{2} + \frac{1}{p}}
    h^{1 - \frac{2}{p}} \| f \|_{L^p(\D)},
  \end{align*}
  where $\ell_h = \max( 1 , \log(1/h))$.
\end{theorem}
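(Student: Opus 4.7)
The plan is to follow the blueprint of Theorem~\ref{thm:errorH1}, simplified by the fact that $\sigma\equiv1$ makes the bilinear form $a$ deterministic, and modified by using Lemma~\ref{lem:QMCIS} in place of Lemma~\ref{lem:QMC} to estimate the load-vector error. First I would split the error as $u_h^{IS}-u=\theta+\rho$ with $\theta:=u_h^{IS}-R_hu\in S_h$ and $\rho:=R_hu-u$. The Ritz estimate \eqref{eq:RitzH1} handles $\rho$ directly, giving $|\rho|_{H^1(\D)}\le Ch\|u\|_{H^2(\D)}$. For $\theta$, subtracting the Galerkin identity $a(R_hu,v_h)=a(u,v_h)=F(v_h)$ from \eqref{eq:discProbis} yields the exact relation
\begin{align*}
  a(\theta,v_h)=F_{IS}(v_h)-F(v_h)\quad\text{for all }v_h\in S_h,\ \P\text{-a.s.}
\end{align*}
Testing with $v_h=\theta$ and taking expectations gives $\|\theta\|_{L^2(\Omega;H^1_0(\D))}^2=\E[F_{IS}(\theta)-F(\theta)]$, so everything reduces to a sharp bound on $|\E[F_{IS}(v_h)-F(v_h)]|$ for random $v_h\in L^2(\Omega;H^1_0(\D))$ taking values in $S_h$.

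The core step is to establish the analogue of Lemma~\ref{lem:FMC}, namely that
\begin{align*}
  \bigl|\E[F_{IS}(v_h)-F(v_h)]\bigr|\le C\ell_h^{\frac12+\frac1p}h^{1-\frac2p}\|f\|_{L^p(\D)}\|v_h\|_{L^2(\Omega;H^1_0(\D))},
\end{align*}
with $1/p=0$ when $p=\infty$. I would expand $v_h=\sum_{j=1}^{N_h}v_j\psi_j$ in the $a$-orthonormal discrete eigenbasis $(\psi_j)$ from \eqref{eq:eigval} and use Cauchy--Schwarz to obtain
\begin{align*}
  \bigl|\E[F_{IS}(v_h)-F(v_h)]\bigr|\le\Bigl(\sum_j\lambda_{h,j}\E[v_j^2]\Bigr)^{\!1/2}\Bigl(\sum_j\lambda_{h,j}^{-1}\E\bigl[|F_{IS}(\psi_j)-F(\psi_j)|^2\bigr]\Bigr)^{\!1/2}\!.
\end{align*}
Since $\sigma\equiv1$, the first factor is exactly $\|v_h\|_{L^2(\Omega;H^1_0(\D))}$ (via $a(v_h,v_h)=\sum_j\lambda_{h,j}v_j^2$). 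For the per-eigenfunction contribution, I would feed $\psi_j$ into Lemma~\ref{lem:QMCIS}, using $\|\psi_j\|_{L^2(\D)}=1$, $|\psi_j|_{H^1(\D)}=\sqrt{\lambda_{h,j}}$, and the maximum-norm estimate \eqref{eq:maxnorm} in the form $\|\psi_j\|_{L^\infty(\D)}\le C\ell_h^{1/2}\sqrt{\lambda_{h,j}}$, obtaining
\begin{align*}
  \E\bigl[|F_{IS}(\psi_j)-F(\psi_j)|^2\bigr]\le Ch^2\ell_h^{2/p}\lambda_{h,j}^{2/p}\|f\|_{L^p(\D)}^2\bigl(1+h\sqrt{\lambda_{h,j}}\bigr)^{2-4/p}\!.
\end{align*}
The inverse estimate \eqref{eq:inverse} applied to $\psi_{N_h}$ yields $\lambda_{h,N_h}\le Ch^{-2}$, so the last factor is uniformly bounded. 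Summing and invoking \eqref{eq:sumlam-1} to handle $\sum_j\lambda_{h,j}^{-1+2/p}$ produces the desired bound, with the logarithmic contribution $\ell_h$ entering both through \eqref{eq:maxnorm} and (when $p=\infty$) through \eqref{eq:sumlam-1}.

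Finally, putting $v_h=\theta$ into this estimate gives $\|\theta\|_{L^2(\Omega;H^1_0)}^2\le C\ell_h^{1/2+1/p}h^{1-2/p}\|f\|_{L^p(\D)}\|\theta\|_{L^2(\Omega;H^1_0)}$, and after cancellation and combination with the bound on $\rho$ via the triangle inequality, the theorem follows. The main obstacle is the bookkeeping in the adaptation of Lemma~\ref{lem:FMC}: Lemma~\ref{lem:QMCIS} has a more delicate structure than Lemma~\ref{lem:QMC}, with the factor $\|v_h\|_{L^\infty}^{2/p}(h|v_h|_{H^1}+\|v_h\|_{L^2})^{1-2/p}$ producing $j$-dependent quantities that must be reabsorbed into powers of $\lambda_{h,j}$ via \eqref{eq:maxnorm} and the inverse estimate; correctly tracking the resulting powers of $h$ and $\ell_h$ to arrive at the stated exponents $1-2/p$ and $1/2+1/p$ is the technically most demanding step, whereas the outer decomposition into $\theta$ and $\rho$ proceeds exactly as in Theorem~\ref{thm:errorH1} (minus the $E_2$ term, which vanishes because $a$ is now deterministic).
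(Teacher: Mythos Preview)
Your proposal is correct and follows essentially the same route as the paper: the $\theta+\rho$ split, the identity $a(\theta,v_h)=F_{IS}(v_h)-F(v_h)$, the eigenbasis expansion as in Lemma~\ref{lem:FMC}, and the application of Lemma~\ref{lem:QMCIS} together with \eqref{eq:maxnorm}, \eqref{eq:inverse}, and \eqref{eq:sumlam-1}. One small bookkeeping difference: you retain the $j$-dependence $\|\psi_j\|_{L^\infty}\le C\ell_h^{1/2}\lambda_{h,j}^{1/2}$ and then sum $\sum_j\lambda_{h,j}^{-1+2/p}$, whereas the paper bounds $\|\psi_j\|_{L^\infty}\le C\ell_h^{1/2}h^{-1}$ uniformly in $j$ and sums $\sum_j\lambda_{h,j}^{-1}$; your version in fact yields the sharper logarithmic exponent $\ell_h^{1/p}$ for finite $p$ (and $\ell_h^{1/2}$ for $p=\infty$), which is stronger than the stated $\ell_h^{1/2+1/p}$ and hence more than suffices.
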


\begin{proof}
  As in the proof of Theorem~\ref{thm:errorH1} we split the error into the
  two parts 
  \begin{align*}
    u_h^{IS} - u = u_h^{IS} - R_h u + R_h u - u =: \theta + \rho,
  \end{align*}
  where we recall the definition of the Ritz projector $R_h \colon H^1_0(\D) \to
  S_h$ from Section~\ref{sec:notation}. Since 
  the associated bilinear form $a$ for \eqref{eq:Poisson}
  coincides with the inner product in $H^1_0(\D)$ it follows that
  \begin{align*}
    \big|u_h^{IS} - u \big|^2_{H^1(\D)} 
    &= a( u_h^{IS} - u, u_h^{IS} - u)
    = a( \theta, \theta) + a(\rho, \rho)\\
    &=  |\theta|^2_{H^1(\D)} + |\rho|^2_{H^1(\D)}.
  \end{align*}
  Then, due to \eqref{eq:RitzH1} it holds
  \begin{align*}
    | \rho |_{H^1(\D)} = | R_h u - u|_{H^1(\D)} \le C h \|u\|_{H^2(\D)}.
  \end{align*}
  Further, from the variational formulation of \eqref{eq:Poisson} and
  \eqref{eq:discProbis} we get $\P$-almost surely for every $v_h \in S_h$ that
  \begin{align*}
    a(\theta, v_h) &= a(u_h^{IS}, v_h) - a(R_h u, v_h)\\
    &= F_{IS}(v_h) - F(v_h),
  \end{align*}
  since $a(R_h u, v_h)= a(u,v_h) = F(v_h)$ for every $v_h \in S_h$.
  In particular, for the choice $v_h = \theta(\omega) = u_h^{IS}(\omega) 
  - R_h u \in S_h$ we obtain $\P$-almost surely that
  \begin{align*}
    |\theta|^2_{H^1(\D)} = a(\theta, \theta)
    = F_{IS}(\theta) - F(\theta).
  \end{align*}
  From Corollary~\ref{cor:wellposedness} and Theorem~\ref{thm:existenceis} it
  follows directly that all terms on the right-hand side are integrable with
  respect to $\P$. Hence, after taking expectations it remains to 
  prove an estimate for the term
  \begin{align*}
    E_{IS} &= \big| \E\big[ F_{IS}(\theta) - F(\theta) \big] \big|.
  \end{align*}
  This is accomplished by the same arguments as in the proof of
  Lemma~\ref{lem:FMC}. More precisely, we represent $\theta$ in terms of
  an orthonormal basis $(\psi_j)_{j = 1}^{N_h} \subset S_h$ by
  \begin{align*}
    \theta = \sum_{j = 1}^{N_h} \theta_j \psi_j,
  \end{align*}
  where $\theta_j= (\theta, \psi_j)_{L^2(\D)}$, $j = 1, \ldots, N_h$,
  are real-valued and square-integrable random variables.
  Hereby, we assume again that $(\psi_j)_{j = 1}^{N_h}$ is
  a solution to the discrete eigenvalue problem \eqref{eq:eigval}.
  Then, by the linearity of $F$ and $F_{IS}$ and the Cauchy--Schwarz inequality
  we obtain the estimate
  \begin{align*}
    E_{IS} &= \Big| \E \Big[ \sum_{j = 1}^{N_h} \theta_j \big( F_{IS}(\psi_j)
    - F(\psi_j) \big) \Big] \Big|\\
    &\le \Big(\sum_{j =1}^{N_h} \lambda_{h,j} \E\big[ |\theta_j|^2 \big]
    \Big)^{\frac{1}{2}}
    \Big( \sum_{j =1}^{N_h} \lambda_{h,j}^{-1}
    \E \big[ \big| F_{IS}(\psi_j) - F(\psi_j) \big|^2 \big]
    \Big)^{\frac{1}{2}},
  \end{align*}
  where $(\lambda_{h,j})_{j = 1}^{N_h} \subset (0,\infty)$ denote the
  discrete eigenvalues in \eqref{eq:eigval}.
  Then, as in \eqref{eq:v_hH1} one computes
  \begin{align*}
    \Big(\sum_{j =1}^{N_h} \lambda_{h,j} \E\big[ |\theta_j|^2 \big]
    \Big)^{\frac{1}{2}} = \| \theta \|_{L^2(\Omega;H^1_0(\D))}.
  \end{align*}
  Moreover, since $f \in L^p(\D)$ and $\|\psi_j\|_{L^2(\D)}=1$ 
  it follows from Lemma~\ref{lem:QMCIS} that
  \begin{align*}
    \E \big[ \big| F_{IS}(\psi_j) - F(\psi_j) \big|^2 \big]
    \le \frac{1}{\sqrt{12}} h^2 \|f\|_{L^p(\D)}^{2} 
    \| \psi_j \|^{\frac{4}{p}}_{L^\infty(\D)} 
    \big( 2 h |\psi_j|_{H^1(\D)} + 1 \big)^{2 - \frac{4}{p}}.
  \end{align*}
  Next, we recall from \eqref{eq:maxnorm} and \eqref{eq:inverse}
  that $\| \psi_j \|_{L^\infty(\D)} \le C \ell_h^{\frac{1}{2}}
  |\psi_j|_{H^1(\D)} \le C \ell_h^{\frac{1}{2}} h^{-1}$
  for every $j \in \{1,\ldots,N_h\}$, 
  since $\| \psi_j\|_{L^2(\D)} = 1$. Therefore,
  \begin{align*}
    \E \big[ \big| F_{IS}(\psi_j) - F(\psi_j) \big|^2 \big]
    \le C \ell_h^{\frac{2}{p}} h^{2 - \frac{4}{p}}
    \|f\|_{L^p(\D)}^{2}
  \end{align*}
  for some constant $C \in (0,\infty)$ independent of $h \in (0,1]$
  and $j \in \{1,\ldots,N_h\}$.
  
  Altogether, we have shown that
  \begin{align*}
    E_{IS} \le C \ell_h^{\frac{1}{p}} h^{1 - \frac{2}{p}}
    \|f\|_{L^p(\D)} \| \theta \|_{L^2(\Omega;H^1_0(\D))} 
    \Big( \sum_{j = 1}^{N_h} \lambda_{h,j}^{-1} \Big)^{\frac{1}{2}}.
  \end{align*}
  Together with \eqref{eq:sumlam-1} this completes the proof.
\end{proof}

Finally, we also show an error estimate with respect to the 
norm in $L^2(\Omega;L^2(\D))$.

\begin{theorem}
  \label{thm:errorISL2}
  Let Assumptions~\ref{as:reg} and \ref{as:triangulation} be satisfied. If $f
  \in W^{s,2}(\D)$, $s \in [0,1)$, then there exists $C \in (0,\infty)$ such
  that for every $h \in (0,1]$ 
  \begin{align*}
    \big\| u_h^{IS} - u \big\|_{L^2(\Omega;L^2(\D))} 
    &\le C h^2 \|u \|_{H^2(\D)} + C \ell_h
    h^{1 + s} | f |_{W^{s,2}(\D)},
  \end{align*}
  where $\ell_h = \max( 1 , \log(1/h))$.
\end{theorem}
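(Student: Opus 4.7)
The plan is to combine an Aubin--Nitsche type duality argument with the eigenbasis expansion already used in Lemma~\ref{lem:FMC}, the key point being that the duality step supplies an extra factor $\lambda_{h,k}^{-1}$ compared to the $H^1$-analysis and thus converts the $H^1$-rate into the corresponding $L^2$-rate with one higher power of $h$. First I would split
$u_h^{IS} - u = (R_h u - u) + \theta$, where $\theta := u_h^{IS} - R_h u$. Since $\sigma \equiv 1$ the Ritz projector coincides with the deterministic Galerkin solution operator, so $R_h u - u$ is deterministic and \eqref{eq:RitzL2} immediately gives
$\|R_h u - u\|_{L^2(\Omega;L^2(\D))} = \|R_h u - u\|_{L^2(\D)} \le C h^2 \|u\|_{H^2(\D)}$, which accounts for the first term of the desired estimate. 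For the random remainder, the definitions of $u_h^{IS}$ and $u$ together with $a(R_h u, v_h) = a(u, v_h) = F(v_h)$ yield the Galerkin identity $a(\theta, v_h) = F_{IS}(v_h) - F(v_h)$ for all $v_h \in S_h$ and $\P$-almost every $\omega$.

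Next I would expand $\theta = \sum_{k=1}^{N_h} \theta_k \psi_k$ in the $L^2$-orthonormal eigenbasis $(\psi_k)_{k=1}^{N_h}$ determined by \eqref{eq:eigval} and set $\gamma_k := F_{IS}(\psi_k) - F(\psi_k)$. Testing the Galerkin identity against $v_h = \psi_k$ yields $\lambda_{h,k} \theta_k = \gamma_k$, hence $\theta_k = \lambda_{h,k}^{-1} \gamma_k$. Parseval together with Fubini then produce the crucial identity
\begin{align*}
  \E\big[\|\theta\|_{L^2(\D)}^2\big]
  = \sum_{k=1}^{N_h} \lambda_{h,k}^{-2} \E\big[\gamma_k^2\big],
\end{align*}
which is valid regardless of the (nontrivial) correlations between the $\gamma_k$. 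For $s \in (0,1)$, the third estimate of Lemma~\ref{lem:QMCIS} applied with $v_h = \psi_k$ bounds $\E[\gamma_k^2] \le h^{2(1+s)} \|\psi_k\|_{L^\infty(\D)}^2 |f|_{W^{s,2}(\D)}^2$, and the maximum norm estimate \eqref{eq:maxnorm} combined with $|\psi_k|_{H^1(\D)}^2 = a(\psi_k,\psi_k) = \lambda_{h,k}$ yields $\|\psi_k\|_{L^\infty(\D)}^2 \le C \ell_h \lambda_{h,k}$. Inserting this and bounding $\sum_k \lambda_{h,k}^{-1} \le C \ell_h$ via the case $p = \infty$ of \eqref{eq:sumlam-1}, I would arrive at $\E[\|\theta\|_{L^2(\D)}^2] \le C \ell_h^2 h^{2(1+s)} |f|_{W^{s,2}(\D)}^2$; taking square roots and combining with the Ritz estimate by the triangle inequality completes the proof.

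The borderline value $s = 0$, for which the fractional seminorm in \eqref{eq:fracSobol} is not defined, is the only mildly delicate point. I would treat it by appealing instead to the second estimate of Lemma~\ref{lem:QMCIS} with $p = 2$, which together with $\|\psi_k\|_{L^2(\D)} = 1$ gives the analogous bound $\E[\gamma_k^2] \le C h^2 \|\psi_k\|_{L^\infty(\D)}^2 \|f\|_{L^2(\D)}^2$; the remaining calculation proceeds without change. Apart from this boundary issue, the argument is essentially algebraic bookkeeping and requires no ingredient beyond those already developed for Lemma~\ref{lem:FMC} and Theorem~\ref{thm:errorISH1}.
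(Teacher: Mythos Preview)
Your proof is correct and arrives at exactly the same decisive expression as the paper, namely
\[
  \E\big[\|\theta\|_{L^2(\D)}^2\big] \le \sum_{k=1}^{N_h} \lambda_{h,k}^{-2}\,\E\big[|F_{IS}(\psi_k)-F(\psi_k)|^2\big],
\]
after which both arguments proceed identically via Lemma~\ref{lem:QMCIS}, \eqref{eq:maxnorm}, and \eqref{eq:sumlam-1}. The difference lies in how this expression is reached. The paper follows the traditional Aubin--Nitsche pattern: it introduces an auxiliary discrete problem $a(v_h,w_h)=(\theta,v_h)_{L^2(\D)}$, tests with $v_h=\theta$, takes expectations, expands $w_h$ in the eigenbasis, applies Cauchy--Schwarz with weights $\lambda_{h,j}^{\pm 2}$, and then identifies $\big(\sum_j \lambda_{h,j}^2\E[w_j^2]\big)^{1/2}=\|\theta\|_{L^2(\Omega;L^2(\D))}$ so that one copy of the norm can be cancelled. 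You bypass the auxiliary problem entirely by testing the Galerkin identity $a(\theta,\psi_k)=\gamma_k$ directly against each eigenvector, which immediately yields $\theta_k=\lambda_{h,k}^{-1}\gamma_k$ and, by Parseval, the \emph{equality} $\E[\|\theta\|_{L^2(\D)}^2]=\sum_k\lambda_{h,k}^{-2}\E[\gamma_k^2]$. Your route is shorter and more transparent; the paper's route has the pedagogical merit of making the duality structure explicit and generalizes more readily to situations where the bilinear form and the $L^2$-eigenvalue problem do not diagonalize simultaneously in such a convenient way (e.g.\ nonconstant $\sigma$). Your handling of the border case $s=0$ via the $p=2$ instance of the second estimate in Lemma~\ref{lem:QMCIS} is also fine and matches what the paper leaves to the reader.
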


\begin{proof}
  As in the proof of Theorem~\ref{thm:errorISH1} we again split the error into
  the two parts 
  \begin{align*}
    u_h^{IS} - u = u_h^{IS} - R_h u + R_h u - u =: \theta + \rho.
  \end{align*}
  Then, it follows from \eqref{eq:RitzL2} that
  \begin{align*}
    \| \rho \|_{L^2(\D)}
    = \| (R_h - I) u \|_{L^2(\D)}
    \le C h^2 \| u \|_{H^2(\D)}
  \end{align*}
  for every $h \in (0,1]$.

  In order to give an estimate of the $L^2(\Omega;L^2(\D))$-norm of $\theta$ 
  we apply Nitsche's duality trick. More precisely, we consider the auxiliary
  problem of finding a random mapping $w_h \colon  \Omega \to S_h$ satisfying
  $\P$-almost surely 
  \begin{align}
    \label{eq:dualprob}
    a(v_h,w_h) = (\theta, v_h)_{L^2(\D)}, \quad \text{ for all } v_h \in S_h.
  \end{align}
  Observe that \eqref{eq:dualprob} is a linear
  variational problem with a random right-hand side. The existence of a
  uniquely determined solution $w_h \colon \Omega \to S_h$ can be shown in the
  same way as in the proof of Theorem~\ref{thm:existence}.

  Testing \eqref{eq:dualprob} with $v_h = \theta(\omega) \in S_h$ then gives
  for $\P$-almost every $\omega \in \Omega$ that
  \begin{align*}
    \| \theta(\omega) \|_{L^2(\D)}^2 &= a(\theta(\omega),w_h(\omega))
    = a(u_h^{IS}(\omega), w_h(\omega)) - a(R_h u, w_h(\omega))\\
    &= F_{IS}(w_h(\omega)) - F(w_h(\omega)),
  \end{align*}
  where we also applied \eqref{eq:discProbis}, \eqref{eq:varprob2d}, and
  \eqref{eq:Ritz}. Therefore, we have
  \begin{align*}
    \| \theta \|_{L^2(\Omega;L^2(\D))}^2 
    = \big| \E \big[ F_{IS}(w_h) - F(w_h) \big] \big|.    
  \end{align*}
  Then, as in the proof of Lemma~\ref{lem:FMC} we represent $w_h$ in terms of
  the orthonormal basis $(\psi_j)_{j = 1}^{N_h}$ consisting of discrete 
  eigenfunctions to the eigenvalue problem \eqref{eq:eigval}.
  After inserting this into the $L^2(\Omega;L^2(\D))$-norm of $\theta$,
  an application of the Cauchy--Schwarz inequality yields
  \begin{align*}
    \| \theta \|_{L^2(\Omega;L^2(\D))}^2 
    &= \Big| \E \Big[ \sum_{j = 1}^{N_h} w_j \big( F_{IS}( \psi_j) - F(\psi_j)
    \big) \Big] \Big|\\ 
    &\le \Big(\sum_{j =1}^{N_h} \lambda_{h,j}^2 \E\big[ |w_j|^2 \big]
    \Big)^{\frac{1}{2}}
    \Big( \sum_{j =1}^{N_h} \lambda_{h,j}^{-2}
    \E \big[ \big| F_{IS}(\psi_j) - F(\psi_j) \big|^2 \big]
    \Big)^{\frac{1}{2}},
  \end{align*}
  where $w_j = (\psi_j,w_h)_{L^2(\D)}$, $j \in \{1,\ldots,N_h\}$, and
  $(\lambda_{h,j})_{j = 1}^{N_h} \subset (0,\infty)$ are the
  discrete eigenvalues in \eqref{eq:eigval}.

  Then, it follows from \eqref{eq:eigval}, \eqref{eq:dualprob} 
  and Parseval's identity that
  \begin{align*}
    \Big(\sum_{j =1}^{N_h} \lambda_{h,j}^2 \E\big[ |w_j|^2 \big]
    \Big)^{\frac{1}{2}}
    &= \Big(\sum_{j =1}^{N_h} \E \big[ \big| \lambda_{h,j} (\psi_j, w_h)_{L^2(\D)}
    \big|^2 \big] \Big)^{\frac{1}{2}}\\
    &= \Big(\sum_{j =1}^{N_h} \E \big[ \big| a(\psi_j, w_h) \big|^2 \big]
    \Big)^{\frac{1}{2}}\\
    &= \Big(\sum_{j =1}^{N_h} \E \big[ \big| (\theta,\psi_j)_{L^2(\D)} \big|^2
    \big] \Big)^{\frac{1}{2}}
    = \| \theta \|_{L^2(\Omega;L^2(\D))}.
  \end{align*}
  Hence, this term can be cancelled from both sides of the inequality.

  Furthermore, an application of Lemma~\ref{lem:QMCIS} shows that
  \begin{align*}
    \E \big[ \big| F_{IS}(\psi_j) - F(\psi_j) \big|^2 \big]
    &\le h^{2(1+s)} \| \psi_j \|_{L^\infty(\D)}^2 | f |_{W^{s,2}(\D)}^2. 
  \end{align*}
  After recalling from \eqref{eq:maxnorm} and \eqref{eq:eigval} that
  \begin{align*}
    \| \psi_j \|_{L^\infty(\D)}^2 \le C \ell_h | \psi_j|_{H^1(\D)}^2
    = C \ell_h a(\psi_j,\psi_j) =  C \ell_h \lambda_{h,j}
  \end{align*}
  for every $j\in\{1,\ldots,N_h\}$, we finally arrive at
  \begin{align*}
    \Big( \sum_{j =1}^{N_h} \lambda_{h,j}^{-2}
    \E \big[ \big| F_{IS}(\psi_j) - F(\psi_j) \big|^2 \big]
    \Big)^{\frac{1}{2}}
    &\le C \ell_h^{\frac{1}{2}} h^{1+s} | f |_{W^{s,2}(\D)}
    \Big( \sum_{j = 1}^{N_h} \lambda_{h,j}^{-1} \Big)^{\frac{1}{2}}\\
    &\le C \ell_h h^{1+s} | f |_{W^{s,2}(\D)},
  \end{align*}
  where we also inserted \eqref{eq:sumlam-1} in the last step.
  Altogether, this completes the proof for $s\in (0,1)$. The boarder case $s
  =0$ is proven analogously.
\end{proof}

%
%

\section{Implementation of the randomized quadrature formulas}
\label{sec:sampling}
This section is devoted to a brief instruction on how to implement the
randomized quadrature formulas \eqref{eq:MC} and \eqref{eq:FIS}.

To be more precise, we apply the \emph{general rejection algorithm} to sample
the random variables $Y_{T,j} \sim p_{T,j}(x) \diff{x}$ introduced in
\eqref{eq:pTj} for each element $T \in \mathcal{T}_h$ and $j \in
\{1,\ldots,N_h\}$.  
We briefly review the rejection algorithm in Section~\ref{sec:rejection}.
To simplify its implementation it is convenient to use a change of
coordinates such that the sampling can be done on a fixed reference
triangle. This will be discussed in detail in
Section~\ref{sec:transformation}. In Section~\ref{sec:samplingref} we
then show how the required samples are generated on the reference
triangle using the rejection algorithm. Moreover, Section~\ref{sec:uniform}
briefly considers the uniform sampling of $Z_T \sim \mathcal{U}(T)$ 
on an arbitrary triangle $T \in \mathcal{T}_h$. Finally, in
Section~\ref{sec:FEM} we sketch how the randomized quadrature formula
\eqref{eq:MC} can be embedded into the finite element method.

\subsection{General rejection algorithm}
\label{sec:rejection}

In this subsection we briefly recall the general rejection algorithm
for the simulation of a non-uniformly distributed random variable whose
distribution is given by a probability density function. For more details
on this method we refer to \cite[Chapter~2.3.2]{madras2002}. 

For $d \in \N$ let $p \colon \mathbb{R}^d \to \mathbb{R}$ be a given
probability density function. The goal is to generate samples of a random variable
$X \colon \Omega \to \R^d$ whose distribution is given by $p(x) \diff{x}$.
To this end, we assume that we already know how to generate samples of a random
variable $Z \colon \Omega \to \R^d$ which is distributed according to a further
probability density function $g \colon \R^d \to \R$. Suppose that
there exists $c \in (0,\infty)$ such that 
\begin{align}
  \label{eqn:rejection}
  p(x) \leq c g(x),\quad \text{ for all } x\in \mathbb{R}^d.
\end{align}
Then, the \emph{general rejection algorithm} is given by:
\begin{enumerate}
  \item[1.] Generate a sample $Z \sim g(x)\diff{x}$.
  \item[2.] Generate a sample $Y\sim \mathcal{U}(0,c)$ independently from $Z$.
  \item[3.] Return the value of $Z$ if $Y\cdot g(Z)\leq p(Z)$, otherwise go
    back to Step 1. 
\end{enumerate}
It can be shown that the output of the algorithm is distributed
according to the density $p$. Moreover, the expected number of 
samples of $(Z,Y)$ needed until a value of $Z$ is accepted is equal to $c$.
It is therefore desirable to choose $c$ in \eqref{eqn:rejection}
as small as possible. 
For a proof we refer to \cite[Theorem~2.15]{madras2002}.

\subsection{Transformation to a reference triangle}
\label{sec:transformation}

In this subsection we describe how to generate a sample of a random variable
whose distribution depends on a specific triangle $T$ of a given triangulation
$\mathcal{T}_h$ by making use of a transformation to a reference triangle. 
The same approach is widely used in practice for the assembly of the stiffness
matrix \eqref{eq:matvec} and can therefore easily be added to existing code.

We purely focus on generating samples of the random variables $Y_{T,j}$, $T \in
\mathcal{T}_h$, $j \in \{1,\ldots,N_h\}$, introduced in
Section~\ref{sec:importanceSampling}. Recall that the 
probability density function associated to $Y_{T,j}$ is given by
\begin{align*}
  p_{T,j}(x) = 3 |T|^{-1} \varphi_j(x) \one_{T}(x), \quad x \in \D \subset \R^2.
\end{align*}
Let us fix a triangle $T \in \mathcal{T}_h$ with vertices $(x_1,y_1)$,
$(x_2,y_2)$ and $(x_3,y_3)$, such that $T \cap \mathrm{supp}(\varphi_j)\neq
\emptyset $. Without loss of generality we assume that $\varphi_j(x_1,y_1) =
1$.

We want to use the general rejection algorithm in order to generate samples 
of $Y_{T,j}$. However, the probability density function $p_{T,j}$
depends on the specific triangle and the basis function $\varphi_j$.
Since it is inconvenient to set up the rejection method for each element and
basis function separately, we will now describe in detail, how to simplify
this problem by using a so called isoparametric transformation denoted by
$\Gamma \colon T \to S_2$. Hereby, $S_2 \subset \R^2$ denotes the standard
$2$-simplex.   

\begin{figure}
  \includegraphics[height=5cm, trim={0.5cm 19cm 0.5cm
  2cm},clip]{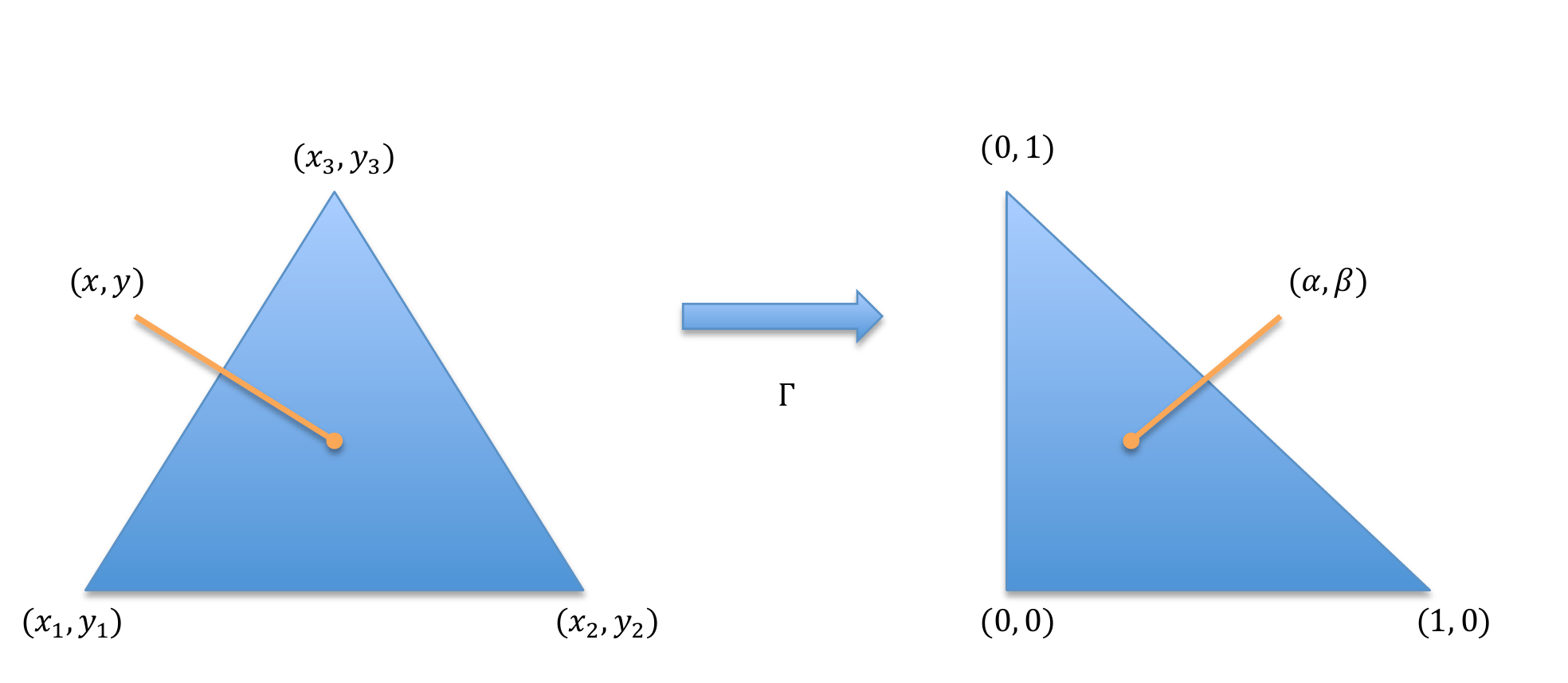} 
  \caption{Triangle transformation to the standard $2$-simplex, where $(x,y)$
  and $(\alpha,\beta) = \Gamma(x,y)$ represent interior points of the
  respective triangles.}
  \label{fig:1}
\end{figure}

As illustrated in Figure~\ref{fig:1} we denote the coordinates of a point in 
the given triangle $T$ by $(x,y)$, while
the ones in the standard $2$-simplex $S_2$ are written as $(\alpha,\beta)$.
Then, the coordinate transformation $\Gamma \colon T \to S_2$ is given by
\begin{align*}
  \begin{bmatrix}
    \alpha\\
    \beta
  \end{bmatrix}
  =\Gamma(x,y):=
  \begin{bmatrix}
    x_2-x_1 & x_3-x_1\\
    y_2-y_1 & y_3-y_1\\
  \end{bmatrix}^{-1}
  \begin{bmatrix}
    x-x_1\\
    y-y_1
  \end{bmatrix},
\end{align*}
while the inverse $\Gamma^{-1} \colon S_2 \to T$ is explicitly determined by
\begin{align}
  \label{eqn:inversetransformation}
  \begin{bmatrix}
    x\\
    y
  \end{bmatrix}
  = \Gamma^{-1}(\alpha,\beta)
  :=
  \begin{bmatrix}
    x_2-x_1 & x_3-x_1\\
    y_2-y_1 & y_3-y_1\\
  \end{bmatrix}
  \begin{bmatrix}
    \alpha\\
    \beta
  \end{bmatrix}
  +
  \begin{bmatrix}
    x_1\\
    y_1
  \end{bmatrix}.
\end{align}
Observe that $\Gamma^{-1}(0,0) = (x_1,y_1)$.

Next, we consider the mapping $\hat{\varphi} \colon S_2 \to \R$ defined by
\begin{align}\label{eq:varphi_transform1}
  \hat{\varphi}(\alpha,\beta) = 1 - \alpha - \beta, \quad \text{for all }
  (\alpha, \beta) \in S_2.
\end{align}
Since $\hat{\varphi}$ is affine linear one easily verifies that
\begin{align*}
  \hat{\varphi}(\alpha,\beta) = \varphi_j( \Gamma^{-1}(\alpha,\beta)), 
  \quad \text{for all } (\alpha, \beta) \in S_2.
\end{align*}
Moreover, it holds
\begin{align*}
  \int_{S_2} \hat{\varphi}(\alpha,\beta) \diff{(\alpha,\beta)} =
  \frac{1}{3}|S_2| = \frac{1}{6}.
\end{align*}
Therefore, the mapping $\hat{p} \colon \R^2 \to \R$ given by
\begin{align}
  \label{eq:phat}
  \hat{p}(\alpha,\beta) = 6 \hat{\varphi}(\alpha,\beta)
  \one_{S_2}(\alpha,\beta), \quad \text{for } (\alpha, \beta) \in \R^2,
\end{align}
is a probability density function. 
Suppose that $\hat{Y} \colon \Omega \to \R^2$ is a random variable with 
distribution $\hat{p}(\alpha,\beta)\diff{(\alpha,\beta)}$.
Then, it follows that 
\begin{align*}
  Y_{T,j} \sim \Gamma^{-1}( \hat{Y}),
\end{align*}
i.e. both random variables are identically distributed with
the probability density function $p_{T,j}$. In fact, for every $B \in \B(\R^2)$
it holds
\begin{align*}
  \P( \{ \Gamma^{-1}( \hat{Y}) \in B \} )
  = \P( \{ \hat{Y} \in \Gamma(B) \} )
  = \int_{\Gamma(B)} \hat{p}(\alpha,\beta) \diff{(\alpha,\beta)}.
\end{align*}
After inserting $\hat{p}$ and since $\Gamma(B) \cap S_2 = \Gamma(B \cap T)$
we arrive at
\begin{align*}
  \P( \{ \Gamma^{-1}( \hat{Y}) \in B \} )
  &= 6 \int_{\Gamma(B)} 
  \hat{\varphi}(\alpha,\beta) \one_{S_2}(\alpha,\beta)
  \diff{(\alpha,\beta)}
  = 6 \int_{\Gamma(B \cap T)} \hat{\varphi}(\alpha,\beta)
  \diff{(\alpha,\beta)}\\
  &= 6 \int_{B \cap T} \hat{\varphi}( \Gamma(x,y) ) |\det(D\Gamma)(x,y)|
  \diff{(x,y)}\\
  &= 6 \int_{B} \varphi_j( x,y) \one_T(x,y) |\det(D\Gamma)(x,y)|
  \diff{(x,y)}
\end{align*}
by a change of coordinates. 
Since $\Gamma$ is affine linear, the Jacobian $D \Gamma \in \R^{2,2}$ is
constant and the determinant is easily computed as 
\begin{align*}
  |\det(D\Gamma)| = |\det(D\Gamma^{-1})|^{-1} =  \frac{1}{2|T|}.
\end{align*}
Therefore,
\begin{align*}
  \P( \{ \Gamma^{-1}( \hat{Y}) \in B \} )
  = \frac{3}{|T|} \int_{B} \varphi_j( x,y) \one_T(x,y) \diff{(x,y)}
  = \int_B p_{T,j}(x,y) \diff{(x,y)}.
\end{align*}
Consequently, in order to generate a sample of the random variable
$Y_{T,j} \sim p_{T,j}$ it is sufficient to generate a sample of $\hat{Y} \sim
\hat{p}$ and to apply the transformation $\Gamma^{-1}$.

In addition, for the cases of $\varphi_j(x_2,y_2)=1$ or $\varphi_j(x_3,y_3)=1$,
if using the same triangle transform as illustrated in Figure \ref{fig:1}, the
only step that differs from the above description is in
\eqref{eq:varphi_transform1}. It needs to be changed accordingly to
\begin{align*}
  \hat{\varphi}(\alpha,\beta) = \alpha, \quad \text{for all }
  (\alpha, \beta) \in S_2,
\end{align*}
if $\varphi_j(x_2,y_2)=1$, or
\begin{align*}
  \hat{\varphi}(\alpha,\beta) = \beta, \quad \text{for all }
  (\alpha, \beta) \in S_2,
\end{align*}
in the case of $\varphi_j(x_3,y_3)=1$.

\subsection{Generating samples of $\hat{Y}$ on the reference triangle} 
\label{sec:samplingref}

It remains to discuss how to generate samples of the random variable
$\hat{Y} \sim \hat{p}(\alpha,\beta)\diff{(\alpha,\beta)}$ introduced in
\eqref{eq:phat}. To this end, we apply the general rejection algorithm from
Section~\ref{sec:rejection} with 
\begin{align*}
  g(\alpha,\beta)=2\one_{S_2}(\alpha,\beta), \quad \text{for } (\alpha,\beta)
  \in \R^2,  
\end{align*}
as the probability density function of the random variable $Z$, i.e. $Z \sim
\mathcal{U}(S_2)$. We recall that $S_2 \subset \R^2$ denotes the standard
$2$-simplex. We also define
\begin{align*}
  c:= \sup\Big\{\frac{\hat{p}(\alpha,\beta)}{g(\alpha,\beta)}\, 
  | \, (\alpha,\beta) \in S_2\Big\}
  = \sup\Big\{  \frac{1}{2} \hat{p}(\alpha,\beta) \, 
  | \, (\alpha,\beta) \in S_2\Big\} = 3.
\end{align*}
Then, \eqref{eqn:rejection} is satisfied. Therefore, the general rejection
algorithm is applicable and generates samples of $\hat{Y} \sim
\hat{p}(\alpha,\beta)\diff{(\alpha,\beta)}$ as follows: 
\begin{enumerate}
  \item[1.] Generate $Z = (Z_1,Z_2) \sim \mathcal{U}(S_2)$ as follows:
    \begin{enumerate}
      \item Generate $U_1, U_2 \sim \mathcal{U}(0,1)$ independently.
      \item If $U_1 + U_2 \le 1$ then set $Z := (U_1,U_2)$,
        else set $Z := (1-U_1, 1 -U_2)$.
    \end{enumerate}
  \item[2.] Generate $Y \sim \mathcal{U}(0,c)$ independently of $Z$.
  \item[3.] Output $Z = (Z_1,Z_2)$ if $Y g(Z_1,Z_2) \leq \hat{p}(Z_1,Z_2)$,
    else go back to Step 1. 
\end{enumerate}

\begin{remark}
  As an alternative to the rejection method 
  one could generate samples of $\hat{Y}= (\hat{Y}_1, \hat{Y}_2)$ by first
  applying the inversion method, cf.~\cite[Chapter~2]{madras2002}, for the
  simulation of the marginal distribution of the first variable $\hat{Y}_1$.
  Thereafter, a further application of the inversion method can be used
  to generate a sample of $\hat{Y}_2$ conditional on the already generated
  sample of $\hat{Y}_1$. Depending on the actual implementation, this could be
  more efficient. However, this approach is much harder to
  generalize to other probability density functions or to higher dimensional
  domains.
\end{remark}

\subsection{Generating uniformly distributed samples on arbitrary elements}
\label{sec:uniform}

In this subsection, we briefly discuss the generation of uniformly distributed 
random variables $Z_T \sim \mathcal{U}(T)$ for an arbitrary triangle $T \in
\mathcal{T}_h$. These random variables are required for the randomized
quadrature formula \eqref{eq:MC}.
This is easily accomplished by making use of the results from
the previous two subsections. Indeed, we just have to generate 
a sample of a uniformly distributed random variable $Z \sim \mathcal{U}(S_2)$,
where $S_2$ again denotes the $2$-simplex. Then, we apply the corresponding 
inverse transformation $\Gamma^{-1}$ from \eqref{eqn:inversetransformation}
associated to the given triangle $T \in \mathcal{T}_h$.
As a result, we obtain $Z_T = \Gamma^{-1}(Z) \sim \mathcal{U}(T)$ for $T \in
\mathcal{T}_h$.

The sampling procedure is summarized in the following two steps.
\begin{enumerate}
  \item[1.] Generate $Z = (Z_1,Z_2) \sim \mathcal{U}(S_2)$ as follows:
    \begin{enumerate}
      \item Generate $U_1, U_2 \sim \mathcal{U}(0,1)$ independently.
      \item If $U_1 + U_2 \le 1$ then set $Z := (U_1,U_2)$,
        else set $Z := (1-U_1, 1 -U_2)$.
    \end{enumerate}
  \item[2.] Output: $Z_T = \Gamma^{-1}(Z_1,Z_2)$, where $\Gamma^{-1}$ in
    \eqref{eqn:inversetransformation} uses the coordinates of the vertices of
    $T$.
\end{enumerate}

\subsection{Implementation of the FEM with randomized quadrature formulas}
\label{sec:FEM}
In this part, we illustrate the implementation of the finite element method
with the randomized quadrature formula \eqref{eq:MC} for the 
elliptic equation \eqref{eq:BVP}. The implementation of \eqref{eq:discProbis}, 
which is based on the importance sampling estimator, can be done in a similar
way.

Algorithm~\ref{alg1} lists one possibility to compute a realization of the
numerical approximation of the solution to \eqref{eq:BVP}
based on the Monte Carlo estimator \eqref{eq:MC}.  

\begin{algorithm}
  \caption{FEM with MC estimator \eqref{eq:MC} for the elliptic equation
  \eqref{eq:BVP}}
  \label{alg1}
  \begin{algorithmic}[1] 
    \State \textbf{Input}: 
    $\mathcal{T}_h$ triangulation of domain $\D$,
    functions $f$ and $\sigma$;
    \State Get the set of interior nodes $(z_j)_{j=1}^{N_h}$ of $\mathcal{T}_h$
    with associated Lagrange basis functions $(\varphi_j)_{j=1}^{N_h}$; 
    \State Generate $Z_T^{1},Z^{2}_T \sim \mathcal{U}(T)$ independently
    for every $T \in \mathcal{T}_h$ (see Section~\ref{sec:uniform});
    \State Compute the function values $(\sigma(Z_T^{1}))_{T \in
    \mathcal{T}_h}$ and $(f(Z_T^{2}))_{T \in \mathcal{T}_h}$;
    \State Assemble the stiffness matrix $A_{MC}$ with entries
    $\big(a_{MC}(\varphi_{k_1},\varphi_{k_2})\big)_{k_1,k_2=1}^{N_h}$ 
    based on the values $(Z_T^1)_{T\in \mathcal{T}_h}$ and
    $(\sigma(Z_T^{1}))_{T \in \mathcal{T}_h}$ as in \eqref{eq:aMC}; 
    \State Assemble the load vector $F_{MC}$ with entries
    $\big(F_{MC}(\varphi_{k})\big)_{k=1}^{N_h}$
    based on the values $(Z_T^2)_{T\in \mathcal{T}_h}$ and $(f(Z_T^{2}))_{T \in
    \mathcal{T}_h}$ as in \eqref{eq:FMC};
    \State Solve the linear equation $A_{MC} u_h^{MC}= F_{MC}$ to obtain
    $u_h^{MC}$; 
    \State \textbf{Output}: One realization of $u_h^{MC}$.  
  \end{algorithmic}
\end{algorithm}

Observe in Step~5 that one only has to sum over those triangles in
\eqref{eq:aMC}, which are contained in the joint support of the 
basis functions $\varphi_{k_1},\varphi_{k_2}$. Hence, the sum in \eqref{eq:aMC}
consists of at most two non-zero terms if $k_1\neq k_2$. 
In particular, the stiffness matrix $A_{MC}$ remains sparse and the complexity
of assembling $A_{MC}$ grows only linearly with $N_h$. In addition, the matrix
$A_h$ remains positive definite and allows the application of linear solvers
for large sparse systems as described in, e.g., \cite{hackbusch2016}.

\section{Numerical experiments}
\label{sec:numexp}

This section is devoted to some numerical experiments, which illustrate the
performance of the randomized quadrature formulas based on the 
MC estimator \eqref{eq:MC} and the IS estimator \eqref{eq:FIS}. 
To this end, we consider the Poisson equation \eqref{eq:Poisson} on
the domain $\mathcal{D}= (0, 1)^2 \subset \R^2$ with homogeneous Dirichlet
boundary conditions. In our experiments, we choose two different forcing terms:
The first is singular but still square-integrable. It is defined by 
\begin{align}
  \label{eqn:fterm}
  f_1(x,y):=|x-y|^{-q} + 10\sin(2^3 \pi x) \text{sgn}(2y-x), \quad \text{for }
  (x,y) \in \D,
\end{align}
with $q=0.49$ and $\mathrm{sgn} \colon \R \to \R$ given by
\begin{align*}
  \mbox{sgn}(x):=
  \begin{split}
    \begin{cases}
      -1,& \ \mbox{if\ }x<0,\\
       0,& \ \mbox{if\ }x=0,\\      
       1,& \ \mbox{if\ }x>0.
    \end{cases}
  \end{split}
\end{align*}
The second forcing term $f_2 \colon \D \to \R$ is taken more regular by
setting
\begin{align}
  \label{eqn:fterm2}
  f_2(x,y) := 8 x(1-x)y(1-y), \quad \text{for } (x,y) \in \D.
\end{align}
In fact, it can be easily verified that $f_2 \in H^1_0(\D) \cap H^2(\D)$.

For the finite element method we choose a family 
of structured uniform meshes. To be more precise, the domain $\D$ is first
subdivided into squares with uniform mesh size $h = 2^{-n}$, $n\in
\{2,\ldots,8\}$. Then, we obtain the triangulation $\mathcal{T}_h$ by bisecting
each square along the diagonal from the  upper left to the lower right vertex.
As in the previous sections, the shape functions are chosen 
to be piecewise linear. 
For each fixed triangulation $\mathcal{T}_h$ we then solve the discrete
problems \eqref{eq:discProb} and \eqref{eq:discProbis} as sketched 
in Algorithm~\ref{alg1}. As above, we denote the corresponding 
discrete solutions by $u^{MC}_h$ and $u^{IS}_h$, respectively.

To compare the performance of the two randomized quadrature formulas, 
we focus on the distances between the discrete solutions $u^{MC}_h$ and
$u^{IS}_h$ and the standard finite element solution $u_h = R_h u$, which
satisfies \eqref{eq:Galerkin} with $\sigma \equiv 1$ and the load vector $f_h$
defined in \eqref{eq:load}. 
This allows us to neglect the approximation error $u_h - u$ stemming from the
finite element method itself. More precisely, if the randomized
quadrature formulas are able to produce the exact values, e.g. $F_{IS}(\varphi_j) = [f_h]_j$ for every $j
\in \{1,\ldots,N_h\}$, then we immediately obtain $u_h^{IS} = u_h$. 

In the following, we therefore compute Monte Carlo approximations of the errors
$\|u^{MC}_h-u_h\|_{L^2(\Omega;H^1_0(\D))}$ and
$\|u^{IS}_h-u_h\|_{L^2(\Omega;H^1_0(\D))}$. This is achieved by
generating $M = 10^4$ independent realizations of the random variables
$u^{MC}_h$ and $u^{IS}_h$ and taking suitable averages. 
More precisely, we first take note of the fact that
$\E[u_h^{MC}] = \E[u_h^{IS}] = u_h$.
In fact, since $\sigma \equiv 1$ we have that 
$a_{MC} = a$ in \eqref{eq:discProb}. Hence, after 
taking expectation in \eqref{eq:discProb} and since $Q_{MC}$ is unbiased
we obtain that
\begin{align*}
  a\big( \E[ u_h^{MC} ], v_h \big)
  = \E\big[ a_{MC}(u_h^{MC}, v_h) \big] 
  = \E \big[ F_{MC}(v_h) \big]
  = F(v_h)
\end{align*}
for every $v_h \in S_h$. Therefore, the function $\E[u_h^{MC}] \in S_h$ is a
solution to \eqref{eq:Galerkin}, i.e. $\E[u_h^{MC}] = u_h$ for every $h \in
(0,1]$. The same arguments apply to $\E[u_h^{IS}]$. 

This motivates to replace $u_h$ in the error computation
by the Monte Carlo means 
\begin{align*}
  u_h \approx \frac{1}{M} \sum_{i = 1}^M u_{h,i}^{MC}, \quad \text{ and }
  u_h \approx \frac{1}{M} \sum_{i = 1}^M u_{h,i}^{IS},
\end{align*}
where $(u_{h,i}^{MC})_{i = 1}^M$ and $(u_{h,i}^{IS})_{i = 1}^M$ denote
families of independent and identically distributed copies of $u_h^{MC}$ and
$u_h^{IS}$, respectively.

The error based on the MC estimator with respect to the
$L^2(\Omega;H^1_0(\D))$-norm is then approximated by
\begin{align*}
  \|u^{MC}_h-u_h\|_{L^2(\Omega;H^1_0(\D))}^2
  &= \E \big[ | u_h^{MC} - \E[u_h^{MC}] |^2_{H^1(\D)} \big]\\
  &\approx \frac{1}{M-1} \sum_{i = 1}^{M} 
  \Big| u_{h,i}^{MC} - \frac{1}{M} \sum_{j = 1}^M
  u_{h,j}^{MC} \Big|^2_{H^1(\D)}.
\end{align*}
Observe that the estimator on the right-hand side coincides with
the empirical variance of the $S_h$-valued random samples $(u_{h,i}^{MC})_{i =
1}^M$. 

\begin{figure}[t]
  \begin{center}
    \subfigure[a][Errors of MC estimator with $f_1$.]{
      \includegraphics[width=0.47\textwidth]{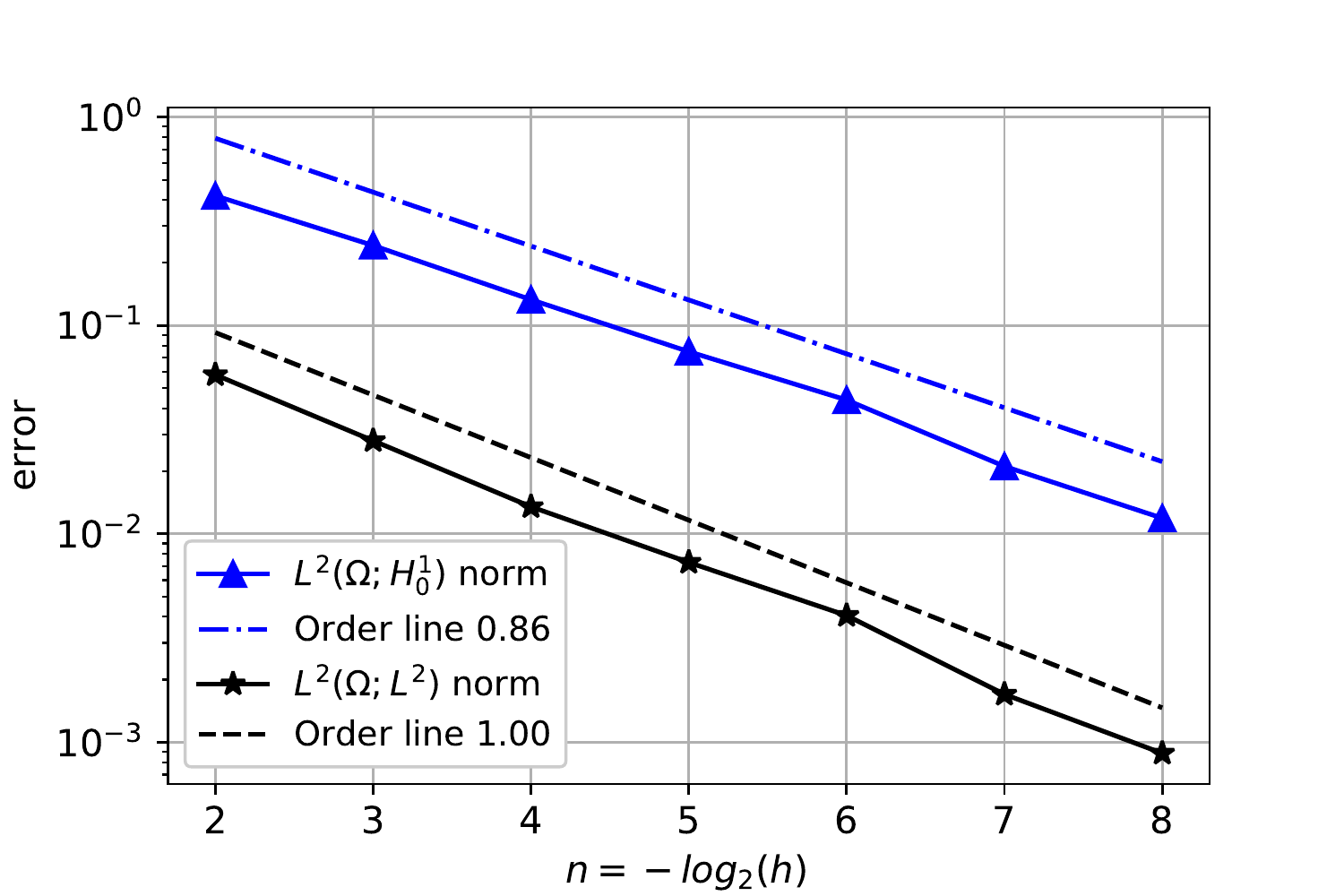}
    }
    \subfigure[b][Errors of MC estimator with $f_2$.]{
      \includegraphics[width=0.47\textwidth]{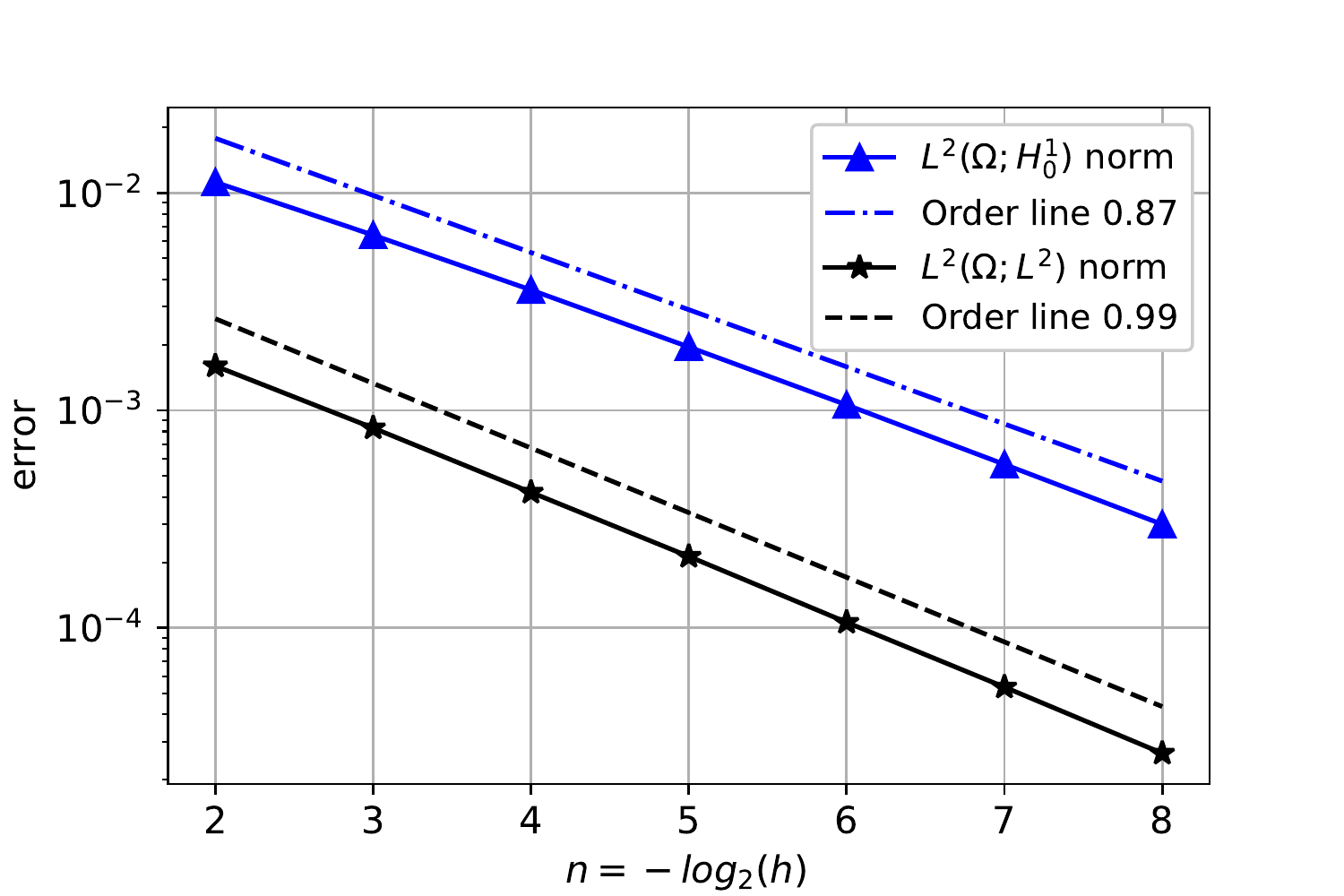}
    }
    \subfigure[c][Errors of IS estimator with $f_1$.]{
      \includegraphics[width=0.47\textwidth]{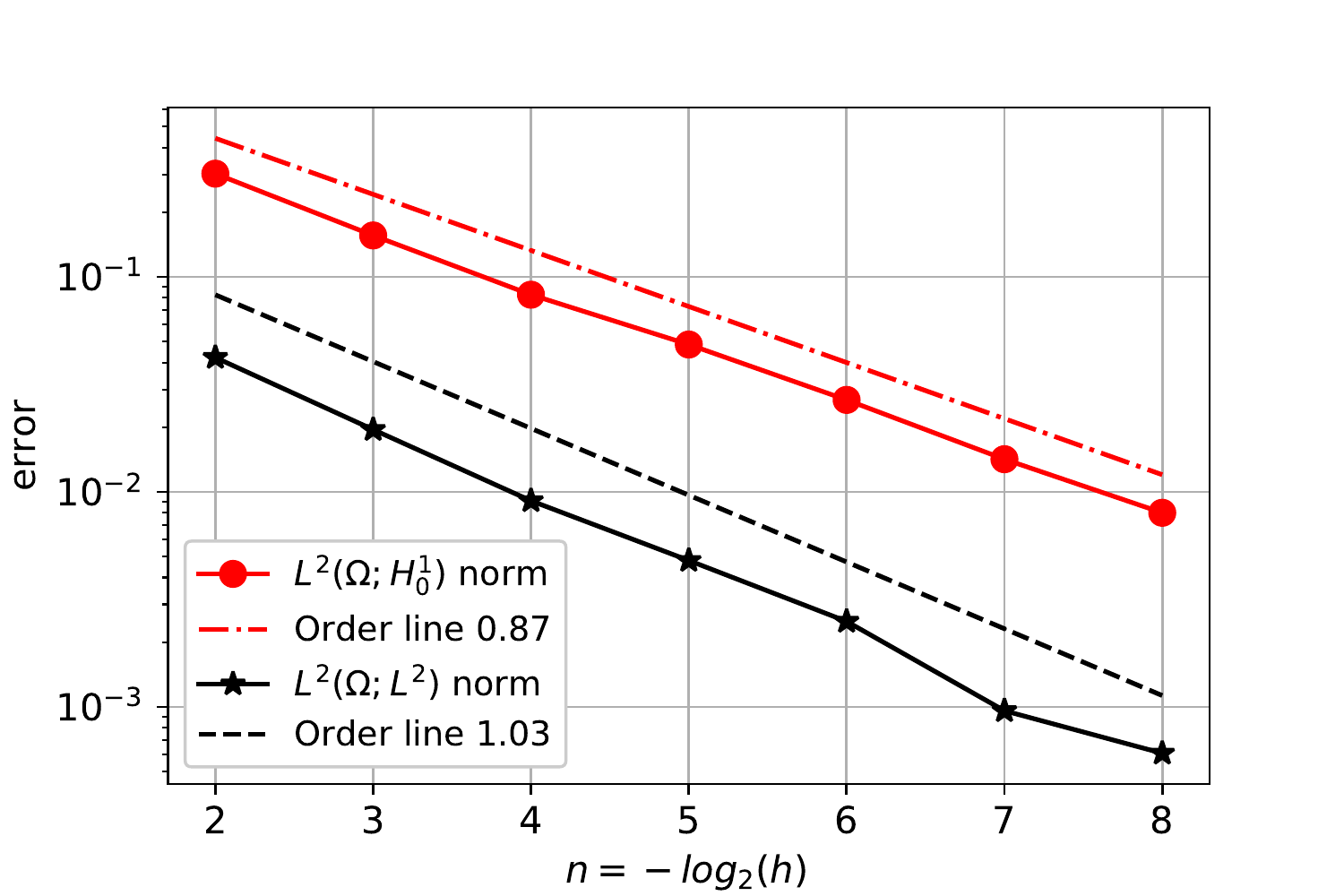}
    }
    \subfigure[d][Errors of IS estimator with $f_2$.]{
      \includegraphics[width=0.47\textwidth]{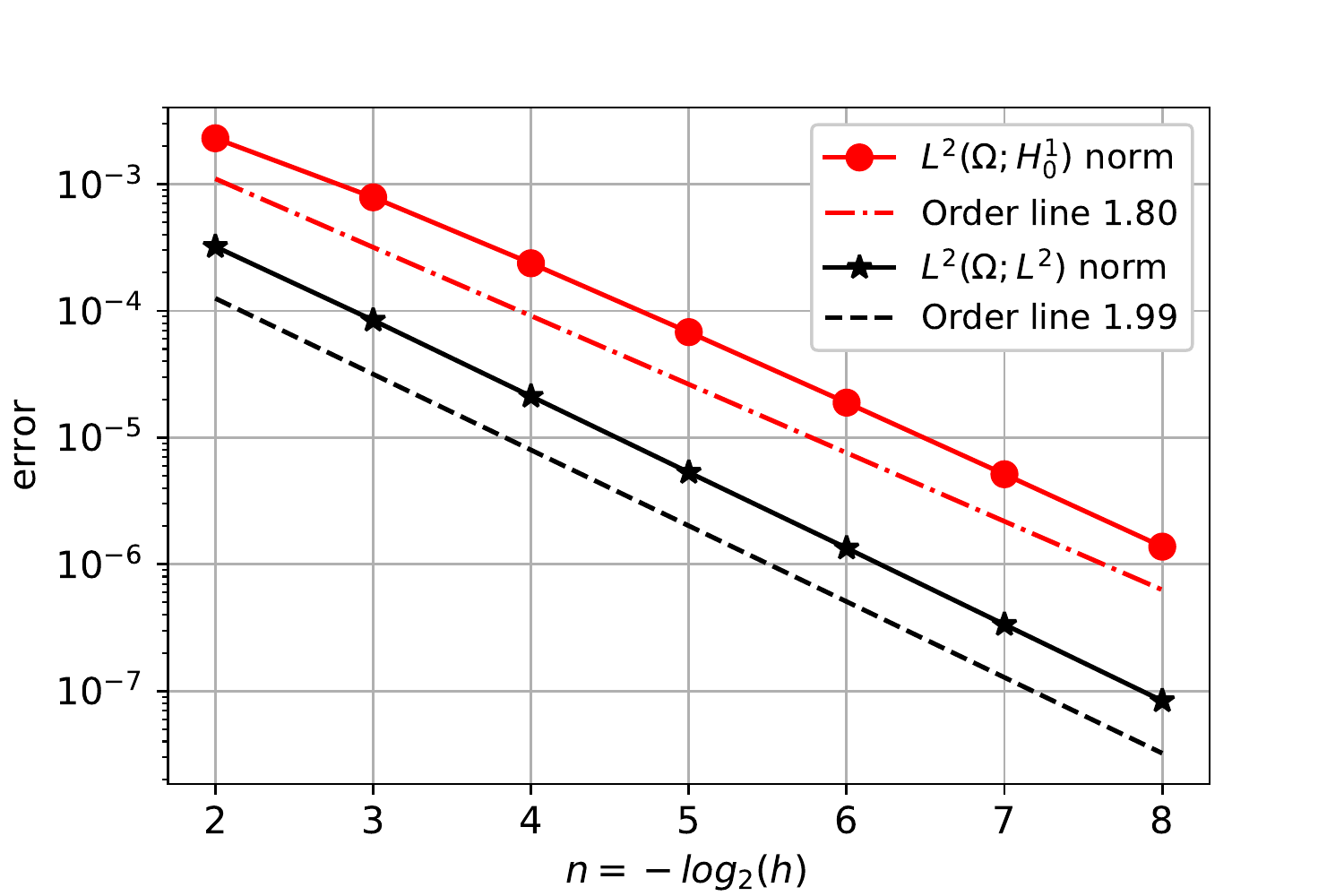}
    }
  \end{center}
  \caption{Error plots of the MC estimator \eqref{eq:MC} and IS estimator
  \eqref{eq:FIS} for the Poisson equation \eqref{eq:Poisson} with singular
  forcing term $f_1$ and smooth forcing term $f_2$.}
  \label{fig:MCvsIS}
\end{figure}

The approximation of $\|u^{IS}_h-u_h\|_{L^2(\Omega;H^1_0(\D))}$
is done in the same way. Further, we recall that the computation of the
$H^1(\D)$-semi-norm is easily accomplished in practice 
by making use of the relationship 
\begin{align}
  \label{eqn:approxHnorm1}
  |v_h|^2_{H^1(\D)} 
  = a(v_h,v_h) 
  = \sum_{i,j = 1}^{N_h} v_i v_j a( \varphi_i, \varphi_j)
  = \mathbf{v}^\top A_h \mathbf{v},
\end{align}
for every $v_h = \sum_{j = 1}^{N_h} v_j \varphi_j \in S_h$ with 
$\mathbf{v} = [v_1,\ldots,v_{N_h}]^{\top} \in \R^{N_h}$.
If the stiffness matrix $A_h$ is replaced by the mass matrix 
$M_h = [(\varphi_i,\varphi_j)_{L^2(\D)}]_{i,j=1}^{N_h}$
in \eqref{eqn:approxHnorm1}, then we also obtain
an approximation of the $L^2(\Omega;L^2(\D))$-norm.

Figure~\ref{fig:MCvsIS} shows the results of these experiments. 
In each of the four subfigures the Monte Carlo approximations of
the $L^2(\Omega;H^1_0(\D))$-norm and the $L^2(\Omega;L^2(\D))$-norm
of the errors $u_h^{MC} - u_h$ and $u_h^{IS} - u_h$ are plotted versus the mesh
size $h = 2^{-n}$, $n \in \{2,\ldots,8\}$. 
Hereby, the first two subfigures show the corresponding errors for the 
MC estimator \eqref{eq:MC} applied to the Poisson equation with the forcing
terms $f_1$ and $f_2$ defined in \eqref{eqn:fterm} and \eqref{eqn:fterm2},
respectively. As it can be seen from the order lines, the errors decay
approximately with orders roughly $0.86$ and $1$.
Given that $f_1$ is singular and only square-integrable, the experimental order
of convergence is therefore larger than it is predicted by
Theorem~\ref{thm:errorH1}.  

In Figures~\ref{fig:MCvsIS}~{\bf(c)} and {\bf(d)} we see the corresponding
results for the IS estimator \eqref{eq:FIS}. While the
values in Figure~\ref{fig:MCvsIS}~{\bf(c)} are comparable to those in 
Figure~\ref{fig:MCvsIS}~{\bf(a)}, it can be seen from
Figure~\ref{fig:MCvsIS}~{\bf(d)} that the IS estimator benefits 
considerably from the additional smoothness of $f_2$. In fact, the experimental
order of convergence is close to $2$ in Figure~\ref{fig:MCvsIS}~{\bf(d)}, which
is in line with the results in Theorem~\ref{thm:errorISL2}. 

\begin{figure}[t]
  \begin{center}
    \subfigure[a][Comparison for $f_1$.]{
      \includegraphics[width=0.47\textwidth]{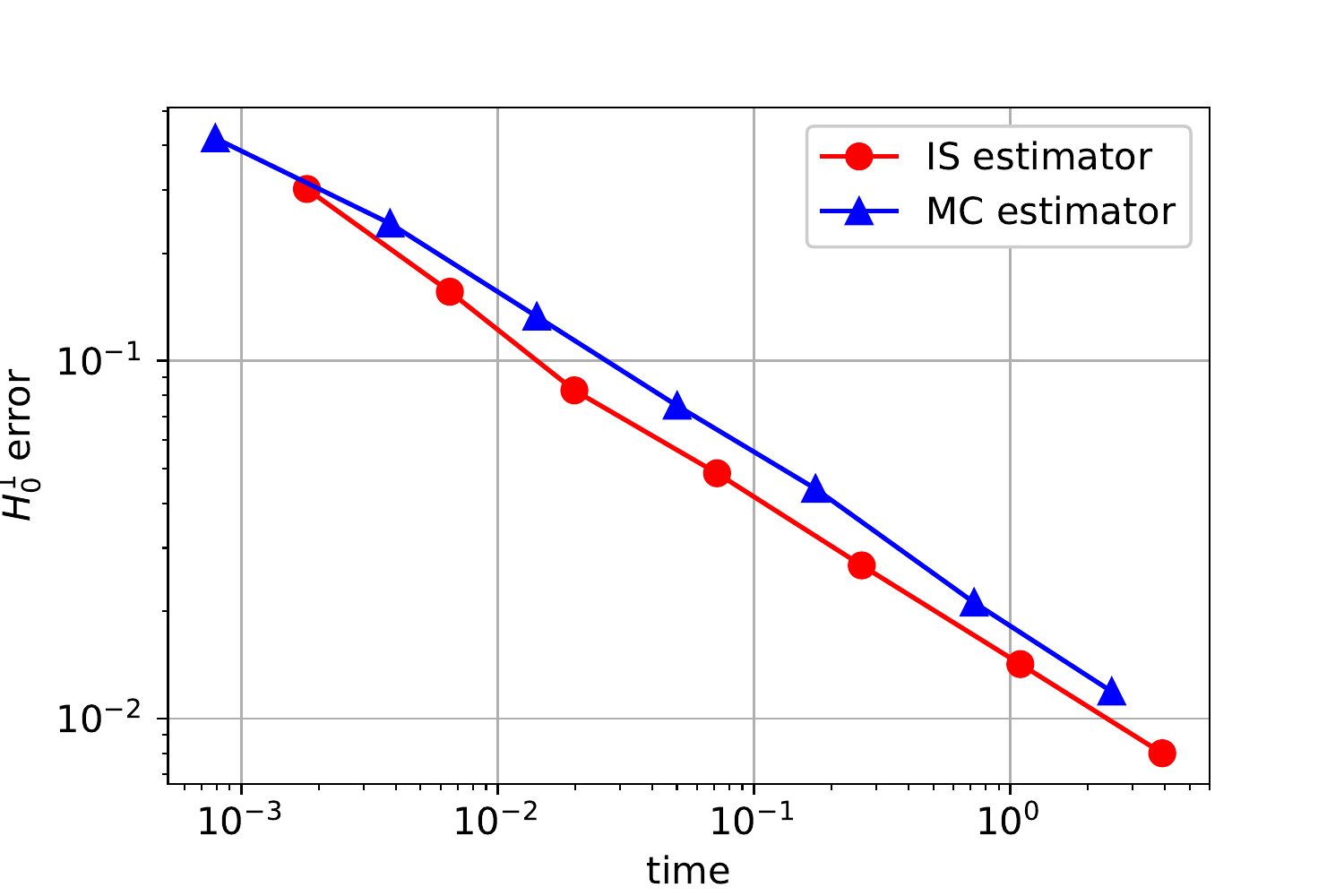}
    }
    \subfigure[b][Comparison for $f_2$.]{
      \includegraphics[width=0.47\textwidth]{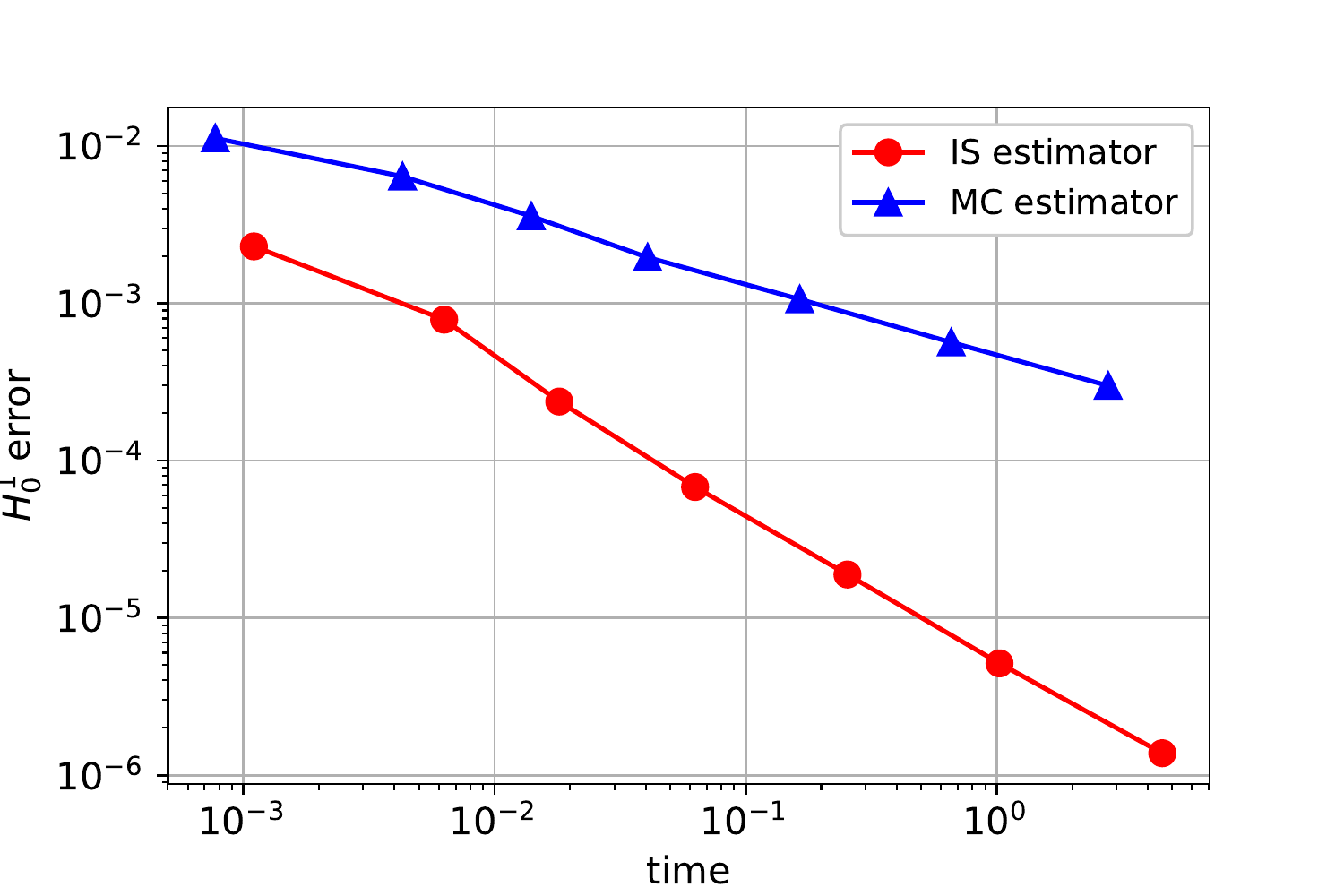}
    }
  \end{center}
  \caption{Computational time versus errors in $L^2(\Omega;H^1_0(\D))$-norm 
      of the MC estimator \eqref{eq:MC} and IS estimator 
      \eqref{eq:FIS} with singular forcing term $f_1$ and smooth forcing term
      $f_2$.} 
  \label{fig:time}
\end{figure}

In Figure~\ref{fig:time}, we plot the estimated values of the errors in
the $L^2(\Omega;H^1_0(\D))$-norm versus the computational time. 
This allows a better comparison of the performance of the two randomized
quadrature rules since the IS estimator is computational more expensive due to
the application of the general rejection method. 
Hereby, the computational time is taken as the average time needed to assemble
the load vector $f_h \in \R^{N_h}$ for $f_1$ or $f_2$ with either \eqref{eq:MC}
or \eqref{eq:FIS}. 
More precisely, we only measured the time of Step~6 in Algorithm~\ref{alg1}. 
The other steps are neglected, since they are essentially independent of the
choice of the randomized quadrature formula. 

As it can be seen in both subfigures, the importance sampling estimator
\eqref{eq:FIS} is superior to the MC estimator. For both forcing
terms the higher computational cost is offset by the better accuracy of the IS
estimator \eqref{eq:FIS}. In particular, this is true for the smooth forcing
term $f_2$ due to the better experimental order of convergence of
\eqref{eq:FIS}. On the other hand, it is not very pronounced for 
the singular forcing term $f_1$ as can be seen in
Figure~\ref{fig:time}~\textbf{(a)}. 

Finally, let us also briefly compare the performance of the randomized
quadrature formula with the deterministic \emph{barycentric quadrature rule},
which is also known as a one-point Gaussian quadrature formula. 
We refer to \cite{jin2015} and \cite[Section~5.6]{larsson2009}.
Table~\ref{tab1} lists the corresponding estimates of the errors 
stemming from the application of the deterministic quadrature rule.
Hereby, the errors are measured with respect to the semi-norm in
$H^1(\D)$. Apparently, the \emph{barycentric quadrature rule}
is not useful for approximating the load vector involving the
singular forcing term $f_1$. 

This is easily explained by the geometry of the triangulation $\mathcal{T}_h$.
For every mesh size $h = 2^{-n}$ there always exist triangles in 
$\mathcal{T}_h$ whose barycenters lie on the diagonal in $\D$, where $f_1$ is
singular. To avoid NaN entries in the load vector we replaced $f_1$ by the
modification 
\begin{align*}
  \tilde{f}_1(x,y) := (\mathrm{eps} + |x-y|)^{-q} + 10\sin(2^3 \pi x)
  \text{sgn}(2y-x), \quad \text{for } 
  (x,y) \in \D,  
\end{align*}
where $\mathrm{eps}$ is equal to the machine precision (in
\textsc{Matlab}\textsuperscript{\textcopyright} 
$\mathrm{eps} \approx 2.2204 \times 10^{-16}$). Nevertheless, the
discretization errors indicate that the \emph{barycentric quadrature rule}
is not reliable for applications with singular forcing terms.
This can only be circumvented by adapting the mesh to
avoid point evaluations close to singularities of the given forcing term. 
However, this requires a priori knowledge of the position of the singularities
or adaptive methods for their automatic detection when generating the mesh. The
randomized quadrature formulas, on the other hand, lead to a robustification of
the finite element method based on rudimentary uniform meshes without using any
preknowledge of the forcing term.

\begin{table}[t]
  \begin{center}
    \caption{Discretization errors of the (deterministic) barycentric
    quadrature rule applied to \eqref{eq:Poisson} with $f_1$.} 
    \label{tab1}
    \begin{tabular}{c|cccccc}
      \hline  
      \hline
      mesh size $h^{-n}$ & $n=3$ & $n=4$ & $n=5$ & $n=6$ & $n=7$ & $n=8$\\
      \hline
      error in $H^1_0(\D)$-norm
      & 1.4e+6 &7.7e+5 & 4.0e+5 &  2.1e+5 &1.0e+5 & 5.2e+4\\
      \hline
      \hline
    \end{tabular}
  \end{center}
\end{table}

\section*{Acknowledgement}

The authors like to thank Monika Eisenmann for helpful comments.
RK also gratefully acknowledges financial support by the German Research
Foundation through the research unit FOR 2402 -- Rough paths, stochastic
partial differential equations and related topics -- at TU Berlin. NP and YW
are grateful to EPSRC for funding this work through the project EP/R041431/1,
titled `Randomness: a resource for real-time analytics'. NP acknowledges
additional support from the Alan Turing Institute. YW would acknowledge EPSRC
project EP/S026347/1, titled 'Unparameterised multi-modal data, high order
signatures, and the mathematics of data science', as well as Alan Turing
Institute, for travel support.


\begin{thebibliography}{10}

\bibitem{adams2003}
R.~A. Adams and J.~J.~F. Fournier.
\newblock {\em Sobolev Spaces}, volume 140 of {\em Pure and Applied
  Mathematics}.
\newblock Elsevier/Academic Press, Amsterdam, second edition, 2003.

\bibitem{barth2018}
A.~Barth and A.~Stein.
\newblock A study of elliptic partial differential equations with jump
  diffusion coefficients.
\newblock {\em SIAM/ASA J. Uncertain. Quantif.}, 6(4):1707--1743, 2018.

\bibitem{bauer2001}
H.~Bauer.
\newblock {\em Measure and Integration Theory}, volume~26 of {\em de Gruyter
  Studies in Mathematics}.
\newblock Walter de Gruyter \& Co., Berlin, 2001.
\newblock Translated from the German by Robert B. Burckel.

\bibitem{brenner2008}
S.~C. Brenner and L.~R. Scott.
\newblock {\em The Mathematical Theory of Finite Element Methods}, volume~15 of
  {\em Texts in Applied Mathematics}.
\newblock Springer, New York, third edition, 2008.

\bibitem{brezis2011}
H.~Brezis.
\newblock {\em Functional Analysis, {S}obolev Spaces and Partial Differential
  Equations}.
\newblock Universitext. Springer, New York, 2011.

\bibitem{cambanis1992}
S.~Cambanis and E.~Masry.
\newblock Trapezoidal stratified {M}onte {C}arlo integration.
\newblock {\em SIAM J. Numer. Anal.}, 29(1):284--301, 1992.

\bibitem{cohn2013}
D.~L. Cohn.
\newblock {\em Measure Theory}.
\newblock Birkh\"{a}user Advanced Texts: Basler Lehrb\"{u}cher. Birkh\"{a}user/
  Springer, New York, second edition, 2013.

\bibitem{daun2011}
T.~Daun.
\newblock On the randomized solution of initial value problems.
\newblock {\em J. Complexity}, 27(3-4):300--311, 2011.

\bibitem{dinezza2012}
E.~Di~Nezza, G.~Palatucci, and E.~Valdinoci.
\newblock Hitchhiker's guide to the fractional {S}obolev spaces.
\newblock {\em Bull. Sci. Math.}, 136(5):521--573, 2012.

\bibitem{eisenmann2017}
M.~Eisenmann, M.~Kov{\'a}cs, R.~Kruse, and S.~Larsson.
\newblock On a randomized backward {E}uler method for nonlinear evolution
  equations with time-irregular coefficients.
\newblock {\em Found. Comput. Math.}, 2019.
\newblock (Online first).

\bibitem{evans2010}
L.~C. Evans.
\newblock {\em Partial Differential Equations}, volume~19 of {\em Graduate
  Studies in Mathematics}.
\newblock American Mathematical Society, Providence, RI, second edition, 2010.

\bibitem{evans2000}
M.~Evans and T.~Swartz.
\newblock {\em Approximating Integrals via {M}onte {C}arlo and Deterministic
  Methods}.
\newblock Oxford Statistical Science Series. Oxford University Press, Oxford,
  2000.

\bibitem{grisvard2011}
P.~Grisvard.
\newblock {\em Elliptic Problems in Nonsmooth Domains}, volume~69 of {\em
  Classics in Applied Mathematics}.
\newblock Society for Industrial and Applied Mathematics (SIAM), Philadelphia,
  PA, 2011.
\newblock Reprint of the 1985 original, With a foreword by S. C. Brenner.

\bibitem{haber1966}
S.~Haber.
\newblock A modified {M}onte-{C}arlo quadrature.
\newblock {\em Math. Comp.}, 20:361--368, 1966.

\bibitem{haber1967}
S.~Haber.
\newblock A modified {M}onte-{C}arlo quadrature. {II}.
\newblock {\em Math. Comp.}, 21:388--397, 1967.

\bibitem{haber1969}
S.~Haber.
\newblock Stochastic quadrature formulas.
\newblock {\em Math. Comp.}, 23:751--764, 1969.

\bibitem{hackbusch2016}
W.~Hackbusch.
\newblock {\em Iterative Solution of Large Sparse Systems of Equations},
  volume~95 of {\em Applied Mathematical Sciences}.
\newblock Springer-Verlag, Cham, second edition, 2016.

\bibitem{heinrich2006}
S.~Heinrich.
\newblock The randomized information complexity of elliptic {PDE}.
\newblock {\em J. Complexity}, 22(2):220--249, 2006.

\bibitem{heinrich2008}
S.~Heinrich and B.~Milla.
\newblock The randomized complexity of initial value problems.
\newblock {\em J. Complexity}, 24(2):77--88, 2008.

\bibitem{hofmanova2017}
M.~Hofmanov\'a, M.~Kn\"{o}ller, and K.~Schratz.
\newblock Stratified exponential integrator for modulated nonlinear
  {S}chr\"odinger equations.
\newblock {\em ArXiv preprint, arXiv:1711.01091}, 2017.

\bibitem{jentzen2009}
A.~Jentzen and A.~Neuenkirch.
\newblock A random {E}uler scheme for {C}arath\'eodory differential equations.
\newblock {\em J. Comput. Appl. Math.}, 224(1):346--359, 2009.

\bibitem{jin2015}
J.-M. Jin.
\newblock {\em The Finite Element Method in Electromagnetics}.
\newblock John Wiley \& Sons, New York, third edition, 2015.

\bibitem{kallenberg2002}
O.~Kallenberg.
\newblock {\em Foundations of Modern Probability}.
\newblock Probability and its Applications. Springer-Verlag, New York, second
  edition, 2002.

\bibitem{klenke2014}
A.~Klenke.
\newblock {\em Probability Theory: A Comprehensive Course}.
\newblock Universitext. Springer, London, second edition, 2014.

\bibitem{kruse2017}
R.~Kruse and Y.~Wu.
\newblock Error analysis of randomized {Runge--Kutta} methods for differential
  equations with time-irregular coefficients.
\newblock {\em Comput. Methods Appl. Math.}, 17(3):479--498, 2017.

\bibitem{kruse2017b}
R.~Kruse and Y.~Wu.
\newblock A randomized {M}ilstein method for stochastic differential equations
  with non-differentiable drift coefficients.
\newblock {\em Discrete Contin. Dyn. Syst. Ser. B}, 24(8):3475--3502, 2019.

\bibitem{larson2013}
M.~G. Larson and F.~Bengzon.
\newblock {\em The Finite Element Method: Theory, Implementation, and
  Applications}, volume~10 of {\em Texts in Computational Science and
  Engineering}.
\newblock Springer, Heidelberg, 2013.

\bibitem{larsson2009}
S.~Larsson and V.~Thom\'ee.
\newblock {\em Partial Differential Equations with Numerical Methods},
  volume~45 of {\em Texts in Applied Mathematics}.
\newblock Springer-Verlag, Berlin, 2009.
\newblock Paperback reprint of the 2003 edition.

\bibitem{madras2002}
N.~Madras.
\newblock {\em Lectures on {M}onte {C}arlo Methods}, volume~16 of {\em Fields
  Institute Monographs}.
\newblock American Mathematical Society, Providence, RI, 2002.

\bibitem{masry1990}
E.~Masry and S.~Cambanis.
\newblock Trapezoidal {M}onte {C}arlo integration.
\newblock {\em SIAM J. Numer. Anal.}, 27(1):225--246, 1990.

\bibitem{mueller2012}
T.~M{\"u}ller-Gronbach, E.~Novak, and K.~Ritter.
\newblock {\em Monte {C}arlo-{A}lgorithmen}.
\newblock Springer-Lehrbuch. Springer-Verlag, Heidelberg, 2012.

\bibitem{przybylowicz2014}
P.~Przyby{\l}owicz and P.~Morkisz.
\newblock Strong approximation of solutions of stochastic differential
  equations with time-irregular coefficients via randomized {E}uler algorithm.
\newblock {\em Appl. Numer. Math.}, 78:80--94, 2014.

\bibitem{roubicek2013}
T.~Roub\'{i}\v{c}ek.
\newblock {\em Nonlinear {P}artial {D}ifferential {E}quations with
  {A}pplications}, volume 153 of {\em International Series of Numerical
  Mathematics}.
\newblock Birkh\"{a}user/Springer Basel AG, Basel, second edition, 2013.

\bibitem{stengle1990}
G.~Stengle.
\newblock Numerical methods for systems with measurable coefficients.
\newblock {\em Appl. Math. Lett.}, 3(4):25--29, 1990.

\bibitem{stengle1995}
G.~Stengle.
\newblock Error analysis of a randomized numerical method.
\newblock {\em Numer. Math.}, 70(1):119--128, 1995.

\bibitem{strang1973}
G.~Strang and G.~J. Fix.
\newblock {\em An Analysis of the Finite Element Method}.
\newblock Prentice-Hall Series in Automatic Computation. Prentice-Hall, Inc.,
  Englewood Cliffs, N. J., 1973.

\bibitem{thomee2006}
V.~Thom\'ee.
\newblock {\em Galerkin Finite Element Methods for Parabolic Problems},
  volume~25 of {\em Springer Series in Computational Mathematics}.
\newblock Springer-Verlag, Berlin, second edition, 2006.

\bibitem{weitzenboeck1919}
R.~Weitzenb\"{o}ck.
\newblock \"{U}ber eine {U}ngleichung in der {D}reieckgsgeometrie.
\newblock {\em Math. Z.}, 5(1-2):137--146, 1919.

\end{thebibliography}

\end{document}